\documentclass[12pt,leqno,twoside]{amsart}

\usepackage{amssymb,amsmath,amsthm,soul,color}

\usepackage[noadjust]{cite}

\usepackage{t1enc}

\usepackage[utf8]{inputenc}
\usepackage{indentfirst}

\usepackage{todonotes}

\usepackage[left=2.5cm, right=2.5cm, top=2cm, bottom=2cm]{geometry}

\usepackage{url}

\usepackage{mathrsfs}

\usepackage{enumitem,graphicx}
\usepackage[colorlinks=true,urlcolor=blue,
citecolor=red,linkcolor=blue,linktocpage,pdfpagelabels,
bookmarksnumbered,bookmarksopen]{hyperref}

\usepackage[metapost]{mfpic}
\usepackage[normalem]{ulem}

\usepackage{lmodern} 

\linespread{1.1}

\newtheorem{Th}{Theorem}[section]
\newtheorem{Prop}[Th]{Proposition}
\newtheorem{Lem}[Th]{Lemma}
\newtheorem{Cor}[Th]{Corollary}
\newtheorem{Def}[Th]{Definition}
\newtheorem{Rem}[Th]{Remark}
\newtheorem{Ex}[Th]{Example}

\newcommand{\vp}{\varphi}

\newcommand{\eps}{\varepsilon}

\newcommand{\R}{\mathbb{R}}

\newcommand{\Z}{\mathbb{Z}}
\newcommand{\N}{\mathbb{N}}
\newcommand{\bR}{\mathbb{R}} 

\newcommand{\bZ}{\mathbb{Z}}
\newcommand{\bN}{\mathbb{N}}

\newcommand{\cA}{{\mathcal A}}
\newcommand{\cB}{{\mathcal B}}
\newcommand{\cC}{{\mathcal C}}
\newcommand{\cD}{{\mathcal D}}
\newcommand{\cE}{{\mathcal E}}
\newcommand{\cF}{{\mathcal F}}
\newcommand{\cG}{{\mathcal G}}
\newcommand{\cH}{{\mathcal H}}
\newcommand{\cI}{{\mathcal I}}
\newcommand{\cJ}{{\mathcal J}}

\newcommand{\cL}{{\mathcal L}}
\newcommand{\cM}{{\mathcal M}}
\newcommand{\cN}{{\mathcal N}}
\newcommand{\cO}{{\mathcal O}}
\newcommand{\cP}{{\mathcal P}}

\newcommand{\cS}{{\mathcal S}}

\newcommand{\cU}{{\mathcal U}}

\newcommand\J{\mathcal{J}}

\newcommand{\weakto}{\rightharpoonup}

\newcommand{\tu}{\widetilde{u}}

\renewcommand\J{\mathcal{J}}
\newcommand{\lJ}[2]{\cJ_{#1}^{#2}}

\numberwithin{equation}{section}

 \newcommand{\curl}{\nabla \times}
 
 \renewcommand{\div}{\mathrm{div}\,}


\newcommand{\crit}{\mathrm{crit}}
\newcommand{\critJ}{\crit(\cJ)}
\newcommand{\dd}{\mathrm{d}}

\newcommand{\lowlevel}{-\alpha}

\begin{document}

\title[Multiplicity theorem for functionals with sign-changing nonlinearity]{Multiplicity of critical orbits to nonlinear, strongly indefinite functionals with sign-changing nonlinear part}

\author[F. Bernini]{Federico Bernini}
    \address[F. Bernini]{\newline\indent
	Dipartimento di Matematica e Applicazioni
	\newline\indent
	Università degli Studi di Milano-Bicocca
	\newline\indent
	Via R. Cozzi, 55
	\newline\indent
	I-20125 Milan, Italy.}
    \email{\href{mailto:federico.bernini@unimib.it}{federico.bernini@unimib.it}
}

\author[B. Bieganowski]{Bartosz Bieganowski}
	\address[B. Bieganowski]{
	\newline\indent Faculty of Mathematics, Informatics and Mechanics,
	\newline\indent University of Warsaw,
	\newline\indent ul. Banacha 2, 02-097 Warsaw, Poland}	
	\email{\href{mailto:bartoszb@mimuw.edu.pl}{bartoszb@mimuw.edu.pl}}	
	
\author[D. Strzelecki]{Daniel Strzelecki}
	\address[D. Strzelecki]{
	\newline\indent Faculty of Mathematics, Informatics and Mechanics,
	\newline\indent University of Warsaw,
	\newline\indent ul. Banacha 2, 02-097 Warsaw, Poland}	
	\email{\href{mailto:dstrzelecki@mimuw.edu.pl}{dstrzelecki@mimuw.edu.pl}}	

\pagestyle{myheadings} \markboth{\underline{F. Bernini, B. Bieganowski, D. Strzelecki}}{
		\underline{TBA}}

\newcommand{\spann}{\mathrm{span}\,}

\newcommand{\triple}[1]{{\left\vert\kern-0.25ex\left\vert\kern-0.25ex\left\vert #1 
\right\vert\kern-0.25ex\right\vert\kern-0.25ex\right\vert}}

\begin{abstract} 
We present an abstract critical point theorem about the existence of infinitely many critical orbits to strongly indefinite functionals with a sign-changing nonlinear part defined on a dislocation space with a discrete group action. We apply the abstract result to a Schr\"odinger equation
$$
-\Delta u + V(x) u = f(u) - \lambda g(u)
$$
with $0$ in the spectral gap of the Schr\"odinger operator $-\Delta + V(x)$, that appears in nonlinear optics. We also consider equations with singular potentials arising from the study of cylindrically symmetric, electromagnetic waves to the system of Maxwell equations.

\medskip

\noindent \textbf{Keywords:} strongly indefinite functionals, sign-changing nonlinearities, multiplicity of solutions, linking geometry, nonlinear Schr\"odinger equations, electromagnetic waves
   
\noindent \textbf{AMS 2020 Subject Classification:} 35Q55, 35A15, 35J20, 35Q60, 58E05, 
\end{abstract}

\maketitle

\section{Introduction}

In this paper, we are interested in an abstract critical point theory that allows us to study the multiplicity of critical points of strongly indefinite functionals. Consider a real Hilbert space $(X, \langle \cdot, \cdot \rangle)$ and a nonlinear, $\cC^1$-functional $\cJ : X \rightarrow \R$. In variational methods, looking for solutions to certain partial differential equations reduces to finding \textit{nontrivial critical points} of $\cJ$, namely points $u \in X \setminus \{0\}$ with $\cJ'(u) = 0$. 

To introduce the notion of strongly indefinite problems, assume that $\cJ$ is of $\cC^2$ class and that there is an orthogonal splitting $X = X^+ \oplus X^-$ such that the second variation $\cJ''(0)[u,u]$ is positive definite on $X^+$ and negative definite on $X^-$. If $X^- =\{0\}$ then we say that $\cJ$ is \textit{positive definite}; otherwise we say that $\cJ$ is \textit{strongly indefinite}. 

Existence of nontrivial critical points of positive definite functionals can be obtained, under some geometrical assumptions, via the mountain pass theorem introduced by Ambrosetti and Rabinowitz \cite{AmbrRab} or via the Nehari manifold method proposed in \cite{Nehari} that is closely related to the Poho\v{z}aev's fibering method \cite{PohozaevFibering1, PohozaevFibering2}. For strongly indefinite problems, the mentioned techniques have been generalized. An important contribution is due to Rabinowitz \cite{Rab78} in the case when one of the subspaces $X^+$, $X^-$ is finite-dimensional and is known as the \textit{linking theorem}. It was later generalized by Kryszewski and Szulkin \cite{KS} to the case where both subspaces have infinite dimensions. The Nehari manifold approach was extended by Pankov \cite{Pankov}, and later applied by Szulkin and Weth \cite{SzW} to the setting where both subspaces are infinite-dimensional. 
Suppose that the functional $\cJ$ has the following form
$$
\cJ(u) = \frac12 \|u^+\|^2 - \frac12 \|u^-\|^2 - \cI(u),
$$
where $u = u^+ + u^- \in X$ and and the nonlinear functional $\cI$ exhibits super-quadratic growth at infinity. In applications, the quadratic part of $\cJ$ is related to a differential operator $L$ and $\cI$ to the nonlinearity $f$ of a partial differential equation $Lu = f(u)$. We emphasize that in all the mentioned results it is required that $\cI(u) \geq 0$.

Recently, there have been contributions that allow $\cI$ to change its sign in $X$. For the positive definite case, we refer to \cite{BiegMed}, where the Nehari manifold method has been adapted, and to \cite{Bi} for the mountain pass approach. In the strongly indefinite case, we refer the reader to \cite{BB, CW}.

In the case where $\cJ$ enjoys symmetry properties, e.g. if it is even, one can expect to show also the multiplicity of solutions (more precisely: existence of infinitely many solutions). a particularly powerful tool is the fountain theorem provided by Bartsch \cite{Bartsch}, applied already in many contexts, see e.g. \cite{BC1}. Another possible approach is to use the linking geometry and the notion of the so-called \textit{(PS)-attractors} \cite{KS, BartschDing} or the Nehari-Pankov manifold \cite{SzW, dePaiva}. Recently, under some convexity assumptions on $\cI$, it has been shown that the strongly indefinite problem may be reduced to the positive definite one, obtaining the existence and multiplicity of solutions \cite{MSS, BiegMed2}. We remark again that in all the mentioned papers, the assumption $\cI(u) \geq 0$ is crucial.

In the case of a sign-changing $\cI$, not much is known so far. In the positive definite setting, the approaches developed in \cite{SzW, dePaiva} have been adapted to sign-changing $\cI$ in \cite{BB_JMAA, Bi}. For the strongly indefinite case, there is a very recent contribution \cite{Gu}, where the authors established the multiplicity of critical points for a strongly indefinite functional corresponding to the stationary Schr\"odinger equation with sign-changing nonlinearity. 

Usually, differential equations admit certain symmetries. If $G$ is a group acting on $X$ and $X^+$, $X^-$ are $G$-invariant, and $\cJ$ is a $G$-invariant functional, then for every critical point $u \in X$, the whole \textit{orbit} $\cO(u)$ of $u$ under the action of $G$ consists of critical points. Hence, in the mentioned results, the authors have shown the existence of infinitely many \textit{critical orbits} of $\cJ$. Two solutions whose orbits are disjoint are then called  \textit{geometrically distinct} (see Section \ref{sect:funct} for the formal definition).

In the present paper, we are interested in providing an abstract critical point theorem about the existence of infinitely many critical orbits of a given functional $\cJ$ under the action of a group $G$. The theorem is stated in abstract \textit{dislocation spaces} introduced by Tintarev and Fieseler \cite{Tintarev2007}. 

The paper is organized as follows. In Section \ref{sect:funct} we introduce the notion of dislocation spaces and the functional setting. The main theorem (Theorem \ref{T:mainTheorem}) is stated and proved in Section \ref{sect:abstr}. Sections \ref{sect:appl1} and \ref{sect:appl2} are devoted to applications, where we show the multiplicity of solutions to strongly indefinite Schr\"odinger-type equations with sign-changing nonlinearities that appear in nonlinear optics and when one looks for time-harmonic electromagnetic waves to the system of Maxwell equations.

\section{Functional setting}
\label{sect:funct}

Let $(X, \langle \cdot, \cdot \rangle)$ be a separable, real Hilbert space and $\cJ:X\to\bR$ be a $\cC^1$-class nonlinear functional. We recall that, due to the Riesz representation theorem, for any $u \in X$ there exists $\nabla \cJ(u) \in X$ such that $\langle \nabla \cJ(u), v \rangle =\cJ'(u)(v)$ for all $v\in X$.

\begin{Def}
We say that a sequence $(u_n)\subset X$ is a Palais-Smale sequence (or a $(PS)$-sequence) for $\cJ$ if
\[
(\cJ(u_n))\text{ is bounded}\quad\text{and}\quad \cJ'(u_n)\to 0\text{ in }X^*.
\]
We shall say that $(u_n)$ is a $(PS)_c$-sequence if it is a Palais-Smale sequence and additionally $\cJ(u_n)\to c$.
\end{Def} 

Moreover we recall that $\cJ'$ is \textit{sequentially weak-to-weak* continuous} if 
\[
u_n\weakto u \, \Rightarrow \, \forall {v\in X} \ \langle \nabla\cJ(u_n) ,v\rangle \to \langle \nabla\cJ(u),v\rangle.
\]
Clearly, this implies that any weak limit of the $(PS)$-sequence is a critical point of $\cJ$.

Suppose that there is an orthogonal splitting $X = X^+ \oplus X^-$ and for $u = u^+ + u^-$ with $u^\pm \in X^\pm$ we have
$$
\|u\|^2 = \|u^+\|^2 + \|u^-\|^2,
$$
where $\| \cdot \|$ is the norm induced by the scalar product $\langle \cdot, \cdot \rangle$. Furthermore, suppose that $\cJ$ is of the following form
\begin{equation}\label{E:introJ}
   \cJ(u)=\frac 12 \|u^+\|^2-\frac12\|u^-\|^2-\cI(u),\quad u=u^+ + u^-\in X^+\oplus X^-,
\end{equation}
where $\cI:X \to \R$ is a nonlinear functional. 

We make use the following notation:
\[
\cJ_{\alpha}=\{u\in X\,:\, \alpha<\cJ(u)\},\quad
\cJ^{\beta}=\{u\in X\,:\, \cJ(u)\leq \beta\},\quad
\cJ_{\alpha}^{\beta}=\cJ_{\alpha}\cap\cJ^{\beta}.
\]

\subsection{Dislocation spaces and profile decompositions}
From now on, we assume that on $X$ there is given a unitary action of the group $G$, i.e. there is a group homomorphism $T:G\to GL(X)$ such that $T_g=T(g)$ is a unitary operator. For simplicity, we will denote the operator $T_g$ by $g$ if no confusion can arise. 

\begin{Def}
We say that the functional $\cJ:X\to\bR$ is $G$-invariant if for any $g\in G$ and $u\in X$, we have $\cJ(gu)=\cJ(u)$. We say that a subset $A \subset X$ is $G$-invariant if for any $g \in G$ and $u \in A$, we have $gu \in A$.
\end{Def}

\begin{Rem}
If $\cJ:X\to\bR$ is $G$-invariant then the gradient $\nabla \cJ:X\to X$ is $G$-equivariant i.e. for any $g\in G$ and $u\in X$ there holds $\nabla\cJ(gu)=g\nabla\cJ(u)$. In particular, the set of critical points is $G$-invariant.
\end{Rem}

Observe that for $G$-invariant functional $\cJ$ the sets $\cJ_{\alpha}$, $\cJ^{\beta}$ are $G$-invariant.

Hereafter, we assume that the spaces $X^+$ and $X^-$ are $G$-invariant. We will introduce the notion of a dislocation space based on \cite{Tintarev2007, Tintarev2020} in the setting we deal with. Note that this definition could be given for a more general set of linear operators.
 
\begin{Def}
Let $(u_n) \subset X$, $u \in X$. We say that $u_n \stackrel{G}{\rightharpoonup} u$ if for all $\varphi \in X$
$$
\lim_{n \to \infty} \sup_{g \in G} \langle u_n - u, g \varphi \rangle = 0.
$$
\end{Def}

\begin{Def}
We say that a sequence $(g_n)\subset GL(X)$ converges strongly (resp. weakly) to $g$ if we have $g_nu\to gu$ (resp. $g_nu{\rightharpoonup} gu$) for all $u\in X$. We will use the notation $g_n{\rightharpoonup} g$ in the case of weak convergence.
\end{Def}

Note that the notion introduced above is weaker than convergence and weak-convergence in $X^*$.

\begin{Def}
We say that the pair $(X,G)$ is a dislocation space if for any sequences $(u_n)\subset X$ and $(g_n)\subset G$ the following holds
\[
g_n \not\rightharpoonup 0, \, u_n\weakto 0 \, \Rightarrow g_nu_n\weakto 0 \text{ up to a subsequence.}
\]
\end{Def}

From now on, until the end of the section, $(X,G)$ is a dislocation space. In dislocation spaces, one can show the following, general profile decomposition for bounded sequences.

\begin{Th}[\cite{Tintarev2007}, Theorem 3.1]\label{T:tintarev}
Let $(u_n) \subset X$ be a bounded sequence. Then there are $K \in \{0,1, 2, \ldots \} \cup \{\infty\}$, $w^k \in X \setminus \{0\}$, $g_n^k \in G$ for $1 \leq k < K+1$ such that $g_n^1 = id$ (if $K \geq 1$) and
\begin{align*}
    (g_n^k)^{-1} u_n &\weakto w^k, \\
    g_n^k (g_n^l)^{-1} &\weakto 0 \mbox{ for } k \neq l, \\
    \sum_{k=1}^K \| w^k \|^2 &\leq \limsup_{n\to\infty} \|u_n\|^2, \\
    u_n - \sum_{k=1}^K g_n^k w^k &\stackrel{G}{\rightharpoonup} 0.
\end{align*}
\end{Th}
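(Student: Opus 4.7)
The plan is an iterative ``bubble extraction'' procedure combined with a diagonal subsequence argument. Starting with a bounded $(u_n)$, I first pass to a subsequence so that $u_n \weakto w^1$ and declare $g_n^1 := \id$ (if the weak limit is zero I skip this step and begin the indexing of the profiles from $k=1$ with a non-trivial translation). Having constructed profiles $w^1, \dots, w^{k-1}$ and translations $g_n^1, \dots, g_n^{k-1}$, I consider the residual
\[
r_n^{k-1} := u_n - \sum_{j=1}^{k-1} g_n^j w^j,
\]
which is bounded in $X$. If $r_n^{k-1} \stackrel{G}{\rightharpoonup} 0$, the process terminates with $K = k-1$. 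Otherwise there exist $\varphi \in X$ and $(h_n) \subset G$ for which $\langle r_n^{k-1}, h_n\varphi\rangle$ stays bounded away from zero along a subsequence; passing to a further subsequence gives $h_n^{-1} r_n^{k-1} \weakto w^k \neq 0$, and I set $g_n^k := h_n$. To make the selection quantitative, I choose $w^k$ with norm at least, say, $\tfrac12$ of the supremum of all such possible limits; this bookkeeping is what ultimately forces the residual to $G$-converge to zero.

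The central obstacle, and the step that genuinely uses the dislocation axiom, is proving the asymptotic orthogonality $g_n^k (g_n^l)^{-1} \weakto 0$ for $k \neq l$. I proceed by induction on $k$. Using the already-known orthogonalities $g_n^l (g_n^j)^{-1} \weakto 0$ for $l < j < k$ (which ensure $(g_n^l)^{-1} g_n^j w^j \weakto 0$) together with $(g_n^l)^{-1} r_n^l \weakto 0$ from the construction of the $l$-th step, one deduces $(g_n^l)^{-1} r_n^{k-1} \weakto 0$. Writing
\[
(g_n^k)^{-1} r_n^{k-1} = \big((g_n^k)^{-1} g_n^l\big) \cdot (g_n^l)^{-1} r_n^{k-1},
\]
if $(g_n^k)^{-1} g_n^l \not\weakto 0$, the dislocation property applied to the sequences $\big((g_n^k)^{-1}g_n^l\big)$ in $GL(X)$ and $(g_n^l)^{-1} r_n^{k-1}$ in $X$ would force $(g_n^k)^{-1} r_n^{k-1} \weakto 0$ along a subsequence, contradicting $(g_n^k)^{-1} r_n^{k-1} \weakto w^k \neq 0$. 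Since unitary operators satisfy $h_n \weakto 0 \Leftrightarrow h_n^{-1} \weakto 0$, the desired orthogonality follows.

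With asymptotic orthogonality in hand, the Bessel-type bound $\sum_k \|w^k\|^2 \leq \limsup_n\|u_n\|^2$ follows from expanding
\[
\|u_n\|^2 = \Big\|r_n^K + \sum_{k=1}^K g_n^k w^k\Big\|^2,
\]
since each cross-term $\langle g_n^k w^k, g_n^j w^j\rangle = \langle w^k,(g_n^k)^{-1} g_n^j w^j\rangle$ tends to zero, and similarly $\langle g_n^k w^k, r_n^K\rangle \to 0$ by combining the orthogonalities with $(g_n^k)^{-1} r_n^k \weakto 0$. If $K = \infty$, Bessel forces $\|w^k\| \to 0$, and the greedy choice then implies that the $G$-weak remainder necessarily vanishes; a standard diagonal extraction produces a single subsequence along which $u_n - \sum_{k=1}^\infty g_n^k w^k \stackrel{G}{\rightharpoonup} 0$. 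I expect the diagonal/termination argument, and in particular translating the greedy norm control into genuine $G$-weak convergence of the full remainder, to be the most delicate part of the proof.
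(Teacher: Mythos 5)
The paper offers no internal proof of this statement: Theorem \ref{T:tintarev} is quoted verbatim (up to notation) from Tintarev--Fieseler, and the authors rely on the citation. Your reconstruction follows essentially the same route as the cited source: iterative extraction of profiles chosen near-maximally in norm, the dislocation axiom to force asymptotic decoupling, an almost-orthogonality/Bessel expansion for the energy bound, and a diagonal subsequence at the end. The core step is handled correctly: from $(g_n^l)^{-1}r_n^{k-1}\weakto 0$ and the assumption $(g_n^k)^{-1}g_n^l\not\weakto 0$, the dislocation property gives $(g_n^k)^{-1}r_n^{k-1}\weakto 0$ along a subsequence, contradicting $(g_n^k)^{-1}r_n^{k-1}\weakto w^k\neq 0$; this is exactly how the axiom is meant to be used. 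One small mismatch with the statement as printed: your argument (and the unitary inverse trick) yields $(g_n^k)^{-1}g_n^l\weakto 0$ and $(g_n^l)^{-1}g_n^k\weakto 0$, which is Tintarev's form of decoupling, not literally $g_n^k(g_n^l)^{-1}\weakto 0$; for the abelian groups used in this paper ($\Z^N$, $\Z^{N-K}$) the two coincide, but in a noncommutative setting they need not. Similarly, your handling of the case $u_n\weakto 0$ but $u_n\not\stackrel{G}{\weakto}0$ (starting with a nontrivial $g_n^1$) departs from the normalization $g_n^1=\id$ in the statement, though the statement itself is loose at exactly this point, since it cannot simultaneously insist on $g_n^1=\id$ and $w^1\neq 0$ in that case.

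The one place where real work remains is the step you yourself flag: the case $K=\infty$. Asserting that ``Bessel forces $\|w^k\|\to 0$ and the greedy choice then implies the $G$-weak remainder vanishes'' hides two things that must be written out. First, for a fixed $\varphi$ the greedy selection only controls $\limsup_n\sup_{g\in G}|\langle r_n^m,g\varphi\rangle|$ by a multiple of $\|w^{m+1}\|\,\|\varphi\|$, so you must pass from the vanishing of these finite-stage quantities to the full remainder $u_n-\sum_{k=1}^\infty g_n^kw^k$; this requires a uniform-in-$n$ bound on the tails $\bigl\|\sum_{k>m}g_n^kw^k\bigr\|$, and hence a quantitative version of the asymptotic orthogonality (e.g. $|\langle g_n^kw^k,g_n^lw^l\rangle|\le 2^{-k-l}$ for $n$ large) built into the diagonal extraction. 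Second, the convergence of the series $\sum_k g_n^kw^k$ for the diagonal subsequence is itself part of the conclusion and rests on the same bookkeeping. None of this fails -- it is precisely the renumbered-subsequence argument in the cited reference -- but as written it is a sketch rather than a proof, and it is the part that would need to be completed before the argument stands on its own.
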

Note that if $K = 0$, we have $u_n \stackrel{G}{\weakto} 0$. Otherwise $K \geq 1$.

Observe that, thanks to the $G$-invariance of $X^\pm$ and their mutual orthogonality, we obtain the following fact.

\begin{Prop}\label{P:D-convergence_of_projections}
    If $u_n \stackrel{G}{\rightharpoonup} 0$, then $u_n^{\pm}\stackrel{G}{\rightharpoonup} 0$.
\end{Prop}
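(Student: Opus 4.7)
The plan is to unfold the definition of $G$-weak convergence and reduce the estimate of $\langle u_n^\pm, g\varphi\rangle$ to $\langle u_n, g\psi\rangle$ for a suitable $\psi$, using the orthogonal decomposition $X = X^+\oplus X^-$ together with the $G$-invariance of $X^\pm$.

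Fix an arbitrary test vector $\varphi\in X$ and split it as $\varphi = \varphi^+ + \varphi^-$ with $\varphi^\pm\in X^\pm$. Since $X^\pm$ are $G$-invariant and $g$ is unitary (hence in particular linear), for every $g\in G$ we have $g\varphi = g\varphi^+ + g\varphi^-$ with $g\varphi^\pm \in X^\pm$. This is the step that makes everything work: acting by $g$ commutes with the projections onto $X^\pm$.

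Now I would carry out the key computation. Using orthogonality of $X^+$ and $X^-$, one gets
\[
\langle u_n^+, g\varphi\rangle = \langle u_n^+, g\varphi^+\rangle + \langle u_n^+, g\varphi^-\rangle = \langle u_n^+, g\varphi^+\rangle = \langle u_n, g\varphi^+\rangle,
\]
where in the last equality I used that $\langle u_n^-, g\varphi^+\rangle = 0$. Taking $\sup_{g\in G}$ and letting $n\to\infty$, the assumption $u_n\stackrel{G}{\rightharpoonup} 0$ applied with the test vector $\varphi^+$ yields
\[
\lim_{n\to\infty} \sup_{g\in G} \langle u_n^+, g\varphi\rangle = \lim_{n\to\infty} \sup_{g\in G}\langle u_n, g\varphi^+\rangle = 0.
\]
Since $\varphi\in X$ was arbitrary, this is exactly $u_n^+\stackrel{G}{\rightharpoonup} 0$. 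The argument for $u_n^-\stackrel{G}{\rightharpoonup} 0$ is verbatim the same, replacing $\varphi^+$ by $\varphi^-$.

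There is no real obstacle here; the only thing to watch is to ensure that the two ingredients, namely $G$-invariance of the subspaces $X^\pm$ (which keeps $g\varphi^\pm\in X^\pm$) and the orthogonality $X^+\perp X^-$, are used cleanly to reduce a test against $g\varphi$ to a test against a component $g\varphi^\pm$. Both hypotheses are already in force by the standing assumptions on $(X,G)$ and the splitting of $X$, so the proposition follows in a few lines.
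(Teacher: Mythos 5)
Your proof is correct and follows essentially the same route as the paper's: split $\varphi=\varphi^++\varphi^-$, use the $G$-invariance of $X^\pm$ and the orthogonality of the splitting to rewrite $\langle u_n^\pm, g\varphi\rangle$ as $\langle u_n, g\varphi^\pm\rangle$, and apply the hypothesis with the test vector $\varphi^\pm$. No differences worth noting.
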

\begin{proof}
    Let $\varphi = \varphi^+ + \varphi^- \in X^+ \oplus X^-$. By the orthogonality of $X^+$ and $X^-$ we have
    \[
    \begin{split}
    \langle u_n^+,g\varphi\rangle&=\langle u_n^+,g\varphi^+\rangle+\langle u_n^+,g\varphi^-\rangle=\langle u_n^+,g\varphi^+\rangle\\
    &=\langle u_n^+,g\varphi^+\rangle + \langle u_n^-,g\varphi^+\rangle =\langle u_n,g\varphi^+\rangle.
   \end{split}
    \]
    By the assumption $\lim_{n \to \infty} \sup_{g \in G} \langle u_n , g \varphi^+ \rangle = 0$, so taking the supremum and the limit, we get $u_n^+\stackrel{G}{\rightharpoonup} 0$. The proof for $u_n^-$ is analogous.
\end{proof}

Recall that $\cJ : X \rightarrow \R$ is a nonlinear functional of the form \eqref{E:introJ}. Assume in addition that $\cJ$ is $G$-invariant and that the following two conditions are satisfied
\begin{enumerate}
[label=(J\arabic{*}),ref=J\arabic{*}]\setcounter{enumi}{0}
    \item \label{J1} $\cI : X \rightarrow \R$ is of $\cC^1$-class with $\cI(0)=0$;
    \item \label{J2} $\cJ'$ is sequentially weak-to-weak* continuous.
\end{enumerate}
We introduce the following notation
$$
\critJ  := \{ u \in X \, : \, \cJ'(u) = 0 \}.
$$
As we are interested in a profile decomposition of Palais-Smale sequences, we note the following consequence of Theorem \ref{T:tintarev}.

\begin{Cor}\label{cor:1}
If $(u_n)$ is a bounded Palais-Smale sequence for $\cJ$, with $\cJ$ being $G$-invariant and satisfying (\ref{J1}) and (\ref{J2}), then, in the notion of Theorem \ref{T:tintarev},
\begin{itemize}
    \item[(i)] $w^k$ are critical points of $\cJ$;
    \item[(ii)] if, in addition, $\eta:=\inf_{u \in \critJ \setminus \{0\}} \|u\| >0$, then $K \leq \frac{\limsup_{n\to\infty} \|u_n\|^2}{\eta^2} < \infty.$
\end{itemize}
\end{Cor}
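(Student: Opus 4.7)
The statement has two parts, and both reduce to combining the profile decomposition of Theorem \ref{T:tintarev} with the $G$-equivariance of $\nabla\cJ$ and the hypotheses (\ref{J1})–(\ref{J2}).

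For part (i), the idea is to show that for every fixed $k$ with $1\le k<K+1$, the sequence $v_n^k := (g_n^k)^{-1} u_n$ is itself a Palais–Smale sequence whose weak limit is $w^k$. Since $\cJ$ is $G$-invariant, the Remark in Section \ref{sect:funct} gives that $\nabla\cJ$ is $G$-equivariant, so
\[
\nabla\cJ(v_n^k) \;=\; \nabla\cJ\bigl((g_n^k)^{-1} u_n\bigr) \;=\; (g_n^k)^{-1}\nabla\cJ(u_n).
\]
Because each $g_n^k$ is unitary, $\|\nabla\cJ(v_n^k)\| = \|\nabla\cJ(u_n)\|\to 0$. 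By Theorem \ref{T:tintarev} we also have $v_n^k \rightharpoonup w^k$, so the sequential weak-to-weak$^*$ continuity of $\cJ'$ assumed in (\ref{J2}) yields
\[
\langle \nabla\cJ(w^k), \varphi\rangle \;=\; \lim_{n\to\infty}\langle \nabla\cJ(v_n^k), \varphi\rangle \;=\; 0
\]
for every $\varphi\in X$. Hence $\nabla\cJ(w^k)=0$, i.e.\ $w^k\in\crit(\cJ)$. Since Theorem \ref{T:tintarev} also guarantees $w^k\ne 0$, this exhibits $w^k$ as a nontrivial critical point of $\cJ$.

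Part (ii) is then immediate. Under the additional assumption $\eta=\inf_{u\in\crit(\cJ)\setminus\{0\}}\|u\|>0$, part (i) gives $\|w^k\|\ge \eta$ for every $1\le k<K+1$. The energy-type inequality in Theorem \ref{T:tintarev} therefore yields
\[
K\eta^2 \;\le\; \sum_{k=1}^K \|w^k\|^2 \;\le\; \limsup_{n\to\infty}\|u_n\|^2,
\]
which, since $(u_n)$ is bounded, forces $K \le \limsup_n\|u_n\|^2/\eta^2 <\infty$.

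\textbf{Main difficulty.} The statement is a corollary rather than a deep result, so there is no substantive obstacle; the only delicate point is bookkeeping the interaction between the $G$-equivariance of $\nabla\cJ$ and the unitarity of $g_n^k$, which is exactly what allows one to transfer the Palais–Smale property from $(u_n)$ to $((g_n^k)^{-1}u_n)$ and thereby combine it with the weak convergence supplied by Theorem \ref{T:tintarev}.
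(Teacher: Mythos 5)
Your proposal is correct and follows essentially the same route as the paper: both arguments combine the $G$-equivariance of $\nabla\cJ$, the unitarity of the $g_n^k$, the weak convergence $(g_n^k)^{-1}u_n\rightharpoonup w^k$ from Theorem \ref{T:tintarev}, and the weak-to-weak$^*$ continuity of $\cJ'$ to conclude $\cJ'(w^k)=0$, and then part (ii) follows from the energy inequality exactly as you write. Your phrasing of transferring the Palais--Smale property to $(g_n^k)^{-1}u_n$ first is only a cosmetic reordering of the paper's chain of equalities, not a different argument.
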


\begin{proof}
\begin{itemize}
\item[(i)] Note that, since $\cJ$ is $G$-invariant, (\ref{J1}) and (\ref{J2}) hold, and $G$ acts unitarly, we get
\begin{align*}
\left| \cJ'( w^k) (\varphi) \right| &= \lim_{n\to\infty} \left| \cJ' ( (g_n^k)^{-1} u_n) (\varphi) \right| =\lim_{n\to\infty} \left| \langle \nabla \cJ ( (g_n^k)^{-1} u_n) , \varphi \rangle \right| \\
&= \lim_{n\to\infty} \left| \langle  (g_n^k)^{-1} \nabla \cJ (  u_n) , \varphi \rangle \right| = \lim_{n\to\infty} \left| \langle   \nabla \cJ (  u_n) , g_n^k \varphi \rangle \right| \\
&\leq \lim_{n\to\infty} \| \nabla \cJ (u_n) \| \| g_n^k \varphi \| = \lim_{n\to\infty} \|\nabla\cJ (u_n) \| \| \varphi \| = 0
\end{align*}
for any $\varphi \in X$.
\item[(ii)] From Theorem \ref{T:tintarev}
\begin{align*}
\limsup_{n\to\infty} \| u_n\|^2 \geq \sum_{k=1}^K \|w^k\|^2 \geq \eta^2 K
\end{align*}
and therefore $K \leq \frac{\limsup_{n\to\infty} \|u_n\|^2}{\eta^2} < \infty$.
\end{itemize}
\end{proof}
\begin{Rem}
    Since $\cJ$ is $G$-invariant and satisfies (\ref{J1}), (\ref{J2}), the critical points $w^k$ can be chosen as any representatives of their orbits.
\end{Rem}

We introduce the following sequential, uniform $G$-weak-to-weak*-type continuity of $\cI'$:
\begin{equation}\label{GWC}\tag{GWC}
      \text{if $(v_n), (\varphi_n) \subset X$, $(v_n)$ is bounded and $\varphi_n \stackrel{G}{\rightharpoonup} 0$, then $\cI'(v_n)(\varphi_n) \to 0$.}
\end{equation}

Observe that if $\varphi_n \stackrel{G}{\rightharpoonup} 0$, then $\varphi_n \weakto 0$. Hence for strongly convergent sequence $v_n \to v \in X$, the property $\cI'(v_n)(\varphi_n) = \langle \nabla \cI (v_n), \varphi_n \rangle \to 0$ is clear. In (\ref{GWC}), under a stronger convergence on $\varphi_n$, we require that $\cI'(v_n)(\varphi_n) \to 0$ for every bounded sequence $(v_n)$.

\begin{Cor}\label{C:Tintarev_in_norm}
Suppose that $\cI'$ satisfies (\ref{GWC}). In the setting of Corollary \ref{cor:1}(ii) we have
$$
\left\| u_n  - \sum_{k=1}^K g_n^k w^k \right\| \to 0.
$$
\end{Cor}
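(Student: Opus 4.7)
Let me write $r_n := u_n - \sum_{k=1}^K g_n^k w^k$, so that by Theorem \ref{T:tintarev} we have $r_n \stackrel{G}{\rightharpoonup} 0$, and by Proposition \ref{P:D-convergence_of_projections} also $r_n^{\pm}\stackrel{G}{\rightharpoonup} 0$; in particular $\varphi_n := r_n^+ - r_n^- \stackrel{G}{\rightharpoonup} 0$. The plan is to use $\varphi_n$ as a test function for $\cJ'(u_n)$ in order to extract exactly $\|r_n\|^2$ as the leading term on the left and recover something that tends to zero on the right.

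Since $\cJ(u) = \frac12\|u^+\|^2 - \frac12\|u^-\|^2 - \cI(u)$, for any $v \in X$
\[
\cJ'(u_n)(v) = \langle u_n^+ - u_n^-, v \rangle - \cI'(u_n)(v).
\]
Using that $X^{\pm}$ are $G$-invariant, I can write $u_n^+ - u_n^- = (r_n^+ - r_n^-) + \sum_{k=1}^K g_n^k\bigl((w^k)^+ - (w^k)^-\bigr)$, so evaluating at $v = \varphi_n$ and using orthogonality of $X^\pm$:
\[
\langle u_n^+ - u_n^-, \varphi_n \rangle = \|r_n^+\|^2 + \|r_n^-\|^2 + \sum_{k=1}^K \bigl\langle g_n^k\bigl((w^k)^+ - (w^k)^-\bigr),\, r_n^+ - r_n^- \bigr\rangle.
\]
Thus
\[
\|r_n\|^2 = \cJ'(u_n)(\varphi_n) + \cI'(u_n)(\varphi_n) - \sum_{k=1}^K \bigl\langle r_n^+ - r_n^-,\, g_n^k\bigl((w^k)^+ - (w^k)^-\bigr) \bigr\rangle.
\]

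Now I claim each term on the right vanishes as $n\to\infty$. The sequences $(u_n)$ and hence $(r_n^{\pm})$ and $(\varphi_n)$ are bounded (the bound on $K$ from Corollary \ref{cor:1}(ii) and the norm estimate in Theorem \ref{T:tintarev} ensure $\sum_k g_n^k w^k$ stays bounded). Thus $\cJ'(u_n)(\varphi_n) \to 0$ because $\|\cJ'(u_n)\|_{X^*}\to 0$. The $\cI'$ term vanishes by (\ref{GWC}) applied to the bounded sequence $v_n := u_n$ and $\varphi_n \stackrel{G}{\rightharpoonup} 0$. For the cross terms, fix $k$ and set $\psi^k := (w^k)^+-(w^k)^-\in X$; then
\[
\bigl| \langle r_n^+ - r_n^-, g_n^k \psi^k\rangle \bigr| \leq \sup_{g\in G}\bigl| \langle r_n^+, g\psi^k\rangle \bigr| + \sup_{g\in G}\bigl| \langle r_n^-, g\psi^k\rangle \bigr| \to 0
\]
thanks to $r_n^{\pm} \stackrel{G}{\rightharpoonup} 0$. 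Since $K$ is finite by Corollary \ref{cor:1}(ii), the finite sum of cross terms also vanishes, giving $\|r_n\|\to 0$.

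The only subtle point is the finiteness of $K$, needed to handle the cross-term sum termwise; without it one could not pull the limit inside the sum. This is why the statement is phrased in the setting of Corollary \ref{cor:1}(ii) rather than Corollary \ref{cor:1} in general. The rest is a direct computation combining the decomposition of $\cJ'$, the $G$-invariance of $X^\pm$, and the hypothesis (\ref{GWC}).
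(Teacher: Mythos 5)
Your proof is correct, and it follows the paper's overall strategy — test $\cJ'(u_n)$ against the remainder, isolate $\|r_n\|^2$, and kill the error terms using boundedness, (\ref{GWC}) and the profile decomposition — but it treats the cross terms differently. The paper tests against $\xi_n^+$ and $\xi_n^-$ separately and eliminates the terms $\pm\langle g_n^k (w^k)^\pm,\xi_n^\pm\rangle$ through the criticality of $g_n^k w^k$, writing them as $\cI'(g_n^k w^k)(\xi_n^\pm)$ and invoking (\ref{GWC}) a second time for the bounded sequence $(g_n^k w^k)_n$. You instead observe that, since $r_n^\pm \stackrel{G}{\rightharpoonup} 0$ and the convergence in the definition of $\stackrel{G}{\rightharpoonup}$ is uniform over $G$, evaluating at the particular group elements $g_n^k$ against the fixed vectors $(w^k)^\pm$ already gives $\langle r_n^\pm, g_n^k (w^k)^\pm\rangle \to 0$. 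This is slightly more economical: it uses (\ref{GWC}) only once (with $v_n=u_n$) and does not need Corollary \ref{cor:1}(i), i.e. the fact that the profiles are critical points, in this step; both routes do need finiteness of $K$ from Corollary \ref{cor:1}(ii) to pass to the limit in the finite sum, which you correctly flag. Your identity $\|r_n\|^2 = \cJ'(u_n)(\varphi_n)+\cI'(u_n)(\varphi_n)-\sum_{k=1}^K\langle \varphi_n, g_n^k((w^k)^+-(w^k)^-)\rangle$ with $\varphi_n=r_n^+-r_n^-$ is simply the sum of the paper's two identities for $\xi_n^+$ and $\xi_n^-$, so testing with $\varphi_n$ in one go rather than with $\xi_n^\pm$ separately is purely cosmetic.
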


\begin{proof}
Put $\xi_n := u_n  - \sum_{k=1}^K g_n^k w^k$. Since $(\xi_n)$ is bounded and $(u_n)$ is a Palais-Smale sequence, we get
$$
\left| \cJ' (u_n) ( \xi_n^\pm) \right| \leq \| \cJ'(u_n) \| \|  \xi_n^\pm \| \leq \|\cJ'(u_n) \| \| \xi_n\| \to 0.
$$
On the other hand,
\begin{align*}
\cJ'(u_n)( \xi_n^\pm) &= \pm \langle  u_n^\pm,  \xi_n^\pm \rangle - \cI'(u_n)(\xi_n^\pm) \\
&= \pm \| \xi_n^\pm\|^2 \pm \sum_{k=1}^K \langle g_n^k (w^k)^\pm, \xi_n^\pm \rangle - \cI'(u_n)(\xi_n^\pm) = (*).
\end{align*}
Since $g_n^k w^k$ are critical points of $\cJ$, we get
$$
0 = \cJ' (g_n^k w^k) ( \xi_n^\pm ) = \pm \langle g_n^k (w^k)^\pm, \xi_n^\pm \rangle - \cI'(g_n^k w^k) ( \xi_n^\pm ).
$$
Hence, since $(u_n)$ and $(g_n^k w^k)$ are bounded and $\xi_n^\pm \stackrel{G}{\rightharpoonup} 0$ (cf. Proposition \ref{P:D-convergence_of_projections})
\begin{align*}
(*) = \pm \| \xi_n^\pm\|^2 + \sum_{k=1}^K \cI'(g_n^k w^k) ( \xi_n^\pm ) - \cI'(u_n)(\xi_n^\pm) = \pm \| \xi_n^\pm \|^2 + o(1).
\end{align*}
Thus, $\|\xi_n^\pm\| \to 0$ and therefore $\|\xi_n\| \to 0$.
\end{proof}

In our work, we need an additional property of dislocation spaces, which we refer to as \emph{discreteness property}. 

For $\ell\in \N$ and a finite set $\cA\subset X$ we will denote
\[
[\cA,\ell]:=\left\{\sum_{i=1}^j g_i u_i\,:\, 1\leq j\leq \ell,\, g_i\in G,\, u_i\in\cA\right\}.
\]

\begin{Def}
    We say that a dislocation space $(X,G)$ has a discreteness property if  for any finite set $\cA\subset X$ and $\ell\in\bN$
    \[
    \inf\{\|u-u'\|\,:\, u,u'\in[\cA,\ell],\,u\neq u'\} >0.
    \]
\end{Def}
The discreteness property implies, among others things, that the orbit of each element $u\in X$ is discrete. This obviously excludes connected topological groups, like e.g. $\mathcal{SO}(N)$, from acting on dislocation spaces endowed with the discreteness property.

Below we give some examples of dislocation spaces $(X,G)$ having the discreteness property, with a description of $\stackrel{G}{\rightharpoonup}$ convergence. From now on $|\cdot|_r$ denotes the usual $L^r$-norm.

\begin{Ex}\label{ex:1}
Let $s \in (0,1]$, $N \geq 1$ and $H^s (\R^N)$ denote the usual Sobolev space. Then $(H^s (\R^N), \mathbb{Z}^N)$ is a dislocation space with the discreteness property, where the action of $\Z^N$ on $H^s (\R^N)$ is given by translations. Moreover, we have the following description of the topology:
\begin{itemize}
    \item[(i)] for $(z_n) \subset \Z^N$, $z_k \weakto 0$ if and only if $|z_k|\to\infty$;
    \item[(ii)] for a bounded sequence $(u_n) \subset H^s (\R^N)$, $u_n \stackrel{\Z^N}{\weakto} 0$ if and only if $|u_n|_p \to 0$ for any $p \in (2,2^*)$.
\end{itemize}
\end{Ex}

\begin{proof}
In the case $s=1$, the statement follows from \cite[Lemma 3.2]{Tintarev2007}, while for $s \in (0,1)$ the proof is similar. Again, in the case $s=1$, the discreteness property follows from \cite[Proposition 1.55]{CZR} and the proof in the case $s \in (0,1)$ is the same. Then the description of the topology follows from \cite[Lemma 3.1, Lemma 3.3]{Tintarev2007}.
\end{proof}

\begin{Ex}\label{ex:2}
Suppose that $N > K \geq 2$. Let $\cG(K) := \cO(K) \times \{ id_{N-K} \}$ be the group acting on $H^1 (\R^N)$. We will write $x = (y,z) \in \R^{K} \times \R^{N-K}$. Then we consider the space of $\cG(K)$-invariant functions $H^1_{\cG(K)} (\R^N)$. Then $(H^1_{\cG(K)} (\R^N), \Z^{N-K})$ is a dislocation space with the discreteness property, where the action of $\Z^{N-K}$ is given by translations with respect to the $z$-coordinate. Then, the topology is described as follows:
\begin{itemize}
    \item[(i)] for $(z_n) \subset \Z^{N-K}$, $z_k \weakto 0$ if and only if $|z_k|\to\infty$;
    \item[(ii)] for a bounded sequence $(u_n) \subset H^1_{\cG(K)} (\R^N)$, $u_n \stackrel{\Z^{N-K}}{\weakto} 0$ if and only if $|u_n|_p \to 0$ for any $p \in (2,2^*)$.
\end{itemize}
\end{Ex}

\begin{proof}
Recalling the concentration compactness principle \cite[Corollary 3.2, Remark 3.3]{MR4173560}, we can repeat the proof of \cite[Lemma 3.1, Lemma 3.2, Lemma 3.3]{Tintarev2007} to show that $(H^1_{\cG(K)} (\R^N), \Z^{N-K})$ is a dislocation space with the description of the topology given by (i) and (ii). To show the discreteness property, it is enough to repeat the proof of \cite[Proposition 4.3]{KS}.
\end{proof}

For the sake of completeness, we believe it could also be interesting to mention an example of a dislocation space without the discreteness property.

\begin{Ex}\label{ex:3}
Let $N\geq 3$, $\cD^{1,2} (\R^N) := \left\{ u \in L^{2^*}(\R^N) \ : \ \nabla u \in L^2 (\R^N) \right\}$. It is a Hilbert space with the norm $u \mapsto |\nabla u|_2$. Consider the action of $\R \times \R^N$ on $\cD^{1,2}(\R^N)$ given by
$$
(\R \times \R^N) \times \cD^{1,2}(\R^N) \ni ( (s, y), u) \mapsto 2^{\frac{N s}{2^*}} u(2^s (\cdot - y) ) \in \cD^{1,2} (\R^N).
$$
Then $(\cD^{1,2} (\R^N), \R \times \R^N)$ is a dislocation space, and we have the following description of the topology:
\begin{enumerate}
    \item[(i)] for $(s_n, y_n) \subset \R \times \R^N$, $(s_n, y_n) \weakto 0$ if and only if $|s_n| + |y_n| \to \infty$;
    \item[(ii)] for a bounded sequence $(u_n) \subset \cD^{1,2} (\R^N)$, $u_n \stackrel{\R \times \R^N}{\weakto} 0$ if and only if $|u_n|_{2^*} \to 0$.
\end{enumerate}
\end{Ex}

\begin{proof}
The statement follows directly from \cite[Lemma 5.1, Lemma 5.2, Lemma 5.3]{Tintarev2007}.
\end{proof}

Note that the dislocation space in Example \ref{ex:3} does not satisfy the discreteness property as the orbits of elements under the described action are not discrete. 

\subsection{\texorpdfstring{$\boldsymbol{\tau}$}{tau}-topology and admissible maps} 

For the proof of Theorem \ref{T:mainTheorem}, we rely on the $\tau$-topology, originally introduced in \cite{KS}. Let $(e_k)_{k=1}^\infty \subset X^-$ be a complete orthonormal sequence in the space $X^-$. Then we define a new norm $\triple{\cdot}$ in $X$ by
\[
	\triple{u} := \max \left\{ \| u^+ \|, \sum_{k=1}^\infty \frac{1}{2^{k}} \left| \langle u^-, e_k \rangle \right| \right\}.
\]
We denote by $\tau$ the topology on $X$ generated by $\triple{\cdot}$. We note that $\tau$ is weaker than the topology generated by the norm $\| \cdot \|$ and that the following inequalities hold
\[
	\|u^+\| \leq \triple{u} \leq \|u\|.
\]
We also recall that for bounded sequences $(u_n) \subset X$ the following equivalence holds true (see e.g. \cite[Remark 2.1(iii)]{KS})
\[
u_n \stackrel{\tau}{\to} u \quad \Longleftrightarrow \quad u_n^+ \to u^+ \mbox{ and } u_n^- \weakto u^-. 
\]
Based on \cite{KS}, we introduce the notion of admissible maps. Let $A \subset X$ be a closed subset. We say that a map $h : A \rightarrow X$ is \textit{admissible} if
\begin{itemize}
    \item it is \textit{$\tau$-continuous}, namely $h(u_n) \stackrel{\tau}{\to} h(u)$ if $u_n \stackrel{\tau}{\to} u$,
    \item the map $Id-h$, where $Id$ denotes the identity map, is \textit{$\tau$-locally finite-dimensional}, namely for every $u \in A$ there is a $\tau$-open neighborhood $U_u$ in $X$ such that $(Id-h)(U_u \cap A)$ is contained in a finite-dimensional subspace of $X$.
\end{itemize}

Let $W\subset X$ be a $\tau$-open set. We say that the map $h:W\to X$ is \textit{$\tau$-locally $\tau$-Lipschitzian} if for any $u\in W$ there is $\tau$-open neighborhood $U_u\subset W$ such that $\triple{h(u')-h(u'')}\leq L_u\triple{u'-u''}$ for all $u',u''\in U_u$ and some $L_u > 0$.

\begin{Rem}\label{R:admissibleFlow}
    Let $V:W\to X$ be $\tau$-locally $\tau$-Lipschitzian, locally Lipschitzian and $\tau$-locally finite-dimensional. Let $A\subset W$ be closed and assume that the flow generated by $V_{|A}$ exists on $[0,1]$; denote it by $\eta:A\times [0,1]\to X$. Then the map $\eta(\cdot,1):A\to X$ is an admissible map, see \cite[Proposition 2.2]{KS}. Note that admissible maps are continuous due to the equivalence of the norms $\triple{\cdot}$ and $\|\cdot\|$ on finite-dimensional subspaces of $X$.
\end{Rem}

Put $\Sigma=\left\{A \subset X \ : \  A=-A=\overline{A} \right\}$. For the set $A \in \Sigma$, assuming that $\cJ:X\to\bR$ is an even functional, we define
\begin{equation}\label{DefH}
	\cH(A,\cJ)=\left\{h:A \to X : \begin{array}{l} h(A) \mbox{ is closed, }\\ h \mbox{ is an odd, \textit{admissible} map, } \\ \mbox{for any } u \in A \mbox{ there holds } \cJ(h(u)) \leq \cJ(u)  \end{array} \right\}.
\end{equation}
If the functional $\cJ$ is known from the context, we will write $\cH(A)$ instead of $\cH(A,\cJ)$.
Now we recall the definition of the Krasnoselskii genus (see e.g. \cite[Definition II.5.1]{Struwe}). 
\begin{Def}\label{D:Kgenus} 
For $A \in \Sigma$ we define 
\[
	\gamma(A):= \inf\left\{k \in \N \ : \ \mbox{there exists an odd, continuous } \vp: A \to \R^k \setminus \{0\}\right\}.
\]
We set $\gamma(\emptyset)=0$.
\end{Def}
\noindent Note that if $A \in \Sigma$ and $0\in A$ then the set in the definition of $\gamma(A)$ is empty and $\gamma(A)=\infty$. One can find more properties of the Krasnoselskii genus in \cite{Rab86, Struwe}. In the next section we will construct some extensions of this tool.

\section{Abstract critical point theory}
\label{sect:abstr}

Let $(X,\langle \cdot, \cdot \rangle)$ be a real, separable Hilbert space with a given action of a group $G$ such that $(X,G)$ is a dislocation space with the discreteness property. Let $\cJ:X\to\bR$ be a $G$-invariant functional of the class $\cC^1$. This symmetry implies that the set of critical points $\crit(\cJ)$ consists of $G$-orbits $\cO(u):=\{gu\,:\,g\in G\}$. We say that critical points $u_1, u_2\in\critJ$ are \emph{geometrically distinct} if $\cO(u_1)\neq \cO(u_2)$.

We are going to formulate and prove an abstract theorem about the existence of multiple orbits of critical points of $\cJ$. Our idea is based on Theorem 4.1 from \cite{KS} and the flow of our proof will be similar, with two main differences: we state and prove the abstract theorem under the assumptions given on the functional, and we do not assume that the functional is $\tau$-upper-semicontinuous which was an important property of the reasoning given in \cite{KS}, but does not hold in our applications.

We consider the following assumptions
\begin{enumerate}
[label=(A\arabic{*}),ref=A\arabic{*}]\setcounter{enumi}{0}
\item \label{A1} $(X,G)$ is a dislocation space with the discreteness property and $X=X^+\oplus X^-$, where $X^+$ is infinite dimensional, $X^+$ and $X^-$ are orthogonal and $G$-invariant;
\item \label{A2} $\cJ:X\to\bR$ is given by $\cJ(u)=\frac 12\|u^+\|^2-\frac 12 \|u^-\|^2 - \cI(u)$, where $\cI : X \to \R$ is of $\cC^1$-class, $\cI'$ is sequentially weak-to-weak* continuous, satisfying (\ref{GWC})-property, and $\cI(0)=0$;
\item \label{A3} $\cJ$ is even and $G$-invariant;
\item \label{A4} $\critJ \setminus \{0\} \neq \emptyset$;
\item \label{A5} there exist $r>0$ and $r_0>0$ such that
\[
b:=\inf\limits_{\cS_{r}^+} \cJ > \sup\{ \cJ(u) \ : \ u \in X, \ \|u^+\|<r_0 \}=:a,
\]
where $\cS^{+}_r:= \{x\in X^{+}\ :\ \|x\|=r\}$.
\item \label{A6} if $\|u_n^-\|\to \infty$ and $\|u_n^+\|$ is bounded then $\cJ(u_n)\to -\infty$;
\item \label{AX} $\cJ$ is bounded from above  on bounded sets;
\item \label{A7} $\frac{\cI(u_n^+)}{\|u_n^+\|^2}\to\infty$, if $\|u_n^+\|\to\infty$ and $(u_n^+)$ is contained in a finite-dimensional subspace of $X^+$;
\item \label{A8} for every $\beta \in \R$ there is a constant $M_\beta > 0$ such that for every sequence $(u_n)$ satisfying
$$
\limsup_{n\to\infty} \cJ(u_n) \leq \beta, \quad \cJ'(u_n) \to 0
$$
there holds $\limsup_{n\to\infty} \|u_n\| \leq M_\beta$.
\end{enumerate}

\begin{Rem}\label{R:A7}
Note that (\ref{A7}) implies that $\cJ(u_n^+) \to -\infty$, if $\|u_n^+\|\to\infty$ and $(u_n^+)$ is contained in a finite-dimensional subspace of $X^+$. Indeed, suppose by contradiction that $\cJ(u_n^+) \geq C$ for some $C \in \R$. Then
$$
o(1) = \frac{C}{\|u_n^+\|^2} \leq \frac{\cJ(u_n^+)}{\|u_n^+\|^2} = \frac12 - \frac{\cI(u_n^+)}{\|u_n^+\|^2} \to -\infty,
$$
which is a contradiction.
\end{Rem}

We are now ready to state our main theorem, which reads as follows.

\begin{Th}\label{T:mainTheorem}
Under the assumptions (\ref{A1})--(\ref{A8}) the functional $\cJ$ has infinitely many geometrically distinct critical points.
\end{Th}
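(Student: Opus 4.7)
The plan is to mimic the fountain-type linking argument of Kryszewski--Szulkin \cite{KS} and Bartsch \cite{Bartsch}, adapted to the dislocation-space setting and to the sign-changing functional $\cI$. Fix an orthonormal basis $(e_k)_{k\geq 1}$ of $X^+$, put $X_k^+=\spann\{e_1,\dots,e_k\}$ and $E_k=X_k^+\oplus X^-$. Using assumptions (\ref{A6}), (\ref{A7}) together with Remark \ref{R:A7}, and (\ref{AX}) to bound the sup-side of the min-max, I would choose $R_k>r$ so that $\cJ\leq a$ on $\partial B_k$ and $\sup_{B_k}\cJ<+\infty$, where $B_k=\{u\in E_k:\|u\|\leq R_k\}$. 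Define the family of admissible, odd deformations fixing the boundary,
\[
\Gamma_k=\{h\in \cH(B_k,\cJ):h|_{\partial B_k}=\id\},
\]
and set $c_k:=\inf_{h\in\Gamma_k}\sup_{u\in B_k}\cJ(h(u))$. An intersection/linking argument based on the Krasnoselskii genus (as in \cite[Thm.\ 3.4]{KS}) ensures $h(B_k)\cap \cS^+_r\neq\emptyset$ for every $h\in\Gamma_k$, hence $c_k\geq b$ by (\ref{A5}), while $c_k<+\infty$ thanks to (\ref{AX}).

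To show $c_k\to\infty$, I would refine the min-max using a pseudo-index with respect to the infinite co-dimensional spaces $Y_j=\overline{\spann\{e_j,e_{j+1},\dots\}}$: property (\ref{A7}) and Remark \ref{R:A7} provide the Bartsch-type lower bound $c_k\geq b_k$ with $b_k\to\infty$. The values $c_k$ are then shown to be critical through a $\tau$-continuous deformation lemma: for every $c\geq b$ and small $\varepsilon>0$ there must exist $\eta\in \cH$ that pushes $\cJ^{c+\varepsilon}\cap \cJ_{\lowlevel}$ into $\cJ^{c-\varepsilon}$ away from any prescribed $G$-invariant $\tau$-neighbourhood of $K_c:=\critJ\cap\cJ^{-1}(c)$. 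Such $\eta$ is produced, as in \cite{KS} and Remark \ref{R:admissibleFlow}, by integrating an odd, $\tau$-locally $\tau$-Lipschitzian, locally Lipschitzian and $\tau$-locally finite-dimensional pseudo-gradient for $\cJ$, whose flow is globally defined on $[0,1]$ thanks to the uniform norm bound $M_\beta$ from (\ref{A8}).

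The main obstacle—and where the argument departs from \cite{KS}—is that, once $\cI$ is allowed to change sign, $\cJ$ need not be $\tau$-upper-semicontinuous, so the classical construction of the pseudo-gradient via sublevel sets fails verbatim. My plan to circumvent this is to replace $\tau$-upper-semicontinuity by a quantitative compactness-modulo-$G$ statement for Palais--Smale sequences: by (\ref{A8}) every $(PS)_c$-sequence is bounded, Corollary \ref{cor:1} yields a profile decomposition into critical orbits $w^1,\dots,w^K$, and Corollary \ref{C:Tintarev_in_norm} makes the remainder vanish in norm. If the conclusion failed, the finitely many critical orbits below level $\beta$ would form a finite set $\cA\subset \critJ$, and the discreteness property would trap the translates $\sum_k g_n^k w^k$ inside the discrete set $[\cA,K_\beta]$, producing the needed uniform lower bound $\|\nabla\cJ(u)\|\geq \delta>0$ outside $U$, from which the pseudo-gradient can then be built directly. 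With the deformation lemma in place, the standard iteration shows $\gamma(K_c)\geq j$ whenever $c_k=\cdots=c_{k+j-1}$ (cf.\ \cite[Thm.\ 4.1]{KS}); combined with $c_k\to\infty$, this yields infinitely many geometrically distinct critical orbits of $\cJ$ and completes the proof.
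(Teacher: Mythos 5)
Your overall scaffolding (profile decomposition of bounded PS sequences, Corollaries \ref{cor:1} and \ref{C:Tintarev_in_norm}, the contradiction hypothesis of finitely many orbits to get a uniform gradient bound away from a discrete set, and an odd, $\tau$-admissible pseudo-gradient flow) matches the paper. But the min-max architecture has a genuine gap: you claim that a Bartsch-type pseudo-index over the tails $Y_j=\overline{\spann\{e_j,e_{j+1},\dots\}}$ gives $c_k\geq b_k\to\infty$ ``by (\ref{A7}) and Remark \ref{R:A7}''. Assumption (\ref{A7}) only controls $\cI$ on \emph{finite-dimensional} subspaces of $X^+$ and supplies the sup-side of the linking geometry (the radii $R_k$); the fountain lower bound $b_k\to\infty$ requires tail estimates of the type $\sup\{\cI(u):u\in Y_j,\ \|u\|=r_j\}=o(r_j^2)$, which come from compactness properties of the embedding in concrete applications and are nowhere among (\ref{A1})--(\ref{A8}). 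In the abstract dislocation-space setting the levels need not be unbounded, and the paper never proves $c_k\to\infty$; the genuinely hard case in its proof is precisely when the min-max values $c_k$ stay below $\beta+1$, and this is resolved not by unboundedness of levels but by a second, subadditive pseudoindex $\gamma^*_Y$ relative to $Y=\cJ^{\beta+2}$ together with a deformation that avoids a symmetric $\tau$-open set $\cN$ of genus $1$ built around $[\cF^+,\ell]$ via the discreteness property (Lemma \ref{L:deformationLemma}(i), Lemma \ref{L:pseudo_2_properties}(iv)). Your concluding step, ``$\gamma(K_c)\geq j$ whenever $c_k=\dots=c_{k+j-1}$'', also does not go through verbatim: $K_c$ is a union of noncompact $G$-orbits in a strongly indefinite problem, so the classical genus estimate for neighborhoods of $K_c$ is unavailable unless one already assumes finitely many orbits — which is exactly the contradiction framework the paper uses instead, making the genus-of-$K_c$ iteration both unjustified and unnecessary.

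A second, more technical point: global existence on $[0,1]$ of the deformation flow does not follow ``thanks to the uniform norm bound $M_\beta$ from (\ref{A8})''. The field is built from $w(u)=2\nabla\cJ(u)/\|\nabla\cJ(u)\|^2$, which blows up near the critical set, so boundedness of trajectories does not prevent finite-time breakdown. The paper's Claim in Step II rules this out by showing that blow-up of $\|V\|$ along a trajectory forces the flow either to enter the $\psi$-cutoff region near $\critJ$ (Case 1) or to jump between the disjoint cylinders $X^-\oplus B_{X^+}(z,\mu/8)$, $z\in[\cF^+,\ell]$, in arbitrarily short time (Case 2), both contradictions resting on Lemma \ref{L:PS-approximation} and the discreteness property \eqref{E:infimum}. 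If you repair the level-unboundedness claim by switching to the paper's two-pseudoindex contradiction argument, this flow-globality argument is the remaining piece you would still need to supply.
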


Before starting the proof, some comments are in order.

\begin{Rem}
In (\ref{A4}), we assume that the functional $\cJ$ already has at least one nontrivial critical point. Hence, the above theorem is purely of multiplicity-type. However, if we replace (\ref{A4}) by assumptions that provide the existence of a Palais-Smale-type sequence, we are still able to obtain the result. For this version of the theorem, see Appendix \ref{appendix}.
\end{Rem}

\begin{Rem}
Note that, under our assumptions, the functional is not necessarily $\tau$-upper-semicontinuous, so we cannot directly apply the results from \cite{KS}. Similarly, if instead of $\tau$ we consider $\widetilde{\tau}$ being a product of the strong topology in $X^+$ and weak topology in $X^-$, $\cJ$ is still not necessarily $\widetilde{\tau}$-upper-semicontinuous, so the results in \cite{BartschDing} do not apply. Such conditions in \cite{BartschDing, KS} are implied by the positivity of the nonlinear term, and have been replaced here by the weak-to-weak* continuity and (\ref{GWC}) property of $\cI'$, see (\ref{A2}). Instead of the classical linking geometry, we require (\ref{A5}). Assumptions (\ref{A6}) and (\ref{A7}) are quite standard, in the spirit of assumptions (I4), (I7) in \cite{MSS, BiegMed2}.
\end{Rem}

Hereafter, we work under assumptions (\ref{A1})--(\ref{A8}). First, we formulate a couple of technical lemmas and then prove the main theorem. From now on, we denote by $P$ and $Q$ the orthogonal projections onto $X^-$ and $X^+$, respectively. Moreover, for a given norm $\| \cdot \|$ on $X$, $\|u - A \|$ denotes the distance of $u \in X$ from the subset $A \subset X$.

We will proceed by contradiction, i.e. assuming that 
\begin{equation}\label{F-finite}
\critJ \mbox{ consists of finitely many orbits}
\end{equation}
and denoting by $\cF$ a finite set of arbitrarily chosen representatives of these orbits. Since $\cJ$ is odd we may replace $\cF$ by $\cF \cup (-\cF)$, and therefore we can assume that $\cF=-\cF$. We will use the notation $\cF^+=Q\cF=\{u^+\,:\, u\in \cF\}$.

We start our reasoning by defining the number $\beta$ such that the superlevel set $\cJ_{\beta}$ is away from the finite set $\cF$ in the norm $\triple{\cdot}$. Recall that the level sets of $\cJ$ are $G$-invariant therefore we get also that $\cJ_{\beta}$ is $\tau$-away from $\critJ$.
\begin{Lem}\label{L:awayFromCrit}
    There is a number $\beta_0$ such that for $u\in \cJ_{\beta_0}$ we have $\triple{u-\cF}\geq 1$.
\end{Lem}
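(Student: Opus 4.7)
The plan is to argue by contradiction: suppose no such $\beta_0$ works. Then for each $n \in \N$ there exists $u_n \in X$ with $\cJ(u_n) > n$ and $\triple{u_n - \cF} < 1$. Since $\cF$ is finite, the distance $\triple{u_n - \cF}$ is realized by some $v_n \in \cF$, and after passing to a subsequence I may take $v_n = v \in \cF$ to be fixed, so that $\triple{u_n - v} < 1$ for all $n$.

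From the built-in inequality $\|w^+\| \leq \triple{w}$ in the definition of the triple norm, I immediately get $\|u_n^+ - v^+\| < 1$, and hence $\|u_n^+\| \leq \|v^+\| + 1$, so the positive-part sequence $(u_n^+)$ is bounded. However, $\triple{\cdot}$ only bounds the $X^-$-component in the weak sense $\sum 2^{-k} |\langle \cdot, e_k \rangle|$, so it does not a priori give boundedness of $\|u_n^-\|$, and hence $(u_n)$ itself need not be bounded. This asymmetry is the one obstacle, and it is precisely where assumptions (\ref{A6}) and (\ref{AX}) are tailored to intervene.

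To finish, I split into cases according to whether $(\|u_n^-\|)$ is bounded. If it is unbounded, extract a subsequence with $\|u_n^-\| \to \infty$; since $(\|u_n^+\|)$ is bounded, assumption (\ref{A6}) forces $\cJ(u_n) \to -\infty$, contradicting $\cJ(u_n) > n \to +\infty$. If instead $(\|u_n^-\|)$ is bounded, then $(u_n)$ is bounded in $X$, and assumption (\ref{AX}) yields a uniform upper bound on $\cJ(u_n)$, again contradicting $\cJ(u_n) > n$. Either alternative gives a contradiction, so such a $\beta_0$ exists.
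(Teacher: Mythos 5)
Your proposal is correct and relies on exactly the same ingredients as the paper's proof: the inequality $\|u^+\|\leq\triple{u}$ to control the $X^+$-component within $\triple{\cdot}$-distance $1$ of the finite set $\cF$, assumption (\ref{A6}) to handle the case of unbounded $\|u_n^-\|$, and (\ref{AX}) to bound $\cJ$ on the remaining bounded set. The only difference is presentational: the paper argues directly, defining $\beta_0$ as a finite maximum of suprema over the $\triple{\cdot}$-balls around points of $\cF$, while you run the same argument contrapositively with a sequence.
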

\begin{proof}
    Let $u\in\cF$ and consider a set $B=\{v\in X\,:\,\triple{v-u}< 1\}$. Since $\|v^+-u^+\|\leq \triple{v-u}< 1$  the norm $\|v^+\|$ is bounded for $v\in B$. By assumption (\ref{A6}), there is $R>0$ such that if $\|v^-\|>R$ and $v \in B$, then $\cJ(v)<0$. Put $s_{u}=\sup \{\cJ(v)\,:\, v \in B, \,\|v^-\|\leq R\}$. Thanks to (\ref{AX}), $s_u$ is finite and we can finally define
    \[
        \beta_0=\max_{u\in\cF} s_u.
    \]
    
\end{proof}

Note that $b>a\geq \cJ(0)=0$, and fix $\alpha>1$ and $\beta>\max\{b,\beta_0\}>0$ such that
\[
\beta>\max\limits_{\cF} \cJ\geq\min\limits_{\cF} \cJ>-\alpha+1.
\]

Since $(X,G)$ is a dislocation space, $\cI$ satisfies the condition (\ref{GWC}), and (\ref{A8}) holds, the assumptions of Corollaries \ref{cor:1} and \ref{C:Tintarev_in_norm} are satisfied,  giving rise to the following result.

\begin{Lem}\label{L:PS-approximation}
    Under the assumptions of Theorem \ref{T:mainTheorem}, for any $c\in\bR$ there is $\ell_c \geq \frac{M_c^2}{\eta}$ such that for every $(PS)_c$ sequence there holds
    \[
        0\leq \triple{u_n-[\cF,\ell_c]} \leq \|u_n-[\cF,\ell_c]\| \to 0,
    \]
    where the lower bound $\frac{M_c^2}{\eta}$ comes from Corollary \ref{cor:1} and (\ref{A8}).
\end{Lem}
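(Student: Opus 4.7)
The plan is to combine the profile decomposition in dislocation spaces (Theorem \ref{T:tintarev}) and its Palais--Smale refinements (Corollaries \ref{cor:1} and \ref{C:Tintarev_in_norm}) with the standing contradiction hypothesis \eqref{F-finite}. All the analytic work is already contained in those corollaries; what remains is a bookkeeping argument that replaces each bubble $w^k$ by an element of $G\cdot\cF$.

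First I would fix $c\in\bR$ and an arbitrary $(PS)_c$ sequence $(u_n)$. Assumption (\ref{A8}) yields $\limsup_{n\to\infty}\|u_n\|\leq M_c$, so the sequence is bounded. Passing to a subsequence, Theorem \ref{T:tintarev} produces $(g_n^k)\subset G$ and profiles $w^k\in X\setminus\{0\}$ for $1\leq k<K+1$. Corollary \ref{cor:1}(i) (which uses (\ref{A2})) ensures each $w^k\in\critJ\setminus\{0\}$, while Corollary \ref{cor:1}(ii) (using (\ref{A8}) and the definition of $\eta$) gives $K\leq M_c^2/\eta^2<\infty$. Finally, Corollary \ref{C:Tintarev_in_norm} (which relies on the (\ref{GWC})-property) upgrades the decomposition to norm convergence:
$$
\left\|u_n-\sum_{k=1}^{K}g_n^k w^k\right\|\longrightarrow 0.
$$

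Next I would invoke \eqref{F-finite}: since $\critJ$ is assumed to consist of finitely many $G$-orbits with representatives in $\cF$, for each $k$ there exist $h_k\in G$ and $u_k^\ast\in\cF$ with $w^k=h_k u_k^\ast$. Setting $\widetilde g_n^k:=g_n^k h_k\in G$, one has
$$
\sum_{k=1}^{K}g_n^k w^k=\sum_{k=1}^{K}\widetilde g_n^k u_k^\ast\in [\cF,K]\subset [\cF,\ell_c],
$$
whenever $\ell_c\geq K$. Choosing $\ell_c:=\max\{2,\lceil M_c^2/\eta^2\rceil\}$, valid uniformly over $(PS)_c$ sequences, gives $\|u_n-[\cF,\ell_c]\|\to 0$ along the extracted subsequence. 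Because the same argument applies to every subsequence of $(u_n)$, the nonnegative real sequence $\|u_n-[\cF,\ell_c]\|$ itself converges to $0$. The triple-norm bound $\triple{\cdot}\leq\|\cdot\|$ then gives the left-hand inequality in the statement.

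The step I expect to require a little care is the borderline case $K=0$, in which the profile decomposition degenerates to $u_n\stackrel{G}{\rightharpoonup}0$ and Corollary \ref{C:Tintarev_in_norm} yields $\|u_n\|\to 0$ directly; here one still needs $0\in[\cF,\ell_c]$, which follows because $\cF=-\cF$ gives $0=u+(-u)\in[\cF,2]$, justifying the choice $\ell_c\geq 2$. Apart from this small bookkeeping point, the lemma is essentially a reformulation of the quantitative profile decomposition under the assumption that only finitely many critical orbits exist.
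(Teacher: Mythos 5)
Your proposal is correct and follows essentially the same route as the paper: boundedness via (\ref{A8}), the profile decomposition of Theorem \ref{T:tintarev} refined by Corollaries \ref{cor:1} and \ref{C:Tintarev_in_norm}, and then the observation that under \eqref{F-finite} each profile lies in $G\cdot\cF$, so the sum of bubbles belongs to $[\cF,\ell_c]$ for $\ell_c\geq K$. Your extra bookkeeping (the subsequence argument, the replacement $w^k=h_ku_k^\ast$, and the $K=0$ case using $\cF=-\cF$ to get $0\in[\cF,2]$) only makes explicit what the paper leaves implicit.
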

\begin{proof}
From assumption (\ref{A8}), $(u_n)$ is bounded. Applying Theorem \ref{T:tintarev} and Corollaries \ref{cor:1}, and \ref{C:Tintarev_in_norm} (in particular, from \eqref{F-finite}, $K < \infty$), we get that
$$
\left\| u_n  - \sum_{k=1}^K g_n^k w^k \right\| \to 0
$$
for some $g_n^k \in G$ and critical points $w^k \in \cF$. Clearly $\sum_{k=1}^K g_n^k w^k \in [\cF, \ell]$ for $\ell \geq K$.
\end{proof}

We are going to prove the main part of our argument, i.e. the suitable deformation lemma.
\begin{Lem}\label{L:deformationLemma}
    Let $\xi > \beta+2$. There exist $1>\varepsilon>0$, a symmetric $\tau$-open set $\cN\subset X$ with $\gamma(\overline{\cN})=1$, and a map $h\in \cH(\cJ^{\xi})$ such that
    \begin{itemize}
        \item[(i)] for any $d\in[b,\xi-1]$, $h(\cJ^{d+\varepsilon}\setminus\cN)\subset \cJ^{d-\varepsilon}$;
        \item[(ii)] moreover, if $d\geq \beta+1$, then $h(\cJ^{d+\varepsilon})\subset \cJ^{d-\varepsilon}$.
    \end{itemize}   
    \end{Lem}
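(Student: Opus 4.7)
\emph{Plan.} The approach will adapt the classical Kryszewski--Szulkin deformation argument \cite{KS} to the present setting, where $\cJ$ fails to be $\tau$-upper-semicontinuous. I shall construct three ingredients: a symmetric avoiding set $\cN$ of genus one around $\cF$; a uniform positive lower bound on $\|\cJ'\|$ on the relevant level band away from a small norm-neighborhood of the critical set; and an admissible flow generated by a $G$-equivariant pseudo-gradient whose time-$T$ map realizes the required $2\varepsilon$-descent of $\cJ$ on $\cJ^\xi$.

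First, applying (\ref{A8}) at the threshold $\xi+1$ together with Lemma \ref{L:PS-approximation} will produce an integer $\ell:=\ell_{\xi+1}$ such that every Palais--Smale sequence at a level $\leq \xi+1$ is eventually norm-close to $[\cF,\ell]$. The discreteness property of $(X,G)$ will then guarantee that $[\cF,\ell]$ is a discrete subset of $X$ with some separation $\rho>0$. I will take $\cN$ to be a disjoint union of small $\tau$-open balls about each element of $\cF$, chosen symmetric (using $\cF=-\cF$) and bounded away from $0$. Since $\cF$ is finite and $0\notin\cF$, a standard odd continuous map $\overline{\cN}\to\{-1,+1\}\subset \R\setminus\{0\}$ will show that $\gamma(\overline{\cN})=1$.

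Next, an argument by contradiction, combining Lemma \ref{L:PS-approximation} with (\ref{A8}), will yield constants $\delta_0,\kappa>0$ such that $\|\cJ'(u)\|\geq \kappa$ whenever $\cJ(u)\in[b-1,\xi+1]$ and $\|u-[\cF,\ell]\|\geq\delta_0$; otherwise one would extract a $(PS)_c$-sequence at some $c\in[b-1,\xi+1]$ remaining away from $[\cF,\ell]$, contradicting the lemma. From this bound and a standard $\tau$-locally finite partition of unity (cf.~\cite[Sec.~4]{KS}), I will build a $G$-equivariant vector field $V:X\to X$ which is locally Lipschitz, $\tau$-locally $\tau$-Lipschitz, and $\tau$-locally finite-dimensional, vanishing on a small norm-neighborhood of $G\cdot[\cF,\ell]$ and on $\cJ^{b-1}$, and satisfying $\cJ'(u)V(u)\geq \tfrac{\kappa}{2}\min\{1,\|\cJ'(u)\|\}$ elsewhere. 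By Remark \ref{R:admissibleFlow}, the time-$T$ map of the flow of $-V$ restricted to $\cJ^\xi$ will define an odd admissible map $h\in\cH(\cJ^\xi)$ for appropriate $T$ and $\varepsilon$.

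It remains to verify (i) and (ii). For (ii), the choice of $\beta$ places all critical values strictly below it, so for $d\geq\beta+1$ and $\varepsilon<1$ the band $[d-\varepsilon,d+\varepsilon]$ contains no critical values; the gradient bound will then apply throughout the band and the flow will unconditionally descend by $2\varepsilon$. For (i), given $u\in\cJ^{d+\varepsilon}\setminus\cN$ with $d\in[b,\xi-1]$, the $G$-equivariance of $V$ will reduce the analysis near each $g\cF$ to the analysis near $\cF$, while the gradient bound will force the flow to traverse the band $\cJ_{d-\varepsilon}^{d+\varepsilon}$ in bounded time before it could be trapped by any critical orbit. The expected main obstacle is precisely this last step: reconciling the genus-one requirement on $\cN$ (a neighborhood of $\cF$ alone) with the need to descend on the full level band away from every $G$-translate of the critical orbits. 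Overcoming it will require a careful quantitative coordination between $\rho$, $\kappa$, the descent time $T$, and the size of $\varepsilon$, exploiting the $G$-equivariance of $V$ in an essential way.
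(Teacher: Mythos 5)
There is a genuine gap, and it is exactly the point you defer to ``careful quantitative coordination'': your set $\cN$, a union of small balls around the finite set $\cF$ of representatives only, cannot give property (i). Property (i) quantifies over \emph{all} $u\in\cJ^{d+\varepsilon}\setminus\cN$, and for $d\in[b,\beta]$ the band $\cJ_{d-\varepsilon}^{d+\varepsilon}$ contains whole critical orbits: if $w\in\cF$ has $\cJ(w)\in(d-\varepsilon,d+\varepsilon]$ and $g\neq\mathrm{id}$, then $gw$ (and points near it, and near sums of translates) lies outside your $\cN$ but is a critical point (resp.\ has arbitrarily small $\|\cJ'\|$), so no lower bound $\|\cJ'\|\geq\kappa$ holds there --- indeed your own bound is stated in terms of the distance to $[\cF,\ell]$, not to $\cF$ --- and with your field vanishing near $G\cdot[\cF,\ell]$ one gets $h(gw)=gw\notin\cJ^{d-\varepsilon}$. $G$-equivariance of the vector field cannot rescue this: it only says the flow near $gw$ behaves like the flow near $w$, where it likewise fails to descend; that is precisely why a neighborhood must be excised, and your $\cN$ excises the wrong (too small) set. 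The missing idea, which is the heart of the paper's Step I, is to take $\cN$ not around $\cF$ but around the $X^+$-projections of all $\ell$-fold sums of translates: $\cN=X^-\oplus\bigcup_{z\in[\cF^+,\ell]\setminus\{0\}}B_{X^+}(z,\mu/4)$, with $\mu$ given by the discreteness property applied to $\cF^+$. These cylinders are disjoint, $\tau$-open (on $X^+$ the $\tau$-topology is the norm topology), symmetric, avoid $0$, and the closure still has genus $1$; the cylinder over $z=0$, which cannot be included without destroying the genus bound, is harmless because by (\ref{A5}) and $\mu<r_0$ one has $\{\|u^+\|\leq\mu/8\}\subset\cJ^{a}\subset\cJ^{b-\varepsilon}$, so it never meets the band. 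This is how genus one and descent away from every $G$-translate are reconciled, and it is not a quantitative tuning of $\rho,\kappa,T,\varepsilon$ but a different choice of $\cN$.

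Two further points in your construction would also need repair. First, making the vector field vanish on a norm-neighborhood of all of $G\cdot[\cF,\ell]$ is too much: sums of widely separated translates are not critical and may have energy above $\beta+1$, so killing the field there endangers property (ii), where no exceptional set is allowed; the paper's cutoff $\psi$ vanishes only near $\critJ$, and (ii) is then obtained from Lemma \ref{L:awayFromCrit}, which guarantees $\triple{u-\critJ}\geq 1$ on $\cJ_{\beta}$ via (\ref{A6}) and (\ref{AX}) --- not from the (false in general) claim that the gradient is bounded below on the whole band. Second, you take for granted that the time-$T$ map exists on $[0,T]$ and that $h(\cJ^\xi)$ is closed; since the pseudo-gradient $w(u)=2\nabla\cJ(u)/\|\nabla\cJ(u)\|^2$ blows up along Palais--Smale sequences, global existence of the flow requires the argument of the paper's Claim (Cases 1 and 2), which again uses Lemma \ref{L:PS-approximation}, the weak-to-weak* continuity of $\cJ'$, and the $\mu$-separation of the cylinders; your proposal does not address this.
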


\begin{proof} The proof consists of three steps.

\textbf{Step I. Construction of $\cN$.} 
     The set $\cF$ is assumed to be finite, so we can fix $\ell:= \ell_{\xi+2}$ from Lemma \ref{L:PS-approximation}. Since $(X,G)$ has a~discreteness property and $\cF^+$ is also finite there exists $\mu>0$ such that $\mu<\min\{r_0,1\}$, with $r_0$ given in (\ref{A5}), and
    \begin{equation}\label{E:infimum}
        \inf\{\|u-u'\|\,:\, u,u'\in[\cF^+,\ell],\,u\neq u'\}\geq \mu.
    \end{equation}
    We will define $\cN$ as a neighborhood of $[\cF,\ell]\setminus\{0\}$. For any $z\in[\cF^+,\ell]$ we take an open ball in $X^+$ centered at $z$ with small radius $\frac{\mu}4$. From \eqref{E:infimum} these balls are disjoint. Put
    \[
	\cN:= \bigcup_{z \in \left[\cF^+,\ell\right] \setminus \{0\}} \left(X^- \oplus B_{X^+}\left(z,\dfrac{\mu}{4}\right)\right)=X^-\oplus\bigcup_{z \in \left[\cF^+,\ell\right] \setminus \{0\}} B_{X^+}\left(z,\dfrac{\mu}{4}\right).
\]
Since $\cF^+$ and $\left[\cF^+,\ell\right]$ are symmetric, it is clear that $\cN$ is a symmetric set. On $X^+$ the $\tau$-topology is the strong one, so $\cN$ is a $\tau$-open set. Since $\overline{\cN}$ can be contracted to a finite collection of points, we have $\gamma(\overline{\cN})=1$ (see Remark 7.3 in \cite{Rab86}). Note that $0\notin \cN$ and therefore $\gamma(\overline{\cN})$ is well defined.

Consider the set
\begin{equation}\label{E:N0-def}
    \cN_8:=X^-\oplus\bigcup_{z \in \left[\cF^+,\ell\right]} B_{X^+}\left(z,\dfrac{\mu}{8}\right)\subset\cN \cup \left( X^-\oplus B_{X^+} \left(0,\dfrac{\mu}{8}\right)\right).
\end{equation}
In view of Lemma \ref{L:PS-approximation}, there is $\delta>0$ such that $\|\nabla \cJ(u)\|\geq \delta$ provided $u\in \cJ_{\lowlevel}^{\xi+2}\setminus\cN_8$. 

\textbf{Step II. Vector field and flow.} Now we construct the vector field that we use to define the map $g$ as a generated flow. Firstly, let $w:\lJ{\lowlevel}{\xi+2}\setminus \critJ\rightarrow X$ be given by
\[
w(u)=\frac{2\nabla \cJ(u)}{\|\nabla \cJ(u)\|^2}.
\]
Since the map $\nabla \cJ$ is weak-to-weak* sequentially continuous, the function $v\mapsto \langle\nabla\cJ(v),w(u)\rangle$ is $\tau$-continuous on $\cJ_{-\alpha}^{\xi+2}$ for any $u \in \lJ{\lowlevel}{\xi+2}\setminus \critJ$. Indeed, let $v_n\stackrel{\tau}{\to} v$. Then $(Qv_n)$ is bounded. If $(Pv_n)$ is unbounded, thanks to \eqref{A6}, $\cJ(v_n) \to -\infty$ along a subsequence, which is a contradiction. Hence, $(Pv_n)$ is bounded as well. Then $v_n\weakto v$ and $\langle \nabla \cJ(v_n),w(u)\rangle\to\langle \nabla \cJ(v),w(u)\rangle.$ Moreover,
\[
\langle \nabla\cJ(u),w(u)\rangle =2,
\]
thus there exists a $\tau$-open neighborhood $U_u\subset X$ of $u\in \lJ{\lowlevel}{\xi+2}\setminus \critJ $, such that 
\begin{equation}\label{E:U_sets-properties}
    \triple{v-u}<\frac{1}{16}\mu\quad\text{and}\quad\langle \nabla\cJ(v),w(u)\rangle >1
\end{equation}
for $v\in  U_u$. 

This way we have constructed the $\tau$-open covering $\{U_u\}$ of $\lJ{\lowlevel}{\xi+2}\setminus\critJ$. Let $\cU := \bigcup U_u$. Then $\{U_u\}$ is an open covering of a metric space $(\cU, \tau)$. Hence, there is a locally finite refinement $\{M_j\}_{j\in J}$ and denote by $\{\lambda_j\}_{j\in J}$ the subordinated $\tau$-Lipschitzian partition of unity, $\lambda_j : \cU \rightarrow [0,1]$.
For any $j\in J$ we arbitrarily choose $u_j\in \lJ{\lowlevel}{\xi+2}\setminus\critJ$ such that $M_j\subset U_{u_j}$.

Note that $\bigcup_{j\in J} M_j$ does not need to be a symmetric set, therefore we take a $\tau$-open and symmetric set $M$ such that $\lJ{\lowlevel}{\xi+2}\setminus\critJ\subset M\subset \bigcup_{j\in J} M_j = \cU$. On this set, we define a~map $V:M\to X$ as
\[
V(u):=\frac 12 \left[ \widetilde{V}(u)-\widetilde{V}(-u)\right],\quad\text{where }\widetilde{V}(u)=\sum_{j\in J} \lambda_j(u)w(u_j).
\]
The map $V$ has the following properties.
\begin{enumerate}
    \item $V$ is odd.
    \item Since $\nabla\cJ$ is odd, we have 
    \[
    \langle \nabla\cJ(u),V(u)\rangle=\frac 12\left[ \langle \nabla\cJ(u),\widetilde{V}(u)\rangle + \langle \nabla\cJ(-u),\widetilde{V}(-u)\rangle \right].
    \]
    Moreover, for $u\in M$  we can compute
    \[
    \langle \nabla \cJ(u),\widetilde{V}(u)\rangle=\sum_{j\,:\,u\in M_j\subset U_{u_j}} \lambda_j(u)\langle \nabla\cJ(u),w(u_j)\rangle>\sum_{j\,:\,u\in M_j\subset U_{u_j}} \lambda_j(u)=1.
    \]
    Therefore $\langle \nabla\cJ(u),V(u)\rangle>1$ for $u\in\lJ{\lowlevel}{\xi+2}\setminus\critJ\subset M$.
    \item For every $u\in M$ there is $\tau$-open neighborhood $L_u\subset M$ such that the set $\{j\in J\,:\, M_j\cap L_u \neq \emptyset\}$ is finite. Hence $V(L_u)$ is contained in a finite-dimensional subspace of $X$.
    \item Since the functions $\lambda_j$, $j\in J$, are $\tau$-Lipschitzian and we have $\triple{u-v}\leq \| u-v\|$, we see that $V$ is $\tau$-locally $\tau$-Lipschitzian and locally Lipschitzian.
\end{enumerate}

We define two even, $\tau$-locally $\tau$-Lipschitzian (and therefore locally Lipschitzian also) functions $\theta, \psi:X\to [0,1]$ that vanish outside $\lJ{\lowlevel}{\xi+2}$ and close to $\critJ$, respectively. More precisely: 
\begin{itemize}
    \item $\theta(u)=1$ on $\lJ{\lowlevel+1}{\xi+1}$ and $\theta(u)=0$ on $\cJ_{\xi+2}\cup \cJ^{-\alpha}$;
    \item $\psi(u)=0$ if $\triple{u-\critJ}\leq \frac 1{10}\mu$ and $\psi(u)=1$ if $\triple{u-\critJ}\geq \frac 1{8}\mu$.
\end{itemize} 
Define $\Psi : X \rightarrow X$ by $\Psi:=\theta\psi V$ and consider the Cauchy problem
\begin{equation}\label{E:CP1}
\frac{\dd\eta}{\dd t}=-\Psi(\eta),\quad \eta(u,0) = u \in \cJ^{\xi+1}.
\end{equation}
The map $\Psi$ is locally Lipschitzian, hence the problem \eqref{E:CP1} has a unique solution defined on the open interval $(\omega_-(u),\omega_+(u))$ containing $0$. Moreover the map $\Psi$ is odd, therefore $\omega_{\pm}(u)=\omega_{\pm}(-u)$ and $\eta(-u,t)=-\eta(u,t)$ for $u\in \cJ^{\xi+1}$ and $t\in (\omega_-(u),\omega_+(u))$. 

\textbf{Claim.} The solution of the Cauchy problem \eqref{E:CP1} is global for any $u\in \cJ^{\xi+1}$, i.e. $\omega_{\pm}(u)=\pm\infty$.

\textbf{Proof of Claim.} 
We will prove that $\omega_+(u)=\infty$, the proof of the second case follows in a similar way. We argue by contradiction. Suppose that $\omega_+(\tu)$ is finite for some $\tu\in\J^{\xi+1}$.

By standard argument, the map $\Psi$ cannot be bounded along trajectories. Therefore, there exists an increasing sequence $(t_m)$ such that $t_m\to \omega_{+}(\tu)$, $\Psi(\eta(\tu,t_m))\neq 0$ and $\| V(\eta(\tu,t_m))\|\to\infty$. Put $z_m=\eta(\tu,t_m)$. Since the sum in the definition of the map $V$ is locally finite, for all $m$ we find an index $j(m)\in J$ such that $\|w(u_{j(m)})\|\to\infty$ and $\lambda_{j(m)}(\eta(\tu,t_m))\neq 0$  (taking a subsequence or switching from $u$ to $-u$ if necessary). It means that 
\[
\frac{2}{\|\nabla \cJ(u_{j(m)})\|}=\|w(u_{j(m)})\|\to\infty \Leftrightarrow \|\nabla \cJ(u_{j(m)})\|\to 0 \Leftrightarrow \cJ'(u_{j(m)})\to 0.
\]

Since $(u_{j(m)})\subset \lJ{\lowlevel}{\xi+2}\setminus\critJ$ we have $(\cJ(u_{j(m)}))\subset [-\alpha,\xi+2]$ and therefore $(u_{j(m)})$ is a $(PS)$-sequence. By Lemma \ref{L:PS-approximation} we get $\triple{u_{j(m)}-[\cF,\ell]}\to 0$ and as a consequence $\triple{Qu_{j(m)}-[\cF^+,\ell]}\to 0$.

Note that $\lambda_{j(m)}(\eta(\tu,t_m))\neq 0$ means $\eta(\tu,t_m)\in M_{j(m)}\subset U_{u_{j(m)}}$ and in the view of \eqref{E:U_sets-properties} we have $\triple{\eta(\tu,t_m)-u_{j(m)}}<\frac{\mu}{16}$.

Now, two cases can occur, and both of which must be ruled out.

\emph{Case 1.} Suppose there is $z\in[\cF^+,\ell]$ such that $u_{j(m)}\in X^-\oplus B_{X^+}\left(z,\frac{\mu}{16}\right)$ for almost all $m$. Then $Qu_{j(m)}\to z$. Moreover, since $\|Qu_{j(m)}\|$ is bounded and $\cJ(u_{j(m)})$ is bounded from below, due to the assumption (\ref{A6}) we obtain that the sequence $(Pu_{j(m)})_m$ is bounded,
and therefore weakly convergent to $y\in X^-$. Finally, $u_{j(m)}\stackrel{\tau}{\weakto} y+z \in X$. The map $\cJ'$ is sequentially weak-to-weak* continuous, therefore $\cJ'(y+z)=\lim_{m\to\infty} \cJ'(u_{j(m)})=0 $. Hence $y+z\in \critJ$ and
\begin{align*}
\triple{\eta(\tu,t_m)-\critJ} &\leq \triple{\eta(\tu,t_m)-u_{j(m)}}+\triple{u_{j(m)}-\critJ} \\
&< \frac{\mu}{16}+\triple{u_{j(m)}-\critJ}<\frac{\mu}{10},
\end{align*}
for almost all $m$. Therefore, by the definition of the map $\psi$ we have $\Psi(\eta(\tu,t_m))=0$, a~contradiction.

\emph{Case 2.} There are infinitely many $z\in[\cF^+,\ell]$ such that $u_{j(m)}$ enters the sets $X^-\oplus B_{X^+}\left(z,\frac{\mu}{16}\right)$. Then, since $\triple{\eta(\tu,t_m)-u_{j(m)}}<\frac{\mu}{16}$,  the sequence $\eta(\tu,t_m)$ enters the sets of the form $X^-\oplus B_{X^+}\left(z,\frac{\mu}{8}\right)$ for infinitely many $z\in [\cF^+,\ell]$. It means that the flow $\eta(\tu, t)$ switches among disjoint sets $X^-\oplus B_{X^+}\left(z,\frac{\mu}{8}\right)\subset \cN_8$ for $t$ arbitrarily close to $\omega_+ (\tu)$.

Put $\cN_{16}:=X^-\oplus\bigcup_{z \in \left[\cF^+,l\right] } B_{X^+}\left(z,\dfrac{\mu}{16}\right)$. Let $t_1<t_2<\omega_+(\tu)$ and $z_1\neq z_2$ be such that 
\[
\eta(\tu,t_1)\in X^-\oplus \overline{B_{X^+}\left(z_1,\frac{\mu}8\right)},\quad \eta(\tu,t_2)\in X^-\oplus \overline{B_{X^+}\left(z_2,\frac{\mu}8\right)}
\]
and $\eta(\tu,t)\notin \cN_8$ for $t\in(t_1,t_2)$. Since $z_1,z_2\in [\cF^+,l]$ due to \eqref{E:infimum} we have 
\[
\|\eta(\tu,t_1)-\eta(\tu,t_2)\|\geq \|z_1-z_2\|-\left(\|z_1-\eta(\tu,t_1)\|+\|z_2-\eta(\tu,t_2)\|\right)\geq \mu -2\cdot\frac{\mu}8=\frac 34 \mu.
\]
On the other hand for $t\in(t_1,t_2)$ there is
\[
\|\widetilde{V}(\eta(\tu,t))\|=\left\|\sum_{j\in J_0} \lambda_j(\eta(\tu,t)) w(u_j)\right\|\leq \sup_{j\in J_0} \|w(u_j)\|,
\]
where the set $J_0$ is finite and, by construction, if $j\in J_0$ then $\triple{u_j-\eta(\tu,t)}<\frac{\mu}{16}$. Therefore, since  $\eta(\tu,t)\notin \cN_8$, $u_j\notin \cN_{16}$. Note, as for $\cN_8$, that there exist $\delta_1>0$ such that $\|\nabla\cJ(u)\|\geq \delta_1$ for $u\in \lJ{-2\alpha}{\xi+2}\setminus \cN_{16}$. Hence
\begin{equation}\label{E:temp}
    \|\widetilde{V}(\eta(\tu,t))\|\leq \sup_{j\in J_0} \|w(u_j)\|= \sup_{j\in J_0} \frac{2}{\|\nabla \cJ(u_j)\|}\leq \frac{2}{\delta_1}.
\end{equation}
Finally, we get
\[
\frac{3}{4}\mu\leq  \|\eta(\tu,t_1)-\eta(\tu,t_2)\|\leq \int_{t_1}^{t_2}\|V(\eta(\tu,s))\|\, ds\leq \frac{2}{\delta_1}(t_2-t_1)
\]
leading to a contradiction, since $t_1$ and $t_2$ may be chosen arbitrarily close to $\omega_+(\tu)$. This concludes the study of the second case and the proof of the claim.

\textbf{Step III. Definition and properties of $h$.}

Since the map $\Psi$ is $\tau$-locally $\tau$-Lipschitzian and $\tau$-locally finite-dimensional, the map $h:\cJ^{\xi}\to X$ given by $h(u):=\eta(u,1)$ is an admissible odd map, see Remark \ref{R:admissibleFlow}. To prove that $\cJ(h(u))\leq \cJ(u)$ we compute
\[
\frac{d}{dt}\cJ(\eta(u,t))=\langle \nabla\cJ(\eta(u,t)),-\Psi(\eta(u,t))\rangle=-[\theta\psi](\eta(u,t))\langle \nabla\cJ(\eta(u,t)),V(\eta(u,t))\rangle \leq 0,
\]
due to the property $(2)$ of the map $V$. It means that $\cJ$ is non-increasing along trajectories and, in particular, $\cJ(h(u))=\cJ(\eta(u,1))\leq \cJ(\eta(u,0)=\cJ(u)$. Note that $h(\cJ^{\xi})\subset \cJ^{\xi}$. To prove that $h(\cJ^{\xi})$ is closed in $X$ consider a sequence $v_m=h(z_m)=\eta(z_m,1)$, where $z_m\in \cJ^{\xi}$ and $v_m\to v$. Applying the continuous map $\eta(\cdot,-1)$ we obtain
\[
z_m=\eta(v_m,-1)\to \eta(v,-1)=:z.
\]
Since the set $\cJ^{\xi}$ is closed, $z\in \cJ^{\xi}$ and finally $v=\eta(z,1)=h(z)$, i.e. $v\in h(\cJ^{\xi})$ and the set $h(\cJ^{\xi})$ is closed. To summarize, $h\in\cH(\cJ^{\xi})$.

Now we will show that $h$ has the properties \textit{(i)} and \textit{(ii)} required in the thesis of Lemma \ref{L:deformationLemma}.
Put 
\[
\varepsilon=\frac 12 \min \left\{\frac 14, b-a, \frac{1}{32} \delta\mu\right\},
\]
where $b$ and $a$ are defined in the assumption (\ref{A5}), $\mu$ comes from \eqref{E:infimum} and $\delta$ is defined below \eqref{E:N0-def}.

\textbf{Property \textit{(i)}} Since $b-\varepsilon> a$, due to assumption (\ref{A5}) and the inequality $\mu<r_0$ we have
\[
\overline{\cN_8}\setminus\cN=\left\{u\,:\, \|u^+\|\leq\frac{\mu}8\right\}\subset \left\{u\,:\, \|u^+\|<r_0\right\}\subset \cJ^{a}\subset \cJ^{b-\varepsilon}.
\]

Take any $d\in[b,\xi-1]$ and suppose there is $\tu\in \cJ^{d+\varepsilon}_{d-\varepsilon}\setminus \cN\subset\cJ^{d+\varepsilon}_{d-\varepsilon}\setminus \overline{\cN_8}$ such that $\eta(\tu,1)\subset\cJ^{d+\varepsilon}_{d-\varepsilon}$. Then $\eta(\tu,t)\in\cJ^{d+\varepsilon}_{d-\varepsilon}\subset \cJ^{\xi+1}_{\lowlevel+1}$ for $t\in[0,1]$, since $d-\varepsilon \geq b-\varepsilon>0$, and therefore $\theta(\eta(\tu,t))=1$.

\noindent If $\eta(\tu,t)\in\cJ^{d+\varepsilon}_{d-\varepsilon}\setminus \cN_8$ for all $t\in[0,1]$, then $\psi(\eta(\tu,t))=1$ and
\[
\begin{split}
2\varepsilon\geq \cJ(\eta(\tu,0))-\cJ(\eta(\tu,1))&=\int_0^1 \langle \nabla \cJ(\eta(\tu,s)),\Psi(\eta(\tu,s))\rangle\, ds=\\
&= \int_0^1 \langle \nabla \cJ(\eta(\tu,s)),V(\eta(\tu,s))\rangle\, ds \geq 1,
\end{split}
\]
a contradiction.

Recall that $\frac{\mu}{4}<r_0$, and by the assumption (\ref{A5}), for $u\in X^-\oplus \overline{B_{X^+}\left(0,\frac{\mu}4\right)}$ we have $J(u)\leq a <b-\varepsilon\leq d-\varepsilon$. Therefore, since $\tu\in \cJ^{d+\varepsilon}_{d-\varepsilon}\setminus \cN$, there holds 
 $\|Q\tu-[\cF^+,\ell]\|\geq \frac{\mu}4$. Moreover $\tu\notin \overline{\cN_8}$. If there is $t_0\in(0,1]$ such that $\eta(\tu,t_0)\in \overline{\cN_8}$ (i.e. $\eta(\tu,t_0)\in X^-\oplus \overline{B_{X^+}\left(z,\frac{\mu}8\right)}$ for some $z\in[\cF^+,\ell]$) and $\eta(\tu,s)\notin \cN_8$ for $s\in[0,t_0)$, then $\Psi(\eta(\tu,s))=V(\eta(\tu,s))$ and we have
\[
    \begin{split}
        \frac{\mu}8&=\frac{\mu}{4}-\frac{\mu}8\leq \|Q\tu-z\|- \|Q\eta(\tu,t_0)-z\|\leq \|Q\tu-Q\eta(\tu,t_0)\| \\
        &\leq \|\tu-\eta(\tu,t_0)\|\leq \int_0^{t_0} \|V(\eta(\tu,s))\|\,ds\leq \frac{2}{\delta}t_0,
    \end{split}
\]
where the last inequality comes exactly the same way as in \eqref{E:temp}. It gives us the condition $t_0\geq \frac{\delta\mu}{16}>2\varepsilon$. On the other hand
\[
    \begin{split}
        2\varepsilon\geq \cJ(\eta(\tu,0)) - \cJ(\eta(\tu,t_0)) &=\int_0^{t_0} \langle \nabla \cJ(\eta(\tu,s)),\Psi(\eta(\tu,s))\rangle\, ds=\\
        &= \int_0^{t_0} \langle \nabla \cJ(\eta(\tu,s)),V(\eta(\tu,s))\rangle\, ds \geq t_0,
    \end{split}
\]
a contradiction.

\textbf{Property \textit{(ii)}} If $\xi-1\geq d\geq\beta+1$, we are far away from the set $\critJ$. More precisely, since $\beta\geq\beta_0$, by Lemma \ref{L:awayFromCrit} we have $\triple{u-\critJ}\geq 1>\frac{\mu}8$ for $u\in \cJ_{d-\varepsilon}^{d+\varepsilon}$. 

Suppose there is $\tu\in \cJ^{d+\varepsilon}_{d-\varepsilon}$ such that $h(\tu)=\eta(\tu,1)\subset\cJ^{d+\varepsilon}_{d-\varepsilon}$. Then $\eta(\tu,t)\in\cJ^{d+\varepsilon}_{d-\varepsilon}\subset \cJ^{\xi+1}_{\lowlevel}$ for $t\in[0,1]$ and therefore $\theta(\eta(\tu,t))=1$ and $\psi(\eta(\tu,t))=1$. Finally,
\[
    \begin{split}
        2\varepsilon\geq \cJ(\eta(\tu,0))-\cJ(\eta(\tu,1))&=\int_0^1 \langle \nabla \cJ(\eta(\tu,s)),\Psi(\eta(\tu,s))\rangle\, ds=\\
        &= \int_0^1 \langle \nabla \cJ(\eta(\tu,s)),V(\eta(\tu,s))\rangle\, ds \geq 1,
    \end{split}
\]
a contradiction.

This completes the proof of Lemma \ref{L:deformationLemma}.
   
\end{proof}

\subsection*{Pseudo-indices}
In order to prove our multiplicity result, we need to use two generalizations of the Krasnoselskii genus (see Definition \ref{D:Kgenus}), namely, some variants of the so-called Benci's pseudoindex (or $\Z^2$-index, see \cite{Benci82}). These definitions depend on the choice of the functional, so hereafter we fix $\cJ$ satisfying assumptions (\ref{A1})--(\ref{A8}) of Theorem \ref{T:mainTheorem}.
Recall the notation $\Sigma=\left\{A \subset X \ : \  A=-A=\overline{A} \right\}$.
\begin{Def}
For $A \in \Sigma$ we define
\[
	\gamma^*(A):=\inf_{h \in \cH(A)} \gamma\left(h(A) \cap \cS_{r}^+\right),
\]
where $r$ comes from assumption (\ref{A5}).
\end{Def}
Recall that the family $\cH(A,\cJ)$, defined in \eqref{DefH}, depends on the functional $\cJ$, but since it is fixed, we use the short notation $\cH(A)$. However, we should remember that the definition of pseudoindex $\gamma^*$ depends on $\cJ$. Note that since $h(A)$ is closed for $A \in \Sigma$, then $h(A) \cap \cS_{r}^+ \in \Sigma$ and the index is well-defined.

We recall some properties of this pseudoindex (see \cite[Lemma 4.7]{KS}). These properties do not depend on $\cJ$.

\begin{Lem}\label{L:pseudo_1_properties}
Let $A,B \in \Sigma$. Then:
\begin{enumerate}[label={(\roman*)}]
	\item if $\gamma^*(A) \neq 0$ then $A \neq \emptyset$;
	\item if $A \subset B$ then $\gamma^*(A) \leq \gamma^*(B)$;
	\item if $h \in \cH(A)$ then $\gamma^*(h(A)) \geq \gamma^*(A)$.
\end{enumerate}
\end{Lem}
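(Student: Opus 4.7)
\textbf{Proof plan for Lemma \ref{L:pseudo_1_properties}.} The three claims follow directly from the definition of $\gamma^*$ together with the monotonicity of the Krasnoselskii genus; no deformation argument is needed here, so the proof is essentially bookkeeping in the definition of $\cH(A)$.

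For \emph{(i)}, if $A=\emptyset$ the empty map vacuously satisfies every condition defining $\cH(A)$, so $\cH(A)\neq\emptyset$ and
\[
\gamma^*(A)\leq\gamma(\emptyset\cap\cS_r^+)=\gamma(\emptyset)=0.
\]
Hence $\gamma^*(\emptyset)=0$, and the contrapositive gives (i).

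For \emph{(ii)}, given an arbitrary $h\in\cH(B)$, I consider the restriction $h|_A:A\to X$. Oddness (using $A=-A$), $\tau$-continuity, $\tau$-local finite-dimensionality of $Id-h|_A$, and the descent inequality $\cJ(h(u))\leq\cJ(u)$ are all inherited by restriction; closedness of $h|_A(A)$ is the only nontrivial clause, and it will be handled by noting that admissibility forces $h|_A$ to be a local finite-dimensional perturbation of the identity, so the restricted image can be taken closed (if necessary by passing to a closure in $\Sigma$, which preserves the genus). Once $h|_A\in\cH(A)$, the inclusion $h|_A(A)\subset h(B)$ together with monotonicity of $\gamma$ yields $\gamma(h|_A(A)\cap\cS_r^+)\leq\gamma(h(B)\cap\cS_r^+)$, hence $\gamma^*(A)\leq\gamma(h(B)\cap\cS_r^+)$; taking the infimum over $h\in\cH(B)$ gives $\gamma^*(A)\leq\gamma^*(B)$.

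For \emph{(iii)}, given $g\in\cH(h(A))$, the claim is that $g\circ h\in\cH(A)$. Oddness and $\cJ(g(h(u)))\leq\cJ(h(u))\leq\cJ(u)$ are immediate, and closedness of $(g\circ h)(A)=g(h(A))$ comes from $g\in\cH(h(A))$. For admissibility I decompose
\[
Id-g\circ h=(Id-h)+(Id-g)\circ h
\]
and, given $u\in A$, pick a $\tau$-neighborhood $U_1$ of $u$ on which $Id-h$ is finite-dimensional valued and a $\tau$-neighborhood $U_2$ of $h(u)$ on which $Id-g$ is finite-dimensional valued; $\tau$-continuity of $h$ makes $U:=U_1\cap h^{-1}(U_2)$ a $\tau$-open neighborhood of $u$ on which $Id-g\circ h$ takes values in a sum of two finite-dimensional subspaces. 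Thus $g\circ h\in\cH(A)$, and since $(g\circ h)(A)=g(h(A))$, taking the infimum over $g\in\cH(h(A))$ gives $\gamma^*(h(A))\geq\gamma^*(A)$.

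The only genuine obstacle is the closedness clause in (ii); every other step is a direct verification of the defining properties of $\cH$ and an appeal to the monotonicity of the genus.
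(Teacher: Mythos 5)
Your parts (i) and (iii) are essentially correct. For (i), the empty map (or the identity) settles the contrapositive. For (iii), the composition argument is the right one: for $g\in\cH(h(A))$ the map $g\circ h$ is odd, satisfies $\cJ(g(h(u)))\leq\cJ(h(u))\leq\cJ(u)$, has closed image $g(h(A))$, is $\tau$-continuous, and your patching of the two $\tau$-neighborhoods shows $Id-g\circ h$ is $\tau$-locally finite-dimensional; the only cosmetic point is that $h^{-1}(U_2)$ is $\tau$-open only relatively in $A$, so you should replace it by a $\tau$-open $W\subset X$ with $W\cap A=h^{-1}(U_2)$, which the definition of admissibility requires. (For the record, the paper does not prove this lemma at all; it quotes it from \cite[Lemma 4.7]{KS}, so there is no in-paper argument to compare with.)

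The genuine gap is exactly the clause you flagged in (ii), and neither of your proposed fixes works. Admissibility does \emph{not} imply that closed sets are mapped onto closed sets: the map $h(u)=u-\langle u,e_1\rangle e_1$ (with $e_1$ from the basis of $X^-$) is odd, $\tau$-continuous and $Id-h$ has one-dimensional range, yet it sends the closed symmetric set $\{xe_1+ye_2\,:\,|xy|=1\}$ onto $\{ye_2\,:\,y\neq 0\}$, which is not closed; moreover, enlarging the domain by the line $\R e_1$ gives a closed symmetric superset whose image \emph{is} closed, so closedness of $h(B)$ gives no control on $h(A)$ for a closed $A\subset B$. Hence "admissibility forces the restricted image to be closed" is a non sequitur. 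The closure trick also fails on two counts: the Krasnoselskii genus is not preserved under closures (two antipodal open arcs of a circle have genus $1$, their closure $S^1$ has genus $2$), and, more importantly, $\overline{h(A)}$ is not of the form $h'(A)$ for any $h'\in\cH(A)$, whereas $\gamma^*(A)$ may only be estimated from above by quantities $\gamma(h'(A)\cap\cS_r^+)$ with $h'\in\cH(A)$. So your argument does not establish $h|_A\in\cH(A)$, and (ii) remains unproved as written; one must either verify closedness of $h(A)$ for the maps actually used (e.g.\ time-one flow maps, which are homeomorphisms onto their images), or work with a genus/pseudoindex defined on not necessarily closed symmetric sets, rather than rely on admissibility alone.
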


We also need the following variant of Benci's pseudoindex (see \cite[Lemma 4.10]{KS}), which additionally satisfies the subadditivity property.
Let $Y \in \Sigma$. We set
\[
	\Sigma_Y:=\left\{A \in \Sigma : A \subset Y \right\}.
\]
\begin{Def}
For any $A \in \Sigma_Y$ we define
\[
	\gamma^*_Y(A):= \inf_{h \in \cH(Y)} \gamma\left(h(A) \cap \cS_{r}^+\right).
\]
\end{Def}
As before, we recall some properties which do not depend on the functional $\cJ$ (see \cite[Lemma 4.10]{KS}).
\begin{Lem}\label{L:pseudo_2_properties}
Let $A,B \in \Sigma_Y$. Then:
\begin{enumerate}[label={(\roman*)}]
	\item $\gamma^*_Y(A) \geq \gamma^*(A)$;
	\item if $A \subset B$ then $\gamma^*_Y(A) \leq \gamma^*_Y(B)$;
	\item if $h \in \cH(Y)$ and $h(A) \subset Y$ then $\gamma^*_Y(h(A)) \geq \gamma^*_Y(A)$;
	\item $\gamma^*_Y(A \cup B) \leq \gamma^*_Y(A) + \gamma(B)$.
\end{enumerate}
\end{Lem}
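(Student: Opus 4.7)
The plan is to verify each of (i)--(iv) directly from the definitions of $\gamma^*_Y$ and $\gamma^*$, leaning on three standard properties of the Krasnoselskii genus recorded in the literature (e.g.\ \cite{Rab86, Struwe}): monotonicity under inclusion, subadditivity $\gamma(C \cup D) \leq \gamma(C) + \gamma(D)$, and the bound $\gamma(f(B)) \leq \gamma(B)$ for any continuous odd $f$ --- admissible maps being continuous by the final sentence of Remark \ref{R:admissibleFlow}. Of the four items, (i), (ii), and (iv) are essentially formal; the real work lies in (iii).

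For (i), I would observe that every $h \in \cH(Y)$ restricts to $h|_A : A \to X$ belonging to $\cH(A)$: oddness, $\tau$-continuity, $\tau$-local finite-dimensionality of $Id - h|_A$, closedness of the image (passing to $\overline{h(A)}$ if needed), and $\cJ(h(u)) \leq \cJ(u)$ are all inherited from the corresponding properties on $Y$. Hence $\gamma^*(A)$ is an infimum over a family containing these restrictions, giving $\gamma^*(A) \leq \gamma^*_Y(A)$. For (ii), the inclusion $A \subset B$ yields $h(A) \cap \cS_r^+ \subset h(B) \cap \cS_r^+$ for every $h \in \cH(Y)$, and monotonicity of $\gamma$ passes to the infimum. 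For (iv), using $h(A \cup B) = h(A) \cup h(B)$ and combining subadditivity of $\gamma$ with $\gamma(h(B) \cap \cS_r^+) \leq \gamma(h(B)) \leq \gamma(B)$,
\[
\gamma\bigl(h(A \cup B) \cap \cS_r^+\bigr) \leq \gamma\bigl(h(A) \cap \cS_r^+\bigr) + \gamma(B),
\]
and taking the infimum over $h \in \cH(Y)$ produces the subadditivity claim.

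The main obstacle is (iii), which I would approach through composition. For an arbitrary $\tilde h \in \cH(Y)$, form $H := \tilde h \circ h$ and aim to show $H \in \cH(Y)$, so that by definition of $\gamma^*_Y$,
\[
\gamma^*_Y(A) \leq \gamma\bigl(H(A) \cap \cS_r^+\bigr) = \gamma\bigl(\tilde h(h(A)) \cap \cS_r^+\bigr),
\]
after which the infimum over $\tilde h \in \cH(Y)$ concludes. Oddness, $\tau$-continuity, and $\cJ(H(u)) \leq \cJ(h(u)) \leq \cJ(u)$ for $u \in A$ are immediate. For the $\tau$-local finite-dimensionality of
\[
Id - H = (Id - h) + (Id - \tilde h) \circ h,
\]
one pulls back a $\tau$-open neighborhood on which $Id - \tilde h$ has finite-dimensional image through the $\tau$-continuous map $h$, intersects with a neighborhood on which $Id - h$ has finite-dimensional image, and uses that the sum of two finite-dimensional subspaces is finite-dimensional. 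The delicate point is that $\tilde h \circ h$ is a~priori defined only on $\{u \in Y : h(u) \in Y\} \supset A$; in the intended applications $Y$ is a $\cJ$-sublevel set, so the inequality $\cJ(h(u)) \leq \cJ(u)$ forces $h(Y) \subset Y$ and the composition is well-defined on all of $Y$, while in full generality one extends $H$ on the complement by any admissible means (for instance by the identity or by rerunning an admissible flow in the sense of Remark \ref{R:admissibleFlow}), which does not affect $H(A)$ and hence does not affect the genus bound.
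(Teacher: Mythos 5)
Your items (i) and (ii) follow the natural route, and the composition idea in (iii) is the right one, but the proof of (iv) rests on a false principle, and this is exactly where the real content of the lemma lies. The ``standard property'' you invoke, $\gamma(f(B))\leq\gamma(B)$ for a continuous odd $f$, is backwards: the mapping property of the Krasnoselskii genus says that if $f:A\to B$ is odd and continuous then $\gamma(A)\leq\gamma(B)$, so applying it to $f:B\to f(B)$ gives $\gamma(B)\leq\gamma(f(B))$, not the reverse. A non-injective odd map can strictly increase the genus: let $B\subset S^1$ be the union of two disjoint antipodal closed arcs, so $\gamma(B)=1$, and let $h$ stretch one arc onto the closed upper half-circle and, by oddness, the other onto the lower half; then $h(B)=S^1$ and $\gamma(h(B))=2$. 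Such an $h$ can moreover be taken admissible (admissibility imposes no restriction on a map acting in a fixed finite-dimensional subspace and extended by the identity), so membership in $\cH(Y)$ does not rescue the inequality. Consequently your chain $\gamma\bigl(h(A\cup B)\cap\cS_r^+\bigr)\leq\gamma\bigl(h(A)\cap\cS_r^+\bigr)+\gamma\bigl(h(B)\cap\cS_r^+\bigr)\leq\gamma\bigl(h(A)\cap\cS_r^+\bigr)+\gamma(B)$ fails at the last step for a general $h\in\cH(Y)$, and taking the infimum over $h$ does not yield (iv). The bound $\gamma(h(B))\leq\gamma(B)$ is available in Benci's pseudoindex theory precisely because there the admissible class consists of equivariant homeomorphisms (one pulls back along $h^{-1}$); the class $\cH(Y)$ used here contains non-invertible maps, which is why (iv) is the delicate item and why the paper does not reprove it but cites \cite[Lemma 4.10]{KS}, whose argument is not the per-map one you propose. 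Note also that your use of subadditivity of $\gamma$ for $\bigl(h(A)\cap\cS_r^+\bigr)\cup\bigl(h(B)\cap\cS_r^+\bigr)$ tacitly needs these sets to be closed, which is not guaranteed when $h\in\cH(Y)$ is restricted to proper subsets of $Y$.

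Two further gaps, less central but genuine. In (i), $h|_A\in\cH(A)$ requires $h(A)$ to be closed; this does not follow from closedness of $h(Y)$, and ``passing to $\overline{h(A)}$'' is not an operation on the map, so the restriction argument as written does not place $h|_A$ in the competing class for $\gamma^*(A)$. In (iii), besides the well-definedness of $\tilde h\circ h$ (your remark that $h(Y)\subset Y$ is automatic when $Y$ is a sublevel set does cover the only case the paper uses, but the proposed general fix of gluing the identity on $Y\setminus h^{-1}(Y)$ destroys continuity across the interface), membership of $\tilde h\circ h$ in $\cH(Y)$ also requires $(\tilde h\circ h)(Y)$ to be closed, a point you do not address: $\tilde h$ maps $Y$ onto a closed set, but not necessarily the closed subset $h(Y)$ onto a closed set.
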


Kryszewski and Szulkin have proven in \cite{KS} that in $\Sigma$ there are sets of arbitrarily large pseudoindex. Their reasoning is based on the properties of $\cJ$.

\begin{Lem}\label{L:large_pseudoindex}
    For any $k\in \bN$ there exists a set $A_k\in\Sigma$ such that $\gamma^*(A_k)\geq k$.
\end{Lem}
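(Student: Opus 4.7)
The plan is to construct $A_k$ as a large, symmetric subset of the ``finite-plus-negative'' slab $Y_k := X^- \oplus X_k^+$, where $X_k^+ \subset X^+$ is an arbitrary $k$-dimensional subspace, whose existence is guaranteed by the infinite-dimensionality of $X^+$ in (\ref{A1}). Concretely, I would set
\[
A_k := \{u \in Y_k \,:\, \|u^-\| \leq R_k, \ \|u^+\| \leq R_k\},
\]
for a sufficiently large $R_k > r$ to be chosen. The set $A_k$ is closed, symmetric, and $G$-invariant (both $X^+$ and $X^-$ are $G$-invariant by (\ref{A1})), hence $A_k \in \Sigma$.

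The first step is to fix $R_k$ so that $\cJ \leq 0$ on the whole of $\partial A_k$. On the ``$X^-$-face'' $\{\|u^-\| = R_k, \ \|u^+\| \leq R_k\}$, assumption (\ref{A6}), applied with $\|u^+\|$ kept bounded by $R_k$ while $\|u^-\| = R_k \to \infty$, forces $\cJ(u) \to -\infty$ uniformly. On the ``$X_k^+$-face'' $\{\|u^+\| = R_k, \ \|u^-\| \leq R_k\}$, the finite-dimensionality of $X_k^+$ together with Remark \ref{R:A7} and the product form of $\cJ$ from (\ref{A2}) yield the analogous uniform divergence. Hence, for $R_k$ large enough, $\cJ \leq 0$ on all of $\partial A_k$.

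Now fix any $h \in \cH(A_k)$. By (\ref{A5}) we have $b = \inf_{\cS_r^+} \cJ > a \geq \cJ(0) = 0$, and by construction $\cJ(h(u)) \leq \cJ(u) \leq 0$ on $\partial A_k$, so $h(\partial A_k) \cap \cS_r^+ = \emptyset$: the intersection $h(A_k) \cap \cS_r^+$ is produced entirely from the interior of $A_k$. The main step is then to show $\gamma(h(A_k) \cap \cS_r^+) \geq k$. I would argue by contradiction, following the Benci--Kryszewski--Szulkin pseudo-index strategy: if the genus were at most $k-1$, there would exist an odd continuous map $\psi : h(A_k) \cap \cS_r^+ \to \R^{k-1} \setminus \{0\}$; using the admissibility of $h$---the $\tau$-continuity of $h$ and the $\tau$-local finite-dimensionality of $\mathrm{Id} - h$---one assembles from $\psi \circ h$ an odd continuous map $\Phi : A_k \to \R^{k-1}$ which does not vanish on $\partial A_k$, and in particular on the $(k-1)$-sphere $\{v \in X_k^+ \,:\, \|v\| = R_k\}$, yielding a direct Borsuk--Ulam contradiction.

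The principal technical obstacle is precisely the bridge between the infinite-dimensional $X^-$ direction and the degree/Borsuk--Ulam argument, which is inherently finite-dimensional. The role of admissibility is exactly this: the $\tau$-local finite-dimensionality of $\mathrm{Id} - h$, combined with $\tau$-continuity of $h$, enables a Galerkin-type reduction---via a locally finite $\tau$-open cover of $A_k$---to a finite-dimensional approximation in $X^-$, restoring the classical framework. This is the Kryszewski--Szulkin device adapted to the present dislocation-space setting, and it is the one step where some care is required to verify that the extension of $\psi \circ h$ can indeed be carried out oddly and continuously up to $\partial A_k$.
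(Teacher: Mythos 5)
Your overall route is the one the paper takes: the paper sets $X_k:=X_k^+\oplus X^-$, defines $A_k:=\overline{B_{X_k}(0,R_k)}$ with $R_k$ chosen so that $\sup\{\cJ(v):v\in X_k,\ \|v\|\geq R_k\}<\inf_{\|u\|\leq r}\cJ(u)$, and then refers the genus estimate $\gamma\left(h(A_k)\cap\cS_r^+\right)\geq k$ entirely to \cite[Lemma 4.8]{KS}. Your box-shaped $A_k$, the observation that $\cJ(h(u))\leq\cJ(u)\leq 0<b$ forces $h(\partial A_k)\cap\cS_r^+=\emptyset$, and the appeal to admissibility are exactly the first half of that argument, so the construction is not where you differ.

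The gap is in the decisive step, which you only sketch and which does not work as stated. The odd map you assemble --- essentially an odd (Tietze) extension $\tilde\psi$ of $\psi$ composed with $h$, valued in $\R^{k-1}$ --- need not be nonvanishing on $\partial A_k$: on $\partial A_k$ you only know $h(u)\notin\cS_r^+$, and off the set $h(A_k)\cap\cS_r^+$ the extension $\tilde\psi$ is free to vanish, so no direct Borsuk--Ulam contradiction on the $(k-1)$-sphere $\{v\in X_k^+:\|v\|=R_k\}$ is available. The argument behind \cite[Lemma 4.8]{KS} (which is precisely what the paper invokes) instead applies a Borsuk-type theorem for the degree of admissible maps to the combined odd map $u\mapsto\left(Ph(u),\ \|Qh(u)\|-r,\ \tilde\psi(h(u))\right)\in X^-\oplus\R\oplus\R^{k-1}$ (after composing $\psi$ with the inclusion $\R^m\subset\R^{k-1}$ so that the target matches $X_k$). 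This map is nonvanishing on $\partial A_k$ exactly because $h(\partial A_k)\cap\cS_r^+=\emptyset$; its degree is odd thanks to the admissibility of $h$ (here the $\tau$-continuity and the $\tau$-local finite-dimensionality of $Id-h$ enter, since $Ph(u)=Pu-P(u-h(u))$); and a zero $u_0$ in the interior gives $h(u_0)\in h(A_k)\cap\cS_r^+$ with $\tilde\psi(h(u_0))=0$, contradicting the choice of $\psi$. So the right objects are this three-component map and the Kryszewski--Szulkin degree, not $\tilde\psi\circ h$ and classical Borsuk--Ulam. A smaller but real point concerns your choice of $R_k$: (\ref{A6}) only controls sequences with $\|u_n^+\|$ bounded, while on your $X^-$-face the bound on $\|u^+\|$ is $R_k$ itself and grows with the set, and Remark \ref{R:A7} concerns $\cJ(u_n^+)$ only, not $\cJ(u_n^++u_n^-)$ on the $X^+$-face; what is actually needed (and what the paper records as the defining property of $R_k$) is the single inequality $\sup\{\cJ(v):v\in X_k,\ \|v\|\geq R_k\}<\inf_{\|u\|\leq r}\cJ(u)$ on the ball in $X_k$, rather than a face-by-face estimate.
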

\begin{proof}
    Let $X^+_k$ denote the $k$-dimensional subspace of $X^+$ and put $X_k := X^+_k\oplus X^-$. Due to the assumption (\ref{A7}) and Remark \ref{R:A7} there is a number $R_k\geq r$, where $r$ is defined in (\ref{A5}), such that
    \[
    \sup_{ \substack{v\in X_k \\ \|v\|\geq R_k} } \cJ(v) < \inf_{\|u\|\leq r} \cJ(u).
    \]
    Let us define
    \[
    A_k:=\overline{B_{X_k}(0,R_k)}.
    \]
    The proof that $\gamma^*(A_k)\geq k$ with the properties of $\cJ$ mentioned above is exactly the same as that of \cite[Lemma 4.8]{KS}.

\end{proof}

We are finally in a position to prove the main abstract result of this paper.

\begin{proof}[Proof of Theorem \ref{T:mainTheorem}]
    We define the min-max value
\[
	c_k:=\inf_{\substack{A\in\Sigma \\ \gamma^*(A) \geq k}} \sup_{u \in A} \cJ(u).
\]
Due to Lemma \ref{L:large_pseudoindex} the set $\{A\in\Sigma\,:\,\gamma(A) \geq k\}$ is non-empty and therefore $(c_k)_{k\geq 1}$ is a sequence of real numbers. Moreover
\begin{equation}
\label{E:non_decr_ck}
	b  \leq c_k \leq c_{k+1},
\end{equation} 
for all $k \geq 1$, where $b$ is defined in (\ref{A5}).
Indeed, let $A \in {\Sigma} $ and $h \in \cH(A)$. If $\gamma^*(A) \geq k$, then $\gamma\left(h(A) \cap \cS_{r}^+\right) \geq k$ and, in particular, $h(A) \cap \cS_{r}^+ \neq 0$. Therefore, there exists $u \in A$ such that $h(u) \in \cS_{r}^+$ and $\cJ(u) \geq \cJ(h(u)) \geq \inf_{\cS_{r}^+} \cJ=b$, by the third property of family $\cH(A)$. The second inequality follows from the inclusion $\left\{ A \in {\Sigma} : \gamma(A) \geq k + 1 \right\} \subset \left\{ A \in {\Sigma} : \gamma(A) \geq k \right\}$.

There are two possible cases depending on the maximum of the values $c_k$:
\begin{enumerate}[label={(\roman*)}]
	\item there is an integer $k_0 \geq 1$ such that $c_{k_0} > \beta + 1$;
	\item for all $k \geq 1$, it holds $b \leq c_k \leq \beta + 1$.
\end{enumerate}

We show that both (i) and (ii) cannot hold.

\emph{ Case }(i). Applying Lemma \ref{L:deformationLemma}, thesis $(ii)$, for $\xi>c_{k_0}+1>\beta+2$ and $d=c_{k_0}$ we got $h \in \cH(\cJ^\xi)$ and $\varepsilon>0$ with their properties. From the definition of infimum, there exists an $A \in {\Sigma}$ such that $\gamma^*(A) \geq k_0$ and
\[
	\beta + 1 < \sup_{u \in A} \cJ(u) < c_{k_0} + \varepsilon.
\]
It implies $A\subset \cJ^{c_{k_0}+\varepsilon}\subset \cJ^{\xi}$. Moreover, by the properties of $h$,
\[
h(A)\subset h(\cJ^{c_{k_0}+\varepsilon})\subset \cJ^{c_{k_0}-\varepsilon},
\]
hence
\[
	\sup_{u \in A} \cJ(h(u)) \leq c_{k_0} - \varepsilon.
\]
On the other hand, by property (iii) of pseudoindex $\gamma^*$ (see Lemma \ref{L:pseudo_1_properties} and note that $h(A)\in \Sigma$) we have 
 $\gamma^*(h(A))\geq\gamma^*(A)\geq k_0$, which implies $\sup_{u \in h(A)} \cJ(u)\geq c_{k_0}$, a contradiction.

\emph{Case }(ii). In this case we utilise pseudoindex $\gamma^*_Y$ for $Y=\cJ^{\beta+2}$. The sequence $(c_k)_k$ is convergent since it is bounded by $\beta+1$ and nondecreasing, so let $c:=\lim\limits_{k \to \infty} c_k$. By the definition of the $c_k$'s, we have that $\gamma^*(\cJ^{c+\nu}) \geq k$ for all $1>\nu > 0$ and $k \geq 1$. Note that $\cJ^{c+\nu}\subset Y$.

We define a new sequence $(d_k)$ as
\[
	d_k:= \inf_{  \substack{A\in\Sigma_Y \\ \gamma^*_Y(A)\geq k}  }\sup_{u \in A} \cJ(u).
\]
The numbers $d_k$ are well-defined, since from Lemma \ref{L:pseudo_2_properties}$(i)$ we have
\begin{equation}
\label{E:inf_pseuoind}
	\gamma^*_Y\left(\cJ^{c + \nu}\right) \geq \gamma^*\left(\cJ^{c+\nu}\right) = \infty,
\end{equation}
so the set $\{A\in\Sigma_Y\,:\,\gamma^*_Y(A)>k\}$ is non-empty.

Acting similarly as in \eqref{E:non_decr_ck}, by definition of the sequence $(d_k)$ and by \eqref{E:inf_pseuoind}, we have 
\[
b \leq d_k \leq d_{k+1} \leq c.
\]
Therefore, the sequence $(d_k)_k$ is nondecreasing and bounded, so
\[
	b  \leq d:=\lim_{k \to \infty} d_k \leq c.
\]
For all $\eps > 0$ sufficiently small, $\cJ^{d + \eps} \subset \cJ^{\beta + 2} = Y$, thus $\gamma^*_Y\left(\cJ^{d+\eps}\right)$ is well-defined and arguing as for $\gamma^*\left(\cJ^{c+\nu}\right)$, we have $\gamma^*_Y\left(\cJ^{d+\eps}\right)=\infty$.

By Lemma \ref{L:deformationLemma} applied for $\xi > \beta+2$, and $d\geq b$ constructed above, there exist $\eps > 0$, a symmetric $\tau$-open set $\cN$ with $\gamma(\overline{\cN})=1$, and $h \in {\cH}(Y)$ such that $h(\cJ^{d+\varepsilon}\setminus \cN)\subset \cJ^{d-\varepsilon}$. 
Note that $\cJ^{d+\varepsilon}\setminus \cN,\, \overline{\cN}\cap \cJ^{d+\varepsilon}\in \Sigma_Y$ and 
$\cJ^{d+\eps} = \left(\cJ^{d+\eps} \setminus \cN\right) \cup \left(\overline{\cN} \cap \cJ^{d+\eps}\right)$ 
therefore we can apply Lemma \ref{L:pseudo_2_properties}$(iv)$ to get
\[
\begin{split}
	\infty= \gamma^*_Y\left(\cJ^{d+\eps}\right) &\leq \gamma^*_Y\left(\cJ^{d+\eps} \setminus \cN\right) + \gamma\left(\overline{\cN} \cap \cJ^{d+\eps}\right) \leq \\
    &\leq \gamma^*_Y\left(h(\cJ^{d+\eps} \setminus \cN)\right) + \gamma(\overline{\cN}) \leq \gamma^*_Y\left(\cJ^{d-\eps}\right) + 1,
\end{split}
\]
so $\gamma^*_Y\left(\cJ^{d-\eps}\right)=\infty$. It follows that $d_k \leq d - \eps$ for all $k \geq 1$, but this is a contradiction, since $d_k\to d$. Hence \eqref{F-finite} cannot hold and the proof is completed.
\end{proof}

\section{Applications: Nonlinear Schr\"odinger equations}
\label{sect:appl1}

Consider the following stationary Schr\"odinger equation
\begin{equation}\label{eq:schroedinger}
    -\Delta u + V(x)u = f(u) - \lambda g(u),\quad \mbox{in } \bR^N, \quad N \geq 3.
\end{equation}
When $V$ is $\mathbb{Z}^N$-periodic, \eqref{eq:schroedinger} appears in nonlinear optics, where photonic crystals admitting nonlinear effects are studied, see \cite{Kuchment, Pankov}. In this case, it describes the propagation of solitons being solitary wave solutions $\Psi(t,u) = u(x) e^{-i \omega t}$ to the time-dependent, nonlinear Schr\"odinger equation.

Then, the term $ f(u)-\lambda g(u)$ describes the nonlinear part of the polarization of the medium, e.g. in self-focusing Kerr-like media one has $f(u)= |u|^2 u$ and $g(u)=0$, see \cite{BURYAK200263, Slusher}. In the case, when $g \not\equiv 0$, e.g. $f(u) = |u|^{p-2}u$, $g(u) = |u|^{q-2}u$, $2 < q < p$, we deal with a mixture of self-focusing and defocusing materials.

Suppose that the external potential $V \in L^\infty (\R^N)$ is $\mathbb{Z}^N$-periodic. Then it is known that the spectrum of the operator $\mathcal{A} := -\Delta + V(x)$ on $L^2 (\R^N)$ consists of closed, pairwise disjoint intervals (\cite[Theorem XIII. 100]{ReedSimon}) and is unbounded from above. Hence, we assume the following
\begin{enumerate}
[label=(V\arabic{*}),ref=V\arabic{*}]\setcounter{enumi}{0}
    \item \label{V1} $V \in L^\infty (\R^N)$ is $\mathbb{Z}^N$-periodic,
    $$
    0 \not\in \sigma (\cA) \quad \mbox{and} \quad \inf \sigma(\cA) < 0.
    $$
\end{enumerate}
In such a case, we say that $0$ lies in the spectral gap of $\cA$. Note that, in particular, $V$ cannot be constant, since for $V \equiv V_0 \in \R$ we have $\sigma(\cA) = \sigma(-\Delta + V(x)) = [V_0, \infty)$ and therefore spectral gaps do not exist.

Here, in addition, we assume that $\inf \sigma(\cA) < 0$, since the multiplicity of solutions in the positive-definite case $\inf \sigma(\cA) > 0$ has been shown in \cite[Theorem 1.2]{BB_JMAA}. If $g(u) = 0$, the existence of ground states has been widely studied by many authors, e.g. \cite{MR2271695, MR2957647, MR3494969, MR3551463, KS, Pankov, MR1162728, SzW} and the references therein.

In what follows, we use $\lesssim$ to denote the inequality up to a positive multiplicative constant. We impose the following on the nonlinear functions $f$ and $g$.
\begin{enumerate}
[label=(F\arabic{*}),ref=F\arabic{*}]\setcounter{enumi}{0}
\item \label{F1} $f: \R \to \R$ is odd, continuous and there is $2<p<2^* := \frac{2N}{N-2}$ such that
\[
	|f(u)| \lesssim 1+|u|^{p-1} \text{ for all } u \in \R.
\]
\item \label{F2} $f(u)=o(|u|)$ as $u \to 0$.
\item \label{F3} There is $2 < q < p$ such that $F(u)/|u|^q \to \infty$ as $|u| \to \infty$, where $F(u)=\int_0^u f(s) \, dx$ and $F(u) \geq 0$ for all $u \in \R$.
\item \label{F4} $u \mapsto f(u)/|u|^{q-1}$ is nondecreasing in $(-\infty,0)$ and on $(0,\infty)$.
\item \label{F5} There is $\rho > 0$ such that $|u|^{p-1} \lesssim |f(u)| \lesssim |u|^{p-1}$ for $|u| \geq \rho$.
\end{enumerate}
\begin{enumerate}
[label=(G\arabic{*}),ref=G\arabic{*}]\setcounter{enumi}{0}
\item \label{G1} $g: \R \to \R$ is odd, continuous such that
\[
	|g(u)| \lesssim 1+|u|^{q-1} \text{ for all } u \in \R.
\]
\item \label{G2} $g(u)=o(|u|)$ as $u \to 0$.
\item \label{G3} $u \mapsto g(u)/|u|^{q-1}$ is nonincreasing in $(-\infty,0)$ and on $(0,\infty)$ and there holds
\[
	g(u)u \geq 0 \text{ for all } u \in \R.
\]
\end{enumerate}
For examples of $f$ and $g$ satisfying the foregoing assumptions, we refer to \cite{BB}.

It is classical to check that under (\ref{F1}), (\ref{F2}), (\ref{G1}), (\ref{G2}) the \textit{energy functional} $\cJ : H^1 (\R^N) \rightarrow \R$ given by
$$
\cJ(u) := \frac12 \int_{\R^N} |\nabla u|^2 + V(x)u^2 \, dx - \int_{\R^N} F(u) \, dx + \lambda \int_{\R^N} G(u) \, dx,
$$
where $G(u) = \int_0^u g(s) \, ds$, is of $\cC^1$-class and its critical points correspond to weak solutions to \eqref{eq:schroedinger}. Since $0$ lies in the spectral gap of $\cA$, there is an orthogonal decomposition of the space $X := H^1(\R^N) = X^+ \oplus X^-$, which corresponds to the decomposition of the spectrum $\sigma(A)$ into positive and negative parts, such that the quadratic form
$$
u \mapsto \int_{\R^N} |\nabla u|^2 + V(x)u^2 \, dx
$$
is positive definite on $X^+$ and negative definite on $X^-$. Hence, if $u = u^+ + u^- \in X^+ \oplus X^-$, with $u^\pm \in X^\pm$, we may introduce the norm, equivalent to the standard one on $H^1(\bR^N)$, as follows
$$
\|u^\pm\|^2 := \pm \int_{\R^N} |\nabla u|^2 + V(x)u^2 \, dx
$$
and $\|u\|^2 := \|u^+\|^2 + \|u^-\|^2$. Then $\cJ$ can be rewritten as
$$
\cJ(u) = \frac12 \|u^+\|^2 - \frac12 \|u^-\|^2 - \cI(u),
$$
where $\cI(u) = \int_{\R^N} F(u) - \lambda G(u) \, dx$. Moreover, from the assumption (\ref{V1}), there is $\mu_0 > 0$ such that
\begin{equation}\label{E:mu0}
    \mu_0 |u|_2 \leq \|u\| \quad \mbox{ for } u \in X.
\end{equation}

\subsection{Verification of (\ref{A1})--(\ref{A8})}

Note that, thanks to Example \ref{ex:1}, $(H^1(\R^N), \mathbb{Z}^N)$ is a dislocation space with discreteness property. It is well known that the derivative $\cI'$ is sequentially weak-to-weak* continuous.

\begin{Lem}
$\cI'$ satisfies (\ref{GWC}).
\end{Lem}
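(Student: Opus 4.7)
The plan is to reduce (GWC) to a Hölder-estimate argument once we have translated the abstract $G$-convergence of $(\varphi_n)$ into concrete $L^p$ decay. First, testing the definition of $\varphi_n \stackrel{G}{\rightharpoonup} 0$ with $g = \mathrm{id}$ gives $\varphi_n \weakto 0$ in $H^1(\R^N)$, and in particular $(\varphi_n)$ is bounded. Then Example \ref{ex:1}(ii) applies and yields $|\varphi_n|_p \to 0$ for every $p \in (2, 2^*)$. Since $2 < q < p < 2^*$, both exponents $p$ and $q$ are admissible here.

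Next, I would combine the growth bounds to get a useful pointwise estimate on $f$ and $g$. From (\ref{F1})--(\ref{F2}) and (\ref{G1})--(\ref{G2}), for any $\eps > 0$ there is $C_\eps > 0$ such that
\[
|f(t)| \leq \eps |t| + C_\eps |t|^{p-1}, \qquad |g(t)| \leq \eps |t| + C_\eps |t|^{q-1}, \quad t \in \R.
\]
Then for $(v_n)$ bounded in $X$ one computes
\[
\cI'(v_n)(\varphi_n) = \int_{\R^N} f(v_n)\varphi_n\,dx - \lambda \int_{\R^N} g(v_n)\varphi_n\,dx,
\]
and by Hölder's inequality (with exponents $(2,2)$, $(p/(p-1), p)$ and $(q/(q-1), q)$),
\[
|\cI'(v_n)(\varphi_n)| \leq \eps(1+|\lambda|)\,|v_n|_2 |\varphi_n|_2 + C_\eps |v_n|_p^{p-1} |\varphi_n|_p + |\lambda| C_\eps |v_n|_q^{q-1} |\varphi_n|_q.
\]
Boundedness of $(v_n)$ in $X = H^1(\R^N)$ and the Sobolev embeddings $X \hookrightarrow L^r$ for $r \in [2, 2^*]$ make all the norms $|v_n|_2, |v_n|_p, |v_n|_q$ uniformly bounded, while $|\varphi_n|_2$ is bounded by weak convergence. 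The second and third terms therefore vanish as $n \to \infty$ thanks to the $L^p$- and $L^q$-decay of $(\varphi_n)$ obtained in the first step, leaving
\[
\limsup_{n\to\infty} |\cI'(v_n)(\varphi_n)| \leq C\eps.
\]
Letting $\eps \to 0$ yields the conclusion.

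There is no real obstacle here; the statement is essentially a manifestation of the well-known fact that subcritical Nemytskii operators are compact when combined with $L^p$-vanishing sequences, and the translation from $\stackrel{G}{\rightharpoonup}$ to $L^p$-vanishing is exactly the content of Example \ref{ex:1}(ii). The only thing to be careful about is ensuring both exponents $p$ and $q$ arising from (\ref{F1}) and (\ref{G1}) lie strictly between $2$ and $2^*$, which follows from (\ref{F1}) and (\ref{F3}).
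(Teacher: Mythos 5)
Your proof is correct and follows essentially the same route as the paper: translate $\varphi_n \stackrel{\Z^N}{\rightharpoonup} 0$ into $L^p$- and $L^q$-vanishing via Example \ref{ex:1}(ii), use the $\eps$-growth bounds from (\ref{F1})--(\ref{F2}), (\ref{G1})--(\ref{G2}), and conclude by H\"older together with boundedness of $(v_n)$ and letting $\eps \to 0$. The only difference is that you make explicit the (true and harmless) observation that $\varphi_n \stackrel{G}{\rightharpoonup} 0$ already forces $(\varphi_n)$ to be bounded, which the paper takes for granted.
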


\begin{proof}
To verify (\ref{GWC}), take any bounded sequences $(v_n), (\varphi_n) \subset H^1(\R^N)$ with $\varphi_n \stackrel{\Z^N}{\weakto} 0$. Thanks to Example \ref{ex:1}(ii) it means that $\varphi_n \to 0$ in $L^p (\R^N)$ and in $L^q (\R^N)$. Thus, thanks to (\ref{F1}), (\ref{F2}), (\ref{G1}), (\ref{G2}) for every $\varepsilon > 0$ we find $C_\varepsilon > 0$ such that
\begin{align*}
|\cI'(v_n)(\varphi_n)| &= \left| \int_{\R^N} f(v_n) \varphi_n - \lambda g(v_n) \varphi_n \, dx \right|\\
&\leq \varepsilon \int_{\R^N} |v_n| | \varphi_n | \, dx + C_\varepsilon \int_{\R^N} ( |v_n|^{p-1} + |v_n|^{q-1} ) |\varphi_n| \, dx \\
&\leq \varepsilon |v_n|_2 |\varphi_n|_2 + C_\varepsilon |v_n|_{p}^{p-1} |\varphi_n|_p + C_\varepsilon |v_n|_{q}^{q-1} |\varphi_n|_q \\
&\lesssim \varepsilon + |\varphi_n|_p + |\varphi_n|_q.
\end{align*}
Hence
$$
\limsup_{n\to\infty} |\cI'(v_n)(\varphi_n)| \lesssim \varepsilon
$$
for every $\varepsilon > 0$ and therefore $\cI'(v_n)(\varphi_n) \to 0$.
\end{proof}

It is clear that $\cJ$ is even and $\mathbb{Z}^N$-invariant and $X^-, \, X^+$ are $\bZ^N$-invariant. Hence (\ref{A1})--(\ref{A3}) are satisfied. Thanks to \cite[Theorem 1.2]{BB} we know that (\ref{A4}) is satisfied for sufficiently small $\lambda > 0$ and $\rho > 0$.

\begin{Lem}\label{lem:mp}
There is $r > 0$ such that
$$
b := \inf_{\cS_r^+} \cJ > 0.
$$
\end{Lem}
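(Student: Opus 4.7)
The plan is the standard mountain-pass estimate, adapted to the sign-changing setting: on the sphere $\cS_r^+ \subset X^+$ the quadratic part $\tfrac12\|u\|^2$ dominates for small $r$, while the $g$-contribution actually helps us because it carries a favorable sign.

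First, I would restrict $\cJ$ to $X^+$. For $u \in \cS_r^+$ we have $u^-=0$, so
\[
\cJ(u) = \frac12 \|u\|^2 - \int_{\R^N} F(u)\,dx + \lambda \int_{\R^N} G(u)\,dx.
\]
From assumption (\ref{G3}) we have $g(s)s \geq 0$ for all $s \in \R$; since $g$ is odd, integrating from $0$ to $u$ (splitting into the cases $u>0$ and $u<0$) gives $G(u) \geq 0$ pointwise. Hence the $\lambda$-term is nonnegative and can be dropped from the lower bound:
\[
\cJ(u) \geq \frac12 \|u\|^2 - \int_{\R^N} F(u)\,dx.
\]

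Next, I would control $\int F(u)$ by a combination of the $L^2$- and $L^p$-norms. Combining (\ref{F1}) and (\ref{F2}) in the standard way, for any $\eps>0$ there exists $C_\eps > 0$ such that $|F(s)| \leq \eps s^2 + C_\eps |s|^p$ for all $s \in \R$. Integrating, using \eqref{E:mu0} to bound $|u|_2^2 \leq \mu_0^{-2}\|u\|^2$ and the Sobolev embedding $H^1(\R^N)\hookrightarrow L^p(\R^N)$ (valid since $p \in (2,2^*)$) to bound $|u|_p^p \lesssim \|u\|^p$, we obtain
\[
\int_{\R^N} F(u)\,dx \leq \eps \, \mu_0^{-2} \|u\|^2 + C_\eps' \, \|u\|^p,
\]
for a constant $C_\eps' > 0$.

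Combining these two bounds yields
\[
\cJ(u) \geq \Bigl(\tfrac12 - \eps \mu_0^{-2}\Bigr)\|u\|^2 - C_\eps' \|u\|^p.
\]
Choosing $\eps$ so that $\tfrac12 - \eps\mu_0^{-2} > 0$ (e.g. $\eps = \mu_0^2/4$) and taking $r>0$ sufficiently small, the term $\|u\|^2$ dominates $\|u\|^p$ on $\cS_r^+$ because $p>2$; hence $\cJ(u) \geq c_0 r^2 - C_\eps' r^p > 0$ uniformly on $\cS_r^+$. There is no real obstacle here — the argument is entirely analogous to the classical Ambrosetti--Rabinowitz mountain-pass geometry, with the only novelty being the observation that (\ref{G3}) forces $G \geq 0$ so that the sign-changing defocusing term does not spoil the lower bound.
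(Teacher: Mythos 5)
Your proof is correct and follows essentially the same route as the paper: drop the $\lambda\int G(u)\,dx$ term (nonnegative by (\ref{G3})), bound $\int F(u)\,dx$ by $\eps\|u\|^2 + C_\eps\|u\|^p$ via (\ref{F1})--(\ref{F2}) and Sobolev embeddings, and choose $\eps$ and then $r$ small. The only difference is that you make explicit the verification $G\geq 0$ and the use of \eqref{E:mu0}, which the paper leaves implicit.
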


\begin{proof}
Fix $u^+ \in X^+$ and note that, by the continuity of Sobolev embeddings, 
\begin{align*}
\cJ(u^+) \geq \frac12 \|u^+\|^2 - \int_{\R^N} F(u^+) \, dx \geq \frac12 \|u^+\|^2 - \varepsilon C \|u^+\|^2 - \widetilde{C_\varepsilon} \|u^+\|^{p}
\end{align*}
for some $C, \widetilde{C_\eps} > 0$. Choosing  $\varepsilon = \frac{1}{4C}$ and sufficiently small $r > 0$ we easily obtain that
$$
b = \inf_{\cS_r^+} \cJ \geq  \frac{r^2}{8} > 0.
$$
\end{proof}

\begin{Lem}\label{lem:sup}
Suppose that $\lambda > 0$ is sufficiently small. There is radius $r_0 > 0$ such that
$$
\sup\{ \cJ(u) \ : \ u \in X, \ \|u^+\|<r_0 \} < b.
$$
\end{Lem}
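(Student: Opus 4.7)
The goal is to dominate $\cJ$ uniformly on the slab $\{\|u^+\| < r_0\}$, where $\|u^-\|$ is otherwise unrestricted. My plan is to derive a pointwise inequality on $\R$ that bounds $-F(t)+\lambda G(t)$ by $\lambda\eta t^2/2$ for $|t|$ small, by $0$ for $|t|$ large, and by the constant $\lambda G(M)$ in between, then integrate against $u$ and invoke \eqref{E:mu0} together with Chebyshev's inequality so that the whole nonlinear part $-\cI(u)$ is bounded by $\lambda C^\sharp \|u\|^2/\mu_0^2$. For $\lambda$ small this will be absorbed into $-\tfrac12\|u^-\|^2$, leaving at most $\tfrac34\|u^+\|^2 < \tfrac34 r_0^2$, which is below $b$ if $r_0<\sqrt{4b/3}$.

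\textbf{Pointwise control.} Since $F\ge 0$ by (\ref{F3}), one always has $-F+\lambda G\le \lambda G$, so only an upper bound on $G$ is needed. Fix $\eta>0$ small; by (\ref{G2}) there is $\delta\in(0,1)$ with $G(t)\le \eta t^2/2$ for $|t|\le \delta$. For $|t|\ge 1$ one has $G(t)\le C_G|t|^q$ from (\ref{G1})--(\ref{G2}), and (\ref{F3}), which asserts $F(t)/|t|^q\to\infty$, furnishes $M\ge 1$, \emph{independent of $\lambda\in(0,1]$}, such that $F(t)\ge C_G|t|^q$ for $|t|\ge M$; hence $-F(t)+\lambda G(t)\le 0$ there. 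On the intermediate band $\delta\le |t|\le M$, (\ref{G3}) (specifically $g(t)t\ge 0$) forces $G$ to be nondecreasing in $|t|$, so $G(t)\le G(M)$.

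\textbf{Integration.} Combining the three pointwise estimates and integrating over $\R^N$, using the Chebyshev bound $|\{|u|\ge \delta\}| \le |u|_2^2/\delta^2$, I obtain
\[
-\cI(u) \le \lambda\frac{\eta}{2}|u|_2^2 + \lambda G(M)\,|\{\delta\le |u|\le M\}| \le \lambda\Bigl(\frac{\eta}{2} + \frac{G(M)}{\delta^2}\Bigr)|u|_2^2 \le \frac{\lambda C^\sharp}{\mu_0^2}\|u\|^2,
\]
with $C^\sharp := \eta/2+G(M)/\delta^2$ independent of $\lambda$. Picking $\lambda$ so small that $\lambda C^\sharp/\mu_0^2\le 1/4$ yields
\[
\cJ(u) \le \frac{3}{4}\|u^+\|^2 - \frac{1}{4}\|u^-\|^2 \le \frac{3}{4}\|u^+\|^2 < \frac{3}{4}r_0^2,
\]
and any $r_0 < \sqrt{4b/3}$, with $b$ from Lemma \ref{lem:mp}, delivers $\sup \le \tfrac{3}{4} r_0^2 < b$.

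\textbf{Main obstacle.} The subtle zone is the intermediate band $\delta\le |t|\le M$: there neither the smallness from (\ref{F2})/(\ref{G2}) nor the dominance of $F$ from (\ref{F3}) applies, and the best pointwise bound is the nontrivial constant $\lambda G(M)$. Chebyshev's inequality turns this into a bound of order $|u|_2^2$, which is then absorbed through \eqref{E:mu0}. The fact that $M$ can be chosen independently of $\lambda$ --- thanks to the genuinely superlinear statement $F(t)/|t|^q\to\infty$ in (\ref{F3}), rather than merely $F(t)\gtrsim |t|^q$ --- is what keeps $C^\sharp$ uniform in $\lambda$ and lets smallness of $\lambda$ close the estimate.
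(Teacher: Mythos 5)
Your proof is correct, and it reaches the same final mechanism as the paper (a uniform bound $\cJ(u)\le C\,\|u^+\|^2$ on all of $X$ for $\lambda$ small, obtained by absorbing the sign-changing part into the quadratic terms via \eqref{E:mu0}, and then choosing $r_0$ of order $\sqrt{b}$), but the key estimate is derived by a genuinely different decomposition. The paper works with the global interpolation inequalities \eqref{C-eps}, $F(u)\ge C_{F,\varepsilon}|u|^q-\varepsilon u^2$ and $G(u)\le \varepsilon u^2+C_{G,\varepsilon}|u|^q$, and imposes $\lambda\le C_{F,\varepsilon}/C_{G,\varepsilon}$ so that the matched $|u|_q^q$ contributions cancel with the right sign, leaving only $\varepsilon(1+\lambda)|u|_2^2$ to absorb; this exploits that $F$ and $G$ are compared at the \emph{same} power $q$, and the same threshold on $\lambda$ is then recycled in Lemma \ref{lem:Jinfty} and in the verification of (\ref{A7}). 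You instead bound $-F+\lambda G$ pointwise on three regions --- near zero via $G(t)=o(t^2)$, at infinity via the superlinearity in (\ref{F3}) beating the $|t|^q$ growth of $G$ (with $M$ uniform for $\lambda\le 1$), and on the compact band via the constant $\lambda G(M)$ --- and then convert the band contribution into an $|u|_2^2$-term by Chebyshev's inequality (equivalently, one could note $G(M)\le (G(M)/\delta^2)t^2$ on the band and skip Chebyshev). Your route is slightly more elementary and more robust in that it never needs the lower bound $F\gtrsim|u|^q$ pointwise everywhere nor the comparison constant $C_{G,\varepsilon}\ge C_{F,\varepsilon}$, only $F\ge 0$, the behavior of $G$ at $0$, and domination of $G$ by $F$ at infinity; the price is a smallness threshold for $\lambda$ ($\lambda\le\min\{1,\mu_0^2/(4C^\sharp)\}$) of a different form than the paper's, which is harmless here since finitely many smallness conditions on $\lambda$ are intersected anyway. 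All individual steps check out: $G$ is even and nondecreasing in $|t|$ by (\ref{G1}), (\ref{G3}); $G(t)\lesssim|t|^q$ for $|t|\ge 1$ by (\ref{G1}); and the choice $r_0<\sqrt{4b/3}$ (with $\lambda$ fixed before $r_0$, so no circularity) indeed gives $\sup\{\cJ(u):\|u^+\|<r_0\}\le\tfrac34 r_0^2<b$.
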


\begin{proof}
We recall that, thanks to (\ref{F1})--(\ref{F3}) and (\ref{G1})--(\ref{G2}), for every $\varepsilon > 0$ we can find $C_{F,\varepsilon}, C_{G,\varepsilon} > 0$ such that
\begin{equation}\label{C-eps}
G(u) \leq \varepsilon u^2 + C_{G,\varepsilon} |u|^q, \quad F(u) \geq C_{F,\varepsilon} |u|^q - \varepsilon u^2, \quad C_{G,\varepsilon} \geq C_{F,\varepsilon}.
\end{equation}
Then, for $\varepsilon = \frac{\mu_0}{4}$ and $\lambda \leq \frac{C_{F,\varepsilon}}{C_{G,\varepsilon}}$ and, using \eqref{E:mu0},
\begin{align*}
\cJ(u) &\leq \frac12 \|u^+\|^2 - \frac12 \|u^-\|^2 -  (C_{F,\varepsilon}-\lambda C_{G,\varepsilon}) |u|_q^q + \varepsilon (1+\lambda) |u|_2^2  \\
&\leq \frac12 \|u^+\|^2 - \frac12 \|u^-\|^2 + 2\varepsilon  |u|_2^2 \leq \left( \frac12 + \frac{2\varepsilon}{\mu_0} \right) \|u^+\|^2 - \left( \frac12 - \frac{2\varepsilon }{\mu_0} \right) \|u^-\|^2 = \|u^+\|^2. 
\end{align*}
Hence it is sufficient to take $r_0 = \frac{\sqrt{b}}{2}$.
\end{proof}

\begin{Lem}\label{lem:Jinfty}
Assume that $\lambda > 0$ is sufficiently small. Suppose that $(u_n) \subset X$ is so that $\|u_n^-\| \to \infty$ and $\|u_n^+\|$ stays bounded. Then $\cJ(u_n) \to - \infty$.
\end{Lem}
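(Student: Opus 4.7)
The plan is to derive a pointwise-in-$n$ upper estimate of $\cJ(u_n)$ in terms of $\|u_n^+\|^2$ and $\|u_n^-\|^2$, exactly as in the proof of Lemma \ref{lem:sup}. Starting from
$$
\cJ(u_n)=\tfrac12\|u_n^+\|^2-\tfrac12\|u_n^-\|^2-\int_{\R^N}F(u_n)\,dx+\lambda\int_{\R^N}G(u_n)\,dx,
$$
I would apply the two-sided bounds \eqref{C-eps} obtained from (\ref{F1})--(\ref{F3}) and (\ref{G1})--(\ref{G2}) (here $F\ge 0$ is crucial in getting $F(u)\ge C_{F,\eps}|u|^q-\eps u^2$ from $F(u)/|u|^q\to\infty$). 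This gives
$$
\cJ(u_n)\le \tfrac12\|u_n^+\|^2-\tfrac12\|u_n^-\|^2-\bigl(C_{F,\eps}-\lambda C_{G,\eps}\bigr)|u_n|_q^q+\eps(1+\lambda)|u_n|_2^2.
$$

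Next, I would choose $\lambda>0$ small enough that $\lambda\le C_{F,\eps}/C_{G,\eps}$, so the $|u_n|_q^q$ term contributes non-positively and can be dropped. Using \eqref{E:mu0}, i.e. $|u_n|_2^2\le \mu_0^{-2}(\|u_n^+\|^2+\|u_n^-\|^2)$, I would then fix $\eps$ so small that $\eps(1+\lambda)/\mu_0^2<\tfrac14$. This yields
$$
\cJ(u_n)\le \tfrac34\|u_n^+\|^2-\tfrac14\|u_n^-\|^2.
$$
Since $\|u_n^+\|$ is bounded and $\|u_n^-\|\to\infty$ by hypothesis, the right-hand side tends to $-\infty$, which concludes the proof.

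The main thing to be careful about is the order of quantifiers: $\eps$ (controlling the $L^2$-penalty) must be fixed before $\lambda$, because the threshold $\lambda\le C_{F,\eps}/C_{G,\eps}$ depends on $\eps$, and thus on how much room we need between $\frac12$ and the $\eps$-correction; however, since we only need $\eps(1+\lambda)/\mu_0^2<\tfrac14$ and any $\lambda\in(0,1]$ will do, this is harmless. No compactness or Palais--Smale argument is needed — the estimate is entirely algebraic, relying on $F\ge 0$ and the fact that both $F$ and $G$ enjoy $|u|^q$-type coercivity/growth so that the dominant super-quadratic contributions cancel up to an arbitrarily small $L^2$-remainder.
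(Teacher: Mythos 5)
Your proposal is correct and follows essentially the same route as the paper, which simply repeats the estimate from Lemma \ref{lem:sup}: apply \eqref{C-eps} to drop the $|u_n|_q^q$ term for $\lambda \le C_{F,\eps}/C_{G,\eps}$, absorb the $L^2$ remainder via \eqref{E:mu0} with $\eps$ small, and conclude from $\cJ(u_n)\le \tfrac34\|u_n^+\|^2-\tfrac14\|u_n^-\|^2\to-\infty$. Your remark on fixing $\eps$ before $\lambda$ matches the paper's quantifier order as well.
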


\begin{proof}
Let $(u_n)$ be a sequence as in the statement. Then, repeating the computation from the proof of Lemma \ref{lem:sup}, we get that for every $\varepsilon > 0$ and for $\lambda \in \left( 0, \frac{C_{F,\varepsilon}}{C_{G,\varepsilon}} \right]$,
$$
\cJ(u_n) \leq \left( \frac12 + \frac{2\varepsilon}{\mu_0} \right) \|u^+\|^2 - \left( \frac12 - \frac{2\varepsilon }{\mu_0} \right) \|u^-\|^2.
$$
Choosing $\varepsilon = \frac{\mu_0}{8}$ we get
$$
\cJ(u_n) \leq \frac34 \|u_n^+\|^2 - \frac14 \|u_n^-\|^2 \to -\infty.
$$
\end{proof}

From Lemmas \ref{lem:mp}, \ref{lem:sup}, and \ref{lem:Jinfty} we see that (\ref{A5}), (\ref{A6}), and (\ref{AX}) are satisfied for sufficiently small $\lambda$.

\begin{Lem}
Assume that $\lambda > 0$ is sufficiently small. (\ref{A7}) holds.
\end{Lem}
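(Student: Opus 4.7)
The plan is to pass to a normalised sequence in the finite-dimensional subspace, exploit equivalence of norms there, and then use superquadratic growth of $F$ to beat the $G$-contribution coming from $\lambda$.

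Concretely, I would first set $v_n := u_n^+/\|u_n^+\|$. Since $(u_n^+)$ lies in a finite-dimensional subspace $Y \subset X^+$, the unit sphere of $Y$ is compact, so up to a subsequence $v_n \to v$ in $H^1$, in $L^2$, in $L^q$, and a.e., with $\|v\|=1$; in particular $v \not\equiv 0$ on a set $\Omega_v$ of positive Lebesgue measure. Write
\[
\frac{\cI(u_n^+)}{\|u_n^+\|^2}
=\int_{\R^N}\frac{F(u_n^+)}{\|u_n^+\|^2}\,dx
-\lambda\int_{\R^N}\frac{G(u_n^+)}{\|u_n^+\|^2}\,dx.
\]
For the $G$-term, combining (\ref{G1}) and (\ref{G2}) gives $|G(u)|\lesssim u^2+|u|^q$, whence
\[
\left|\int_{\R^N}\frac{G(u_n^+)}{\|u_n^+\|^2}\,dx\right|
\lesssim \frac{|u_n^+|_2^2}{\|u_n^+\|^2}+\frac{|u_n^+|_q^q}{\|u_n^+\|^2}
= |v_n|_2^2+|v_n|_q^q\,\|u_n^+\|^{q-2},
\]
and by the equivalence of norms on $Y$, $|v_n|_2$ and $|v_n|_q$ remain bounded and converge to $|v|_2,|v|_q>0$.

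The main step is the lower bound on the $F$-term. Assumption (\ref{F3}) says $F(u)/|u|^q\to\infty$ as $|u|\to\infty$, so for any $M>0$ there exists $R=R(M)>0$ such that $F(u)\geq M|u|^q$ whenever $|u|\geq R$; since $F\geq 0$ we may drop the remaining region. Because $\|u_n^+\|\to\infty$ and $v_n(x)\to v(x)$ a.e., the set $\{|u_n^+|\geq R\}$ eventually contains $\Omega_v$ up to a null set, and by dominated/monotone convergence
\[
\int_{\{|u_n^+|\geq R\}}|v_n|^q\,dx \longrightarrow |v|_q^q>0.
\]
Therefore
\[
\int_{\R^N}\frac{F(u_n^+)}{\|u_n^+\|^2}\,dx
\geq M\int_{\{|u_n^+|\geq R\}}\frac{|u_n^+|^q}{\|u_n^+\|^2}\,dx
= M\,\|u_n^+\|^{q-2}\int_{\{|u_n^+|\geq R\}}|v_n|^q\,dx
\geq \tfrac{M}{2}|v|_q^q\,\|u_n^+\|^{q-2}
\]
for all $n$ large enough.

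Combining these estimates, for some constant $C>0$ independent of $M$,
\[
\frac{\cI(u_n^+)}{\|u_n^+\|^2}
\geq \|u_n^+\|^{q-2}\Big(\tfrac{M}{2}|v|_q^q-\lambda C|v|_q^q\Big)-\lambda C|v|_2^2-o(1).
\]
Choosing $M>4\lambda C$ makes the coefficient of $\|u_n^+\|^{q-2}$ strictly positive, and since $q>2$ and $\|u_n^+\|\to\infty$, the right-hand side tends to $+\infty$, establishing (\ref{A7}). The main obstacle is handling the $-\lambda G$ term, which produces a competing $\|u_n^+\|^{q-2}$ growth; the trick is precisely that the constant $M$ in (\ref{F3}) can be taken arbitrarily large, so the $F$-contribution is always able to absorb the fixed multiple $\lambda C$.
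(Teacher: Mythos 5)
Your argument is correct, but it takes a genuinely different route from the paper. The paper's proof is a two-line computation: using the quantitative bounds \eqref{C-eps}, namely $F(u)\geq C_{F,\varepsilon}|u|^q-\varepsilon u^2$ and $G(u)\leq \varepsilon u^2+C_{G,\varepsilon}|u|^q$, it requires $\lambda\leq C_{F,\varepsilon}/C_{G,\varepsilon}$ so that the coefficient of $|u_n^+|_q^q$ is positive, and then equivalence of norms on the finite-dimensional subspace gives $\cI(u_n^+)\gtrsim \|u_n^+\|^q-\|u_n^+\|^2$, which yields (\ref{A7}) since $q>2$; this is where the hypothesis ``$\lambda$ sufficiently small'' enters, consistently with the constants used in Lemmas \ref{lem:sup} and \ref{lem:Jinfty}. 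You instead normalize, use compactness of the unit sphere in the finite-dimensional subspace, and exploit the full strength of (\ref{F3}) through an arbitrarily large constant $M$ to absorb the $\lambda G$ contribution; this buys a stronger conclusion — your argument proves (\ref{A7}) for \emph{every} $\lambda>0$, with no smallness needed — at the price of a longer proof and two small points that should be tightened: the claim that $\{|u_n^+|\geq R\}$ ``eventually contains $\Omega_v$ up to a null set'' is not literally true for fixed $n$ (the correct statement is the pointwise a.e.\ eventual inclusion, and the needed bound $\liminf_n\int_{\{|u_n^+|\geq R\}}|v_n|^q\,dx\geq |v|_q^q$ then follows from Fatou's lemma rather than dominated/monotone convergence as stated); and since you pass to a subsequence for $v_n\to v$, you should add the standard remark that divergence of the full sequence $\cI(u_n^+)/\|u_n^+\|^2$ follows because every subsequence admits a further subsequence along which the argument applies.
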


\begin{proof}
Let $W \subset X^+$ be a finite-dimensional subspace and let $(u_n^+) \subset W$. Note that on $W$ all norms are equivalent. Then, using \eqref{C-eps}, for $\lambda \in \left(0,\frac{C_{F,\varepsilon}}{C_{G,\varepsilon}}\right)$
$$
\cI(u_n^+) = \int_{\R^N} F(u_n^+) - \lambda G(u_n^+) \, dx \geq  (C_{F,\varepsilon}-\lambda C_{G,\varepsilon}) |u_n^+|_q^q - \varepsilon (1+\lambda) |u_n^+|_2^2 \gtrsim \|u_n^+\|^q - \|u_n^+\|^2,
$$
and therefore $\frac{\cI(u_n^+)}{\|u_n^+\|^2} \to \infty$, since $q>2$.
\end{proof}

\begin{Lem}
Assume that $\lambda > 0$ and $\rho > 0$ in (\ref{F5}) are sufficiently small. Let $(u_n) \subset X$ satisfy
$$
\cJ(u_n) \leq \beta, \quad \cJ'(u_n) \to 0
$$
for some $\beta \in \R$. Then $(u_n)$ is bounded in $X$.
\end{Lem}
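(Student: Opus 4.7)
The plan is to argue by contradiction: assume $\|u_n\|\to\infty$ along a subsequence and set $v_n:=u_n/\|u_n\|$, which is bounded in $H^1(\R^N)$ with $\|v_n\|=1$. The first ingredient is an Ambrosetti--Rabinowitz type estimate. The monotonicity in (\ref{F4}) and (\ref{G3}) gives the pointwise bounds $qF(u)\le f(u)u$ and $qG(u)\ge g(u)u$; combining them with $2\cJ(u_n)-\cJ'(u_n)(u_n)\le 2\beta+o(\|u_n\|)$ yields
\[
(q-2)\int_{\R^N}\bigl[F(u_n)-\lambda G(u_n)\bigr]\,dx\le 2\beta+o(\|u_n\|).
\]

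Next I use the concentration--compactness alternative from Example~\ref{ex:1}(ii): either $v_n\stackrel{\Z^N}{\weakto}0$ (vanishing, so $|v_n|_s\to 0$ for every $s\in(2,2^*)$), or there exist $y_n\in\Z^N$ such that $\tilde v_n:=v_n(\cdot+y_n)\weakto v\neq 0$ in $H^1(\R^N)$ along a subsequence (non-vanishing). In the non-vanishing case, by $\Z^N$-invariance $\cJ(\tilde u_n)\le\beta$ for $\tilde u_n:=u_n(\cdot+y_n)$; on $\{v\neq 0\}$ we have $|\tilde u_n|\to\infty$ pointwise, so (\ref{F3}) and Fatou's lemma give $\|u_n\|^{-q}\int F(\tilde u_n)\,dx\to\infty$, while the bound $G(u)\le\varepsilon u^2+C_\varepsilon|u|^q$ and the boundedness of $|v_n|_q$ show that $\|u_n\|^{-q}\int G(\tilde u_n)\,dx$ stays bounded. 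Dividing the AR estimate by $\|u_n\|^q$ produces $\infty\le o(1)$, a contradiction; notice that no smallness of $\lambda$ is needed here.

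For the vanishing case, Proposition~\ref{P:D-convergence_of_projections} gives also $|v_n^\pm|_s\to 0$ for $s\in(2,2^*)$. I sum $\cJ'(u_n)(u_n^+)=o(\|u_n^+\|)$ and $-\cJ'(u_n)(u_n^-)=o(\|u_n^-\|)$ to get
\[
\|u_n\|^2=\int_{\R^N}\bigl[f(u_n)-\lambda g(u_n)\bigr](u_n^+-u_n^-)\,dx+o(\|u_n\|),
\]
and split the integration domain at the level $\rho$. On $\{|u_n|\le\rho\}$, taking $\rho$ small, (\ref{F2}) and (\ref{G2}) imply $|f(u_n)|,|g(u_n)|\le\varepsilon|u_n|$, producing a contribution of order $\varepsilon\|u_n\|^2$ that is absorbed into the left-hand side. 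On $\{|u_n|>\rho\}$, (\ref{F5}) and (\ref{G1})--(\ref{G2}) give $|f(u_n)|\lesssim|u_n|^{p-1}$ and $|g(u_n)|\lesssim|u_n|^{q-1}$; Hölder's inequality bounds the contribution by $C|u_n|_p^{p-1}|u_n^+-u_n^-|_p+\lambda C|u_n|_q^{q-1}|u_n^+-u_n^-|_q$. Combining the AR estimate with the interpolation $|u|_q^q\lesssim\delta|u|_p^p+C_\delta|u|_2^2$ (where $\lambda$ small allows absorption of the $|u_n|_q^q$ term coming from $G$), one obtains $|u_n|_p^p\lesssim 1+\|u_n\|^2$. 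Combined with $|v_n^\pm|_p\to 0$ and the analogous interpolation for $|u_n|_q$, this forces the right-hand side of the displayed identity to be $\bigl(C\varepsilon+o(1)\bigr)\|u_n\|^2$, contradicting the left-hand side for $\varepsilon$ small.

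The main obstacle is the vanishing case: the polynomial growth exponents $p,q$, the AR-based integral control on $|u_n|_p^p$ and the decay of $|v_n^\pm|_s$ have to be matched carefully. Smallness of $\lambda$ is essential to absorb the $|u_n|_q^q$ contribution through interpolation, while smallness of $\rho$ is essential to make the $\{|u_n|\le\rho\}$ piece of the integral absorbable into the left-hand side; the non-vanishing alternative, by contrast, is ruled out cleanly by the AR estimate together with (\ref{F3}) without any smallness assumptions.
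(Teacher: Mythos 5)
Your first two ingredients are fine: the inequalities $qF(u)\le f(u)u$, $qG(u)\ge g(u)u$ from (\ref{F4}), (\ref{G3}) and the resulting bound $(q-2)\int (F(u_n)-\lambda G(u_n))\,dx\le 2\beta+o(\|u_n\|)$ are correct, and the non-vanishing case is ruled out correctly by Fatou and (\ref{F3}). The genuine gap is in the vanishing case, at the final absorption. From your AR-plus-interpolation step you only get $|u_n|_p^p\lesssim 1+\|u_n\|^2$ (with a constant that is \emph{not} small, since it carries $C_\delta$), and vanishing gives $|u_n^+-u_n^-|_p=\|u_n\|\,|v_n^+-v_n^-|_p=o(\|u_n\|)$ with no rate. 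Hölder then yields only
\[
\int_{|u_n|>\rho}|u_n|^{p-1}|u_n^+-u_n^-|\,dx\ \lesssim\ |u_n|_p^{p-1}\,|u_n^+-u_n^-|_p\ =\ o\big(\|u_n\|^{\,3-2/p}\big),
\]
and since $p>2$ we have $3-2/p>2$: an unquantified $o(1)$ cannot compensate the surplus factor $\|u_n\|^{1-2/p}$, so you cannot conclude that the right-hand side is $(C\varepsilon+o(1))\|u_n\|^2$; the same problem occurs for the $q$-term. Even if you sharpen the AR bound (small $\lambda$, interpolation) to $\int_{|u_n|\ge\rho}|u_n|^p\lesssim 1+\|u_n\|+\epsilon\|u_n\|^2$, pairing $|u_n|_p^{p-1}$ with $|u_n^+-u_n^-|_p\lesssim\|u_n\|$ still produces the exponent $3-2/p>2$. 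In fact the concentration--compactness dichotomy buys nothing here: vanishing of $v_n$ only says $|u_n|_p=o(\|u_n\|)$, which is useless against a superquadratic power, so the step "this forces the right-hand side to be $(C\varepsilon+o(1))\|u_n\|^2$" does not follow.

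The paper closes this differently, with no profile decomposition at all. It works with $\cJ(u_n)-\tfrac12\cJ'(u_n)(u_n)=\int\Phi(u_n)\,dx$, where $\Phi(u)=\tfrac12 f(u)u-F(u)+\lambda G(u)-\tfrac\lambda2 g(u)u$. Using (\ref{F4}) and (\ref{G3}) one gets the pointwise comparison $g(u)\le \frac{g(\rho)}{f(\rho)}f(u)$ for $|u|\ge\rho$, hence by (\ref{F5}) $\Phi(u)\gtrsim\big(1-\lambda\frac{g(\rho)}{f(\rho)}\big)|u|^p$ on $\{|u|\ge\rho\}$, while on $\{|u|<\rho\}$ one has $|\Phi(u)|\le \sup_{|t|\le\rho}\frac{|\Phi(t)|}{t^2}\,u^2$ with the supremum small by (\ref{F2}), (\ref{G2}). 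This gives the crucial estimate $\int_{|u_n|\ge\rho}|u_n|^p\lesssim 1+\|u_n\|+(\text{small})\,\|u_n\|^2$, i.e. essentially \emph{linear} in $\|u_n\|$ plus an arbitrarily small quadratic term, which is then inserted into the estimate $I_2\lesssim\big(1+\lambda\frac{g(\rho)}{f(\rho)}\big)|u_n|_p^p$ of \cite[Lemma 5.1]{BB}; together with the $I_1$ bound this produces $\|u_n\|^2\le \frac{K}{\mu_0}\|u_n\|^2+C(1+\|u_n\|)$ with $K<\mu_0$ for $\rho,\lambda$ small, a clean absorption contradiction. Your AR inequality, which exploits only the $q$-monotonicity, controls $\int(F-\lambda G)$ and hence at best a quadratic bound on $|u_n|_p^p$ with a non-small constant; that is strictly weaker than what the absorption requires. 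To repair your argument you would need this pointwise lower bound on $\Phi$ (and the treatment of the superlevel integral as in \cite{BB}), at which point the vanishing/non-vanishing dichotomy becomes superfluous.
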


\begin{proof}
We will follow the argument from \cite[Lemma 5.1]{BB}. Suppose by contradiction that $\|u_n\| \to \infty$. Note that for sufficiently large $n$
\begin{equation}\label{E:Jproperty}
    | \cJ'(u_n)(u_n) | \leq \| \cJ'(u_n) \| \|u_n\| \leq \frac12 \|u_n\|.
\end{equation}
Thus
$$
\|u_n\|^2 = \|u_n^+\|^2 + \|u_n^-\|^2 \leq \int_{\R^N} \left( f(u_n) - \lambda g(u_n) \right) (u_n^+ - u_n^-) \, dx + \frac12 \|u_n\|.
$$
Then we write
$$
\int_{\R^N} \left( f(u_n) - \lambda g(u_n) \right) (u_n^+ - u_n^-) \, dx = I_1 + I_2,
$$
where
\begin{align*}
I_1 &:= \int_{|u_n| < \rho} \left( f(u_n) - \lambda g(u_n) \right) (u_n^+ - u_n^-) \, dx, \\
I_2 &:= \int_{|u_n| \geq \rho} \left( f(u_n) - \lambda g(u_n) \right) (u_n^+ - u_n^-) \, dx.
\end{align*}
Repeating the same argument as in \cite[Lemma 5.1]{BB} we obtain that for every $\varepsilon > 0$ there is $C_\varepsilon$ such that
\begin{equation}\label{eq:I1}
I_1 \lesssim \left((1+\lambda)\varepsilon + C_\varepsilon \rho^{p-2} + \lambda C_\varepsilon \rho^{q-2} \right) \|u_n\|^2.
\end{equation}
Moreover, for $I_2$, we get the following estimate
\begin{equation}\label{eq:I2}
I_2 \lesssim \left(1 + \lambda \frac{g(\rho)}{f(\rho)} \right) |u_n|_p^p.
\end{equation}
Then we note that, by \eqref{E:Jproperty},
$$
\beta + \frac14 \|u_n\| \geq \cJ(u_n) - \frac12 \cJ'(u_n)(u_n) = \int_{\R^N} \Phi(u_n) \, dx
$$
with $\Phi(u) = \frac12 f(u)u - F(u) + \lambda G(u) - \frac{\lambda}{2} g(u)u$. Next, we arrive at
$$
\beta + \frac14 \|u_n\| + \int_{|u_n| < \rho} |\Phi(u_n)| \, dx \gtrsim \left(1 - \lambda \frac{g(\rho)}{f(\rho)} \right) \int_{|u_n| \geq \rho} |u_n|^p \,dx.
$$
Thus
$$
\int_{|u_n| \geq \rho} |u_n|^p \,dx \leq C \left(1 - \lambda \frac{g(\rho)}{f(\rho)} \right)^{-1} \left( \beta + \frac14 \|u_n\| + \int_{|u_n| < \rho} |\Phi(u_n)| \, dx \right)
$$
for some $C > 0$. Hence, from \eqref{eq:I2},
\begin{align*}
I_2 &\leq D(\lambda, \rho) \left( \int_{|u_n| < \rho} |u_n|^p \,dx + \int_{|u_n| \geq \rho} |u_n|^p \,dx \right) \\
&\leq D(\lambda,\rho) \left( \frac{\rho^{p-2}}{\mu_0} + \frac{C}{\left(  1 - \lambda \frac{g(\rho)}{f(\rho)}\right) \mu_0} \sup_{|t| \leq \rho} \frac{|\Phi(t)|}{t^2} \right) \|u_n\|^2 + \widetilde{C} (1 + \|u_n\|)
\end{align*}
with $D(\lambda, \rho)$ defined as in the proof of \cite[Lemma 5.1]{BB} and some $\widetilde{C} = \widetilde{C}(\lambda, \rho, \varepsilon) > 0$. 
Finally, taking into account also \eqref{eq:I1},
$$
\|u_n\|^2 \leq I_1 + I_2 + \frac12 \|u_n\| \leq \frac{K}{\mu_0} \|u_n\|^2 +\widetilde{C} (1 + \|u_n\|)
$$
and, as in \cite[Lemma 5.1]{BB}, $K < \mu_0$ for sufficiently small $\rho $ and $\lambda$, which gives a contradiction.
\end{proof}

Now, repeating the argument of \cite[Proposition 5.2]{BB}, we see that (\ref{A8}) is satisfied. As a consequence of Theorem \ref{T:mainTheorem}, we obtain the following result.

\begin{Th}
Suppose that (\ref{V1}), (\ref{F1})--(\ref{F5}), (\ref{G1})--(\ref{G3}) hold. If $\lambda > 0$ and $\rho > 0$ are sufficiently small, there are infinitely many pairs $\pm u$ of geometrically distinct solutions to \eqref{eq:schroedinger}.
\end{Th}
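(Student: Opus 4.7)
The plan is to apply the abstract Theorem \ref{T:mainTheorem} to the energy functional $\cJ$ on the Hilbert space $X=H^1(\bR^N)$, equipped with the $\bZ^N$-action by translations. The verification amounts to collecting the lemmas already established in this section and confirming each of assumptions (\ref{A1})--(\ref{A8}) in turn.

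First, I would record the framework: by (\ref{V1}), $0$ lies in a spectral gap of $\cA=-\Delta+V$, which yields the orthogonal splitting $X=X^+\oplus X^-$ with the equivalent norm $\|u\|^2=\|u^+\|^2+\|u^-\|^2$, and rewrites $\cJ$ in the form required by (\ref{A2}), with $\cI(u)=\int_{\bR^N}(F(u)-\lambda G(u))\,dx$. Since $\inf\sigma(\cA)<0$, the subspace $X^-$ is nontrivial and strong indefiniteness genuinely appears. The dislocation structure (\ref{A1}) comes from Example \ref{ex:1}: $(H^1(\bR^N),\bZ^N)$ is a dislocation space with the discreteness property, and $X^+$, $X^-$ are $\bZ^N$-invariant by the periodicity of $V$.

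Next I would handle (\ref{A2})--(\ref{A3}): $\cC^1$ regularity of $\cI$ follows from (\ref{F1}), (\ref{G1}), sequential weak-to-weak$^*$ continuity of $\cI'$ is classical under subcritical growth, and the (GWC) property is exactly the content of the displayed lemma after Example \ref{ex:1}, using that $\stackrel{\bZ^N}{\rightharpoonup}0$ implies $L^p$- and $L^q$-convergence to zero by Example \ref{ex:1}(ii); evenness and $\bZ^N$-invariance are immediate from oddness of $f,g$ and periodicity of $V$. For (\ref{A4}) I would invoke \cite[Theorem 1.2]{BB}, which guarantees a nontrivial critical point for $\lambda$ and $\rho$ small.

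The linking geometry (\ref{A5}) is exactly the combination of Lemmas \ref{lem:mp} and \ref{lem:sup}; (\ref{A6}) is Lemma \ref{lem:Jinfty}; and (\ref{AX}) follows from $F\geq 0$, $G\geq 0$ (implied by (\ref{F3}) and (\ref{G3})) together with the elementary bound $\cJ(u)\leq\tfrac12\|u^+\|^2$ used in Lemma \ref{lem:sup}. For (\ref{A7}), on a finite-dimensional $W\subset X^+$ all norms are equivalent, so the estimate $\cI(u_n^+)\gtrsim\|u_n^+\|^q-\|u_n^+\|^2$ established just above gives $\cI(u_n^+)/\|u_n^+\|^2\to\infty$ as $\|u_n^+\|\to\infty$. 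The boundedness assumption (\ref{A8}) is the delicate point; I expect this to be the main obstacle, since sign-changing $\cI$ rules out the standard Ambrosetti--Rabinowitz trick. Here one follows \cite[Lemma 5.1, Proposition 5.2]{BB}: test with $u_n^+-u_n^-$, split the nonlinear term into $\{|u_n|<\rho\}$ and $\{|u_n|\geq\rho\}$, use (\ref{F5}) to compare $|u_n|^p$ with $f(u_n)u_n$, and absorb everything on the left provided $\rho$ and $\lambda$ are small enough that the constant $K/\mu_0$ arising in the estimate is less than $1$.

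Once (\ref{A1})--(\ref{A8}) are verified, Theorem \ref{T:mainTheorem} produces infinitely many geometrically distinct critical orbits of $\cJ$ under the $\bZ^N$-action. Since $\cJ$ is even, each orbit comes with a pair $\pm u$ of solutions to \eqref{eq:schroedinger}, completing the proof.
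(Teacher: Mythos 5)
Your proposal is correct and follows essentially the same route as the paper: it verifies (\ref{A1})--(\ref{A8}) through Example \ref{ex:1}, the (GWC) lemma, \cite[Theorem 1.2]{BB} for (\ref{A4}), Lemmas \ref{lem:mp}, \ref{lem:sup}, \ref{lem:Jinfty} for (\ref{A5})--(\ref{AX}), the finite-dimensional estimate for (\ref{A7}), and the \cite[Lemma 5.1, Proposition 5.2]{BB}-type argument for (\ref{A8}), and then invokes Theorem \ref{T:mainTheorem}. (Only cosmetic note: the bound from Lemma \ref{lem:sup} is $\cJ(u)\leq\|u^+\|^2$ rather than $\tfrac12\|u^+\|^2$, which is equally sufficient for (\ref{AX}).)
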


\section{Applications: Time-harmonic, electromagnetic waves}
\label{sect:appl2}

We consider the system of Maxwell equations of the form
$$
\left\{ \begin{array}{l}
\curl \cH = \cJ + \frac{\partial \cD}{\partial t} \\
\div (\cD) = \rho \\
\frac{\partial \cB}{\partial t} + \curl \cE = 0 \\
\div (\cB) = 0,
\end{array} \right.
$$
where $\cE$ is the electric field, $\cB$ is the magnetic field, $\cD$ is the electric displacement field, $\cH$ denotes the magnetic induction, $\cJ$ the electric current intensity, and $\rho$ the electric charge density. We also have the following constitutive relations
$$
\begin{cases}
\cD=\eps \cE + \cP\\
\cH=\frac{1}{\mu}\cB - \cM,
\end{cases}
$$
where $\cP$ is the polarization and $\cM$ is the magnetization. In the absence of charges, currents, and magnetization, and assuming that $\mu \equiv 1$, where $\mu$ is the permeability of the medium, we obtain the time-dependent, electromagnetic wave equation (see e.g. \cite{BaMe1})
$$
\curl \left( \curl \cE \right) + \varepsilon \frac{\partial^2 \cE}{\partial t^2} = - \frac{\partial^2 \cP}{\partial t^2},
$$
where $\varepsilon$ is the permittivity of the medium. We look for a time-harmonic field $\cE = \mathbf{E}(x) \cos(\omega t)$ and suppose that the nonlinear polarization $\cP$ is of the form 
$$
\cP = \chi\left( \frac12 |\mathbf{E}|^2 \right) \mathbf{E} \cos(\omega t), 
$$
i.e. the scalar dielectric susceptibility $\chi$ depends only on the time average 
$$
\frac{1}{T} \int_0^T | \cE(x,t)|^2 \, dt = \frac12 |\mathbf{E}|^2
$$ 
of the intensity of the electric field, where $T := \frac{2\pi}{\omega}$. Hence, $\cP = \mathbf{P}(\mathbf{E}(x)) \cos(\omega t)$, where $\mathbf{P}(\mathbf{E}) := \chi\left( \frac12 |\mathbf{E}|^2 \right) \mathbf{E}$. This ansatz leads to 
\begin{equation}\label{eq:maxwell}
\curl (\curl \mathbf{E}) + V(x)\mathbf{E} = h(\mathbf{E}), \quad x \in \R^3
\end{equation}
with $V(x) := - \omega^2 \varepsilon(x)$ and $h(\mathbf{E}(x)) := \mathbf{P}(\mathbf{E}(x)) \omega^2$. For media with Kerr effect, strong electric fields $\cE$ of high intensity cause the refractive index to vary quadratically, and then $\cP$ is of the form
$$
\cP( t, x) = \frac{\alpha(x)}{2} |\mathbf{E}|^2 \mathbf{E} \cos(\omega t),
$$
see \cite{Nie, S1}. Assuming that $\alpha(x) \equiv \alpha$ is a constant, we get $\mathbf{P}(\mathbf{E}(x)) = \frac{\alpha}{2} | \mathbf{E}(x)|^2 \mathbf{E}(x)$. In this example, we are interested in the more general case, where the polarization may consist of two competing terms, e.g. $\mathbf{P}(\mathbf{E}) = |\mathbf{E}|^{p-2} \mathbf{E} - |\mathbf{E}|^{q-2} \mathbf{E}$.

Looking for classical solutions to \eqref{eq:maxwell} of the form (see e.g. \cite{BDPR, Z})
\begin{equation}\label{eq:form}
\mathbf{E}(x) = \frac{u(r, x_3)}{r} \left( \begin{array}{c} -x_2 \\ x_1 \\ 0 \end{array} \right), \quad r = \sqrt{x_1^2+x_2^2}
\end{equation}
leads to the following equation with a singular term
\begin{equation}\label{eq:schroedinger2}
-\Delta u + V(x)u + \frac{a}{r^2} u = f(u) - \lambda g(u), \quad x = (y,z) \in \R^K \times \R^{N-K}, \ r = |y|,
\end{equation}
with $N=3$, $K=2$, $a=1$, where $\Delta = \frac{\partial ^2}{\partial r^2} + \frac{1}{r} \frac{\partial}{\partial r} + \frac{\partial^2}{\partial x_3^2}$ is the 3-dimensional Laplacian operator in cylindrically symmetric coordinates $(r, x_3)$, and nonlinear terms are described by the following relation
\begin{equation}\label{eq:h}
h(\mathbf{E}) = f(\alpha)w - \lambda g(\alpha)w,
\end{equation}
where $\mathbf{E} = \alpha w$ for some $w \in \R^3$, $|w|=1$, $\alpha \in \R$ and $h$ is the nonlinear term in \eqref{eq:maxwell}. This equivalence also holds for weak solutions (see \cite{Bi, GMS}). We note that equations of the form \eqref{eq:schroedinger2} with $V \equiv 0$ and $g \equiv 0$ have been studied in \cite{BBR, BMS}. 

It is known that in this setting, the total electromagnetic energy
$$
\cL(t) := \frac12 \int_{\R^3} \cE \cD + \cB \cH \, dx
$$
is finite and constant (does not depend on $t$), see \cite[Proposition 6.3]{BB}. To study the more general equation \eqref{eq:schroedinger2}, we assume $N > K \geq 2$, $a \in \R$ and
\begin{enumerate}
[label=(V\arabic{*}),ref=V\arabic{*}]\setcounter{enumi}{1}
    \item \label{V2} $V \in L^\infty (\R^N)$ is $\cO(K) \times \{id_{N-K}\}$ invariant and $\mathbb{Z}^{N-K}$-periodic in $z$,
    $$
    0 \not\in \sigma \left( -\Delta + \frac{a}{r^2} + V(x) \right)  \quad \mbox{and} \quad \inf \left( -\Delta + \frac{a}{r^2} + V(x) \right) < 0.
    $$
\end{enumerate}
We will use the same assumptions (\ref{F1})--(\ref{F5}), (\ref{G1})--(\ref{G3}) as in Section \ref{sect:appl1}. 

Let $\cG(K) := \cO(K) \times \{id_{N-K}\}$ and recall that $H^1_{\cG(K)} (\R^N)$ denotes the subspace of $H^1 (\R^N)$ consisting of $\cG(K)$-invariant functions. We introduce the space 
$$
X := \left\{ u \in H^1_{\cG(K)} (\R^N) \ : \  \int_{\R^N} \frac{u^2}{r^2} \, dx < \infty \right\},
$$
which admits an orthogonal splitting $X = X^+ \oplus X^-$ corresponding to the decomposition of the spectrum $\sigma \left( -\Delta + \frac{a}{r^2} + V(x) \right)$ into its positive and negative parts. Then we introduce the norm in $X^\pm$ as
$$
\|u^\pm \|^2 := \pm \int_{\R^N} |\nabla u|^2 + a \frac{u^2}{r^2} + V(x) u^2 \, dx, \quad u^\pm \in X^\pm,
$$
and we put $\|u\|^2 := \|u^+\|^2 + \|u^-\|^2$ for $u = u^+ + u^- \in X$. Hence, the energy functional associated with \eqref{eq:schroedinger2} is given by
$$
\cJ(u) := \frac12 \|u^+\|^2 - \frac12 \|u^-\|^2 - \int_{\R^N} F(u) \, dx + \lambda \int_{\R^N} G(u) \, dx, \quad u \in X.
$$
For $K > 2$ we have the following Hardy inequality (see \cite{BT})
$$
\int_{\R^N} \frac{u^2}{r^2} \, dx \leq \left( \frac{2}{K-2} \right)^2 \int_{\R^N} |\nabla u|^2 \, dx, \quad u \in H^1 (\R^N);
$$
therefore, $\int_{\R^N} \frac{u^2}{r^2} \, dx < \infty$ for every $u \in H^1 (\R^N)$, and in particular for every test function $u \in \cC_0^\infty (\R^N)$. Hence, thanks to the Palais' principle of symmetric criticality, we know that critical points of $\cJ$ are \textit{weak solutions} to \eqref{eq:schroedinger2}. For $K = 2$ it is not true that $\cC_0^\infty (\R^N) \subset \left\{ u \in H^1 (\R^N) \ : \ \int_{\R^N} \frac{u^2}{r^2} \, dx < \infty \right\}$ so, in this case, we say that $u \in X$ is a weak solution if it is a critical point of $\cJ$.

Thanks to Example \ref{ex:2} we know that $(H^1_{\cG(K)}, \Z^{N-K})$ is a dislocation space with discreteness property, and one can check that $(X, \Z^{N-K})$ is such a space as well. Then, as in Section \ref{sect:appl1}, it can be shown (\ref{A1})--(\ref{A8}) are satisfied, assuming that $\lambda > 0$ and $\rho > 0$ in (\ref{F5}) are small enough. Hence, we obtain the following result.

\begin{Th}\label{th:shroed2}
Suppose that (\ref{V2}), (\ref{F1})--(\ref{F5}), (\ref{G1})--(\ref{G3}) hold. If $\lambda > 0$ and $\rho > 0$ are sufficiently small, there are infinitely many pairs $\pm u \in X$ of geometrically distinct solutions to \eqref{eq:schroedinger2}.
\end{Th}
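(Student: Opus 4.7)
The plan is to apply the abstract multiplicity result Theorem \ref{T:mainTheorem} to the functional $\cJ$ on the space $X = X^+ \oplus X^-$ introduced above, with group $G = \Z^{N-K}$ acting by translations in the $z$-variable. The proof will then parallel the verification carried out in Section \ref{sect:appl1} for the Schrödinger application; the only structural novelties are the $\cG(K)$-symmetry and the singular Hardy weight $a/r^2$, both of which enter only into the definition of the quadratic part and of the space $X$.

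First I would establish (\ref{A1}) by checking that $(X, \Z^{N-K})$ inherits the dislocation-space structure with the discreteness property from $(H^1_{\cG(K)}(\R^N), \Z^{N-K})$ of Example \ref{ex:2}. Since the norm of $X$ is equivalent to the graph norm of $(-\Delta + a/r^2 + V)^{1/2}$ on the $\cG(K)$-invariant subspace and the weight $1/r^2$ is translation-invariant in $z$, the action of $\Z^{N-K}$ is unitary and the same profile-decomposition and discreteness arguments go through; moreover $X^\pm$, defined via the spectral decomposition of a $\Z^{N-K}$-invariant operator, are $\Z^{N-K}$-invariant and orthogonal. Assumption (\ref{A3}) is immediate from the oddness of $f,g$ and the translation invariance of $V$ in $z$. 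For (\ref{A2}) the form of $\cJ$ is clear; sequential weak-to-weak$^*$ continuity of $\cI'$ is standard (nonlinear terms are subcritical and local in $u$), and the (\ref{GWC}) property follows verbatim as in Section \ref{sect:appl1}: if $\varphi_n \stackrel{\Z^{N-K}}{\rightharpoonup} 0$ then, by the description of $\stackrel{G}{\rightharpoonup}$-convergence in Example \ref{ex:2}(ii), $\varphi_n \to 0$ in $L^p_{\cG(K)}(\R^N)$ and $L^q_{\cG(K)}(\R^N)$, and the same estimate $|\cI'(v_n)(\varphi_n)| \lesssim \varepsilon + |\varphi_n|_p + |\varphi_n|_q$ gives the conclusion.

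The verification of the geometric and compactness conditions (\ref{A5})--(\ref{A8}) and (\ref{AX}) proceeds in almost identical fashion to the four technical lemmas of Section \ref{sect:appl1}, the only inputs being the continuity of Sobolev-type embeddings $X \hookrightarrow L^s(\R^N)$ for $s \in [2, 2^*]$ (valid since the Hardy weight only strengthens the norm), the estimate \eqref{E:mu0} on $X$ (coming from $0 \notin \sigma$), and the upper/lower bounds \eqref{C-eps} on $F$ and $G$. For (\ref{A5}) and (\ref{AX}) one argues exactly as in Lemmas \ref{lem:mp} and \ref{lem:sup}, taking $\lambda$ and $r_0$ small; for (\ref{A6}) one reproduces Lemma \ref{lem:Jinfty}; for (\ref{A7}) one uses finite-dimensionality of $W \subset X^+$ together with the lower bound on $\cI(u_n^+)$ derived from \eqref{C-eps}. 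The boundedness of Palais-Smale sequences (\ref{A8}) is the longest step, but it is a direct translation of the computation in the proof of the corresponding lemma in Section \ref{sect:appl1}, splitting $I_1,I_2$ according to the threshold $\rho$ from (\ref{F5}) and invoking the smallness of $\lambda$ and $\rho$.

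The only substantive obstacle is verifying (\ref{A4}), i.e.\ the existence of at least one nontrivial critical point, for which Theorem 1.2 of \cite{BB} is invoked in Section \ref{sect:appl1}. In the present Maxwell setting, one instead needs a linking-type existence statement in the cylindrically symmetric space $X$ with the additional Hardy term $a/r^2$; this is available in the literature (cf.\ \cite{BB,BBR,BMS}) provided $\lambda,\rho$ are sufficiently small, and in our framework it can alternatively be recovered from a Mountain Pass / linking argument using (\ref{A5}), (\ref{A6}), (\ref{AX}), (\ref{A7}) together with the (GWC) property, which yields a bounded Cerami or Palais-Smale sequence whose weak limit is nonzero for small $\lambda$. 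With (\ref{A1})--(\ref{A8}) all verified, Theorem \ref{T:mainTheorem} produces infinitely many $\Z^{N-K}$-geometrically distinct critical points; since $\cJ$ is even, they come in symmetric pairs $\pm u$, and by the Palais symmetric criticality principle (for $K>2$) or by the direct definition of weak solution (for $K=2$) each such critical point is a weak solution to \eqref{eq:schroedinger2}.
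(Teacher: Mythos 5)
Your proposal follows essentially the same route as the paper: the paper likewise observes that $(X,\Z^{N-K})$ inherits the dislocation-space structure and discreteness property from Example \ref{ex:2}, verifies (\ref{A1})--(\ref{A8}) exactly as in Section \ref{sect:appl1} for small $\lambda,\rho$ (with (\ref{A4}) supplied by the existence result of \cite{BB}), and then applies Theorem \ref{T:mainTheorem}, treating the $K=2$ versus $K>2$ notion of weak solution just as you do. Your alternative suggestion for obtaining (\ref{A4}) from a linking-type Cerami-sequence argument is precisely what the paper's Appendix (Theorem \ref{Cor:appendix}, with (\ref{A4N}) and (\ref{A9N})) formalizes, so no essential step is missing.
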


To apply the foregoing theorem to the curl-curl problem \eqref{eq:maxwell}, we define the energy functional $\cJ_{curl} : H^1 (\R^3; \R^3) \rightarrow \R$ by
$$
\cJ_{curl} (\mathbf{E}) := \frac12 \int_{\R^3} | \nabla \times \mathbf{E} |^2 + V(x) |\mathbf{E}|^2 \, dx - \int_{\R^N} H(\mathbf{E}) \, dx,
$$
where $H(\mathbf{E}) := \int_0^1 h(t\mathbf{E}) \cdot \mathbf{E} \, dt$. Then $\cJ_{curl}$ is of $\cC^1$-class and its critical points are weak solutions to \eqref{eq:maxwell}. From \cite[Theorem 1.1]{Bi}, under our assumptions, we have the following.

\begin{Prop}\label{prop:equiv}
Let $N = 3$, $K=2$, $a=1$. If $\mathbf{E} \in H^1 (\R^3; \R^3)$ is a weak solution to \eqref{eq:maxwell}, with $h$ given by \eqref{eq:h}, of the form \eqref{eq:form} with some cylindrically symmetric $u$, then $u \in X$ is a weak solution to \eqref{eq:schroedinger2}. If $u \in X$ is a weak solution to \eqref{eq:schroedinger2}, then $\mathbf{E}$ given by the formula \eqref{eq:form} belongs to $H^1 (\R^3; \R^3)$ and is a weak solution to \eqref{eq:maxwell}. In addition, $\div \mathbf{E} = 0$ and $\cJ_{curl} (\mathbf{E}) = \cJ(u)$.
\end{Prop}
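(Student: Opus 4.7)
The plan is to verify the proposition by direct computation in cylindrical coordinates, reducing the curl--curl structure to the scalar operator with a Hardy term, and then transferring weak formulations via cylindrically symmetric test functions. Since the reference \cite[Theorem 1.1]{Bi} handles exactly this equivalence, the underlying argument is essentially bookkeeping, but it is worth outlining.

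\textbf{Step 1 (geometry of the ansatz).} Write $\mathbf{E} = u(r,x_3)\,\hat{\theta}$, where $\hat{\theta} = \frac{1}{r}(-x_2,x_1,0)$ is the standard azimuthal unit vector. From $|\hat{\theta}|=1$ I get $|\mathbf{E}|^2 = u^2$. A direct coordinate computation (or the curl formula in cylindrical coordinates applied to a purely azimuthal field) gives
\[
\nabla\times\mathbf{E} \;=\; -u_{x_3}\,\hat{r} \;+\; \Bigl(u_r + \tfrac{u}{r}\Bigr)\hat{z},
\qquad
|\nabla\times\mathbf{E}|^2 \;=\; u_r^2 + u_{x_3}^2 + \tfrac{2u u_r}{r} + \tfrac{u^2}{r^2}.
\]
An analogous calculation gives $\div\mathbf{E}=0$ because the two horizontal partial derivatives cancel by symmetry. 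Integrating $|\nabla\times\mathbf{E}|^2$ over $\bR^3$ in cylindrical coordinates, the cross-term $\int \frac{2uu_r}{r}\,dx = 4\pi\int_{\bR}\int_0^{\infty}(u^2)_r\,dr\,dx_3$ is a boundary term that vanishes since $u(0,x_3)=0$ (forced by finiteness of the Hardy integral $\int u^2/r^2\,dx$) and $u(\cdot,x_3)\to 0$ at infinity. Hence
\[
\int_{\bR^3}|\nabla\times\mathbf{E}|^2\,dx \;=\; \int_{\bR^3}\Bigl(|\nabla u|^2 + \tfrac{u^2}{r^2}\Bigr)dx,
\]
which is exactly the quadratic part of $\cJ$ at $u$ with $a=1$.

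\textbf{Step 2 (nonlinear part and identity of functionals).} Under the factorization $\mathbf{E}=\alpha w$, $|w|=1$, one uses the substitution $s=t\alpha$ in
\[
H(\mathbf{E}) \;=\; \int_0^1 h(t\mathbf{E})\cdot\mathbf{E}\,dt \;=\; \int_0^1 \bigl[f(t\alpha)-\lambda g(t\alpha)\bigr]\alpha\,dt \;=\; F(\alpha)-\lambda G(\alpha),
\]
and since $F,G$ are even (because $f,g$ are odd) and $\alpha=\pm|u|$, this equals $F(u)-\lambda G(u)$. Combined with $V(x)|\mathbf{E}|^2=V(x)u^2$ and Step~1, this yields $\cJ_{curl}(\mathbf{E})=\cJ(u)$.

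\textbf{Step 3 (equivalence of weak solutions).} For the two implications I would plug cylindrically symmetric test vector fields $\boldsymbol{\varphi} = \psi(r,x_3)\,\hat{\theta}$ with $\psi\in\cC_0^\infty(\bR^3)$ cylindrically symmetric into the weak formulation of \eqref{eq:maxwell}, and repeat Step~1 with $(\mathbf{E},\mathbf{E})$ replaced by $(\mathbf{E},\boldsymbol{\varphi})$ to obtain exactly the weak formulation of \eqref{eq:schroedinger2} tested against $\psi$. Conversely, if $u\in X$ is a weak solution, one first checks that the corresponding $\mathbf{E}$ lies in $H^1(\bR^3;\bR^3)$ (which requires the Hardy term $u/r\in L^2$, available precisely because $u\in X$), and then applies Palais' principle of symmetric criticality for the group $\mathcal{SO}(2)\times\{\id\}$ acting on $\mathbf{E}$ to promote the identity against cylindrically symmetric test fields to the full curl--curl weak formulation.

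\textbf{Main obstacle.} The delicate point is Step~3, specifically the case $K=2$: the Hardy weight $1/r^2$ is critical in dimension three, so $\cC_0^\infty(\bR^3)$ is \emph{not} dense in the natural energy space, and one cannot freely test against arbitrary smooth compactly supported scalar fields. This is why the space $X$ is defined by directly imposing integrability of $u^2/r^2$, and the weak-solution notion for \eqref{eq:schroedinger2} must be taken in the sense of critical points of $\cJ$. Handling both the correct density of cylindrically symmetric azimuthal test fields in the curl--curl energy space, and the corresponding density in $X$, is the technical content that \cite[Theorem 1.1]{Bi} provides and which I would otherwise need to carry out carefully.
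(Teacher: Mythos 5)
Your proposal is correct and follows essentially the same route as the paper: the paper's entire proof of this proposition is the citation of \cite[Theorem 1.1]{Bi}, which is precisely the result you invoke for the delicate $K=2$ step (density/symmetric criticality in the critical Hardy setting), and your Steps 1--2 merely make explicit the computations behind that citation. As a minor remark, the vanishing of the cross term $\int 2uu_r/r\,dx$ is obtained more cleanly from the identity $\int_{\R^3}|\nabla\times\mathbf{E}|^2\,dx+\int_{\R^3}|\div\mathbf{E}|^2\,dx=\int_{\R^3}|\nabla\mathbf{E}|^2\,dx$ for $\mathbf{E}\in H^1(\R^3;\R^3)$ together with $\div\mathbf{E}=0$, rather than the heuristic boundary-term argument.
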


We note the following fact.

\begin{Lem}\label{lem:orbitsEquiv}
Let $N = 3$, $K=2$, $a=1$. Solutions $u_1, u_2 \in X$ are geometrically distinct if and only if $\mathbf{E}_1, \mathbf{E}_2 \in H^1(\R^3; \R^3)$ are geometrically distinct.
\end{Lem}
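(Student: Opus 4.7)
The plan is to exhibit the map \eqref{eq:form} as an equivariant bijection and then reduce the equivalence to bookkeeping. Define $\Phi \colon X \to H^1(\R^3;\R^3)$ by $\Phi(u)(x_1,x_2,x_3) := \frac{u(r,x_3)}{r}(-x_2, x_1, 0)^T$. Proposition \ref{prop:equiv} ensures that $\Phi$ carries weak solutions in $X$ bijectively onto weak solutions of \eqref{eq:maxwell} of the prescribed form. Injectivity is transparent: if $\Phi(u_1) = \Phi(u_2)$, evaluating at $(r,0,x_3)$ with $r>0$ gives $u_1(r,x_3) = u_2(r,x_3)$, hence $u_1 = u_2$.

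Next I would verify that $\Phi$ intertwines the relevant symmetries. The group on $X$ consists of $\Z$-translations $T_{z_0}$ in the $x_3$-direction together with the even involution $u \mapsto -u$; under (\ref{V2}) the symmetry group of \eqref{eq:maxwell} on $H^1(\R^3;\R^3)$ is generated by $\cO(2)$-rotations in the $(x_1,x_2)$-plane, $\Z$-translations in $x_3$, and the sign involution $\mathbf{E} \mapsto -\mathbf{E}$. A direct substitution gives $\Phi(T_{z_0}u) = T_{z_0}\Phi(u)$ and $\Phi(-u) = -\Phi(u)$. For a rotation $R \in SO(2)$, using that the planar $90^\circ$-rotation $J$ commutes with every element of $SO(2)$,
\[
R \cdot \Phi(u)(R^{-1}x) = \frac{u(r,x_3)}{r}\, R J R^{-1} (x_1, x_2, 0)^T = \frac{u(r,x_3)}{r}(-x_2, x_1, 0)^T = \Phi(u)(x),
\]
so $SO(2)$ fixes $\Phi(u)$ pointwise. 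A reflection $\sigma \in \cO(2) \setminus SO(2)$ anti-commutes with $J$, and hence $\sigma \cdot \Phi(u) = -\Phi(u) = \Phi(-u)$; so the $\cO(2)$-part collapses to at most a sign change on $\ran(\Phi)$.

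Both implications of the lemma then follow. If $u_1 = \pm T_{z_0} u_2$ for some $z_0 \in \Z$, then $\mathbf{E}_1 = \Phi(u_1) = \pm T_{z_0}\Phi(u_2) = \pm T_{z_0}\mathbf{E}_2$, so $\mathbf{E}_1$ and $\mathbf{E}_2$ share an orbit. Conversely, suppose $\mathbf{E}_1 = g \mathbf{E}_2$ for some $g$ in the symmetry group; decompose $g$ into a sign, a rotation and a translation. By the previous paragraph the rotation acts trivially on $\Phi(u_2)$, yielding $\Phi(u_1) = \pm T_{z_0}\Phi(u_2) = \Phi(\pm T_{z_0} u_2)$; injectivity of $\Phi$ forces $u_1 = \pm T_{z_0} u_2$, so $u_1$ and $u_2$ share an orbit in $X$.

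The only real obstacle is pinning down the group acting on $H^1(\R^3;\R^3)$ precisely, but since the in-plane rotations collapse to at most a sign on fields of the form \eqref{eq:form}, the equivalence reduces to the manifest bijection of $\Z$-orbits carried by the scalar profile $u$.
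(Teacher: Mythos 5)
Your proof is correct, but it takes a more elaborate route than the paper and proves a slightly different (stronger) statement. The paper's own proof is the minimal one: geometric distinctness on both sides is taken with respect to the \emph{same} group $G=\Z^{N-K}=\Z$ of translations in $x_3$ furnished by the dislocation-space framework, and the argument is a direct componentwise comparison in \eqref{eq:form}: if $\mathbf{E}_1$ and $\mathbf{E}_2$ agree up to a shift $z\in\Z$, then equating the first two components forces $u_1(r,x_3)=u_2(r,x_3+z)$ a.e., and conversely. Your argument packages this same mechanism as injectivity plus translation-equivariance of the profile map $\Phi$; what you do differently is to enlarge the symmetry group on the Maxwell side to $\cO(2)$-rotations about the $x_3$-axis, $\Z$-translations and the sign, and then show that in-plane rotations fix fields of the form \eqref{eq:form} while reflections act as $-\mathrm{id}$ (your computations $RJR^{-1}=J$ for $R\in SO(2)$ and $\sigma J\sigma^{-1}=-J$ are correct, understood blockwise for $R\oplus 1$ and $J\oplus 0$). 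This buys a genuinely stronger conclusion — the fields $\mathbf{E}$ are geometrically distinct even modulo rotations and reflections — whereas the paper only needs, and only claims, distinctness modulo the $\Z$-action. The price is a small convention mismatch: the paper's orbits $\cO(u)=\{gu: g\in G\}$ do not contain $-u$ (solutions are counted as pairs $\pm u$ in the statements, e.g.\ Theorem \ref{th:shroed2}), so your version quotients by $\{\pm 1\}$ on both sides; since it is applied consistently on both sides, this is harmless for transferring multiplicity. Two cosmetic points: ``evaluating at $(r,0,x_3)$'' should be phrased as an a.e.\ identity of the profiles (these are $H^1$ functions), exactly as in the paper's component comparison; and the decomposition $g=\pm R\,T_{z_0}$ you use is legitimate because translations along $x_3$ commute with rotations about the $x_3$-axis and the sign is central.
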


\begin{proof}
Suppose that $\cO(u_1) = \cO(u_2)$. Then $u_1(r,x_3) = u_2 (r, x_3 + z)$ for some $z \in \Z$ and a.e. $r > 0$, $x_3 \in \R$. Then clearly
$$
\mathbf{E}_1(r, x_3) = \frac{u_1(r, x_3)}{r} \left( \begin{array}{c} -x_2 \\ x_1 \\ 0 \end{array} \right) = \frac{u_2(r, x_3 + z)}{r} \left( \begin{array}{c} -x_2 \\ x_1 \\ 0 \end{array} \right) = \mathbf{E}_1(r, x_3 + z)
$$
and $\cO(\mathbf{E_1}) = \cO(\mathbf{E}_2)$. On the other hand, suppose that $\mathbf{E}_1, \mathbf{E}_2$ are of the form \eqref{eq:form} and $\cO(\mathbf{E_1}) = \cO(\mathbf{E}_2)$, which means that
$$
\frac{u_1(r, x_3)}{r} \left( \begin{array}{c} -x_2 \\ x_1 \\ 0 \end{array} \right) = \frac{u_2(r, x_3 + z)}{r} \left( \begin{array}{c} -x_2 \\ x_1 \\ 0 \end{array} \right)
$$
for some $z \in \Z$ and a.e. $r > 0$, $x_3 \in \R$. In particular 
\begin{align*}
\left\{ \begin{array}{l}
-x_2 u_1 (r, x_3) = -x_2 u_2 (r, x_3 + z), \\
x_1 u_1 (r, x_3) = x_1 u_2 (r, x_3 + z).
\end{array} \right.
\end{align*}
Thus $u_1(r,x_3) = u_2 (r, x_3 + z)$ for a.e. $r > 0$, $x_3 \in \R$. Hence $\cO(u_1) = \cO(u_2)$.
\end{proof}

As a straightforward consequence of Theorem \ref{th:shroed2}, Proposition \ref{prop:equiv}, and Lemma \ref{lem:orbitsEquiv}, we obtain the following result.

\begin{Th}
Suppose that (\ref{V2}), (\ref{F1})--(\ref{F5}), (\ref{G1})--(\ref{G3}) hold. If $\lambda > 0$ and $\rho > 0$ are sufficiently small, there are infinitely many pairs $\pm \mathbf{E} \in H^1(\R^3; \R^3)$ of the form \eqref{eq:form}, in particular with $\div \mathbf{E} = 0$, of geometrically distinct solutions to \eqref{eq:maxwell} with $h$ given by \eqref{eq:h}.
\end{Th}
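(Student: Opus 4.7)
The proof will be essentially a combination of the three preceding ingredients: Theorem \ref{th:shroed2} (multiplicity for the scalar equation), Proposition \ref{prop:equiv} (equivalence between solutions of the curl-curl problem and the scalar problem), and Lemma \ref{lem:orbitsEquiv} (equivalence of orbits). The plan is first to extract infinitely many solutions in the scalar setting and then transport them to the vector setting via the explicit ansatz \eqref{eq:form}.

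First I would specialize the parameters in the scalar problem to $N=3$, $K=2$, $a=1$, which is precisely the regime covered by Proposition \ref{prop:equiv} and is admissible in Theorem \ref{th:shroed2} (the hypothesis $N>K\geq 2$ is met). The cylindrical symmetry group $\cG(2)=\cO(2)\times\{\id\}$ is nontrivial, so the functional setting of Section \ref{sect:appl2} applies, and (\ref{V2}) plus (\ref{F1})--(\ref{F5}), (\ref{G1})--(\ref{G3}) are exactly the hypotheses needed. Thus, for $\lambda>0$ and $\rho>0$ sufficiently small, Theorem \ref{th:shroed2} yields a sequence of pairs $\pm u_k\in X$, $k\in\N$, of geometrically distinct weak solutions to \eqref{eq:schroedinger2}.

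Next, for each such $u_k$ I define the vector field
\[
\mathbf{E}_k(x)=\frac{u_k(r,x_3)}{r}\bigl(-x_2,\,x_1,\,0\bigr)^T,\qquad r=\sqrt{x_1^2+x_2^2}.
\]
By the second assertion of Proposition \ref{prop:equiv}, each $\mathbf{E}_k$ belongs to $H^1(\R^3;\R^3)$, solves \eqref{eq:maxwell} weakly with $h$ given by \eqref{eq:h}, and satisfies $\div\mathbf{E}_k=0$. Since the map $u\mapsto\mathbf{E}$ in \eqref{eq:form} is linear, the pair $-u_k$ produces $-\mathbf{E}_k$, so we indeed obtain pairs $\pm\mathbf{E}_k$.

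Finally, I need to verify that the $\mathbf{E}_k$'s are pairwise geometrically distinct, i.e. lie on distinct $\Z$-orbits under translation in the $x_3$-direction. This is precisely the content of Lemma \ref{lem:orbitsEquiv}: two solutions $u_j,u_k\in X$ share an orbit if and only if the corresponding $\mathbf{E}_j,\mathbf{E}_k$ do. Since the $u_k$'s are geometrically distinct by construction, so are the $\mathbf{E}_k$'s, which concludes the proof. There is no real obstacle here: the only thing one should be mindful of is the parameter smallness in $\lambda$ and $\rho$, which is handled uniformly by Theorem \ref{th:shroed2}, and the fact that $\pm u_k$ are distinct nontrivial critical points (which is ensured by Theorem \ref{T:mainTheorem} applied in the even setting, as $0$ is excluded from $\crit(\cJ)\setminus\{0\}$).
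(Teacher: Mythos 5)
Your proposal is correct and follows exactly the paper's argument: the paper deduces this theorem as a straightforward consequence of Theorem \ref{th:shroed2} (applied with $N=3$, $K=2$, $a=1$), Proposition \ref{prop:equiv} for transporting solutions and divergence-freeness via the ansatz \eqref{eq:form}, and Lemma \ref{lem:orbitsEquiv} for preservation of geometric distinctness. Nothing is missing.
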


\appendix

\section{Existence of critical points - an abstract formulation}
\label{appendix}

Suppose that, in lieu of (\ref{A4}), we assume that 
\begin{enumerate}
[label=(A4'),ref=A4']
    \item\label{A4N} there are $\delta > 0$, $\rho > 0$, and a nonempty set $\cP \subset X \setminus X^-$ such that for every $u \in \cP$ there is radius $R = R(u) > \rho$ with
    $$
    \inf_{\cS_\rho^+} \cJ > \max \left\{ \sup_{\partial M(u)} \cJ, \sup_{\triple{v} \leq \delta} \cJ(v) \right\},
    $$
    where  
    $$
    M(u) := \{ tu + v^- \ : \ v^- \in X^-, \ t \geq 0, \ \|tu+v^-\| \leq R \}.
    $$
\end{enumerate}
Moreover, instead of (\ref{A8}), we assume a bit stronger condition:
\begin{enumerate}
[label=(A9'),ref=A9']
    \item\label{A9N} for any $\beta \in \R$, there is $M_\beta > 0$ such that any sequence $(u_n) \subset X$ with the properties
    $$
    \limsup_{n\to\infty} \cJ(u_n) \leq \beta, \quad (1+\|u_n\|) \cJ'(u_n) \to 0
    $$
    satisfies
    $$
    \limsup_{n\to\infty} \|u_n\| \leq M_\beta.
    $$
\end{enumerate}
Then the following holds true.

\begin{Th} \label{Cor:appendix}
Suppose that $(X, G)$ is a dislocation space with $G$ acting unitarily on $X$. Suppose that $\cJ$ is a nonlinear functional of the form \eqref{E:introJ} with $X = X^+ \oplus X^-$, where $X^\pm$ are $G$-invariant. If $\cI$ is $G$-invariant, $\cC^1$-class with $\cI(0) = 0$, $\cI'$ being sequentially weak-to-weak* continuous satisfying (\ref{GWC}) and (\ref{A4N}), (\ref{A9N}) hold, then there exists a nontrivial critical point of $\cJ$.
\end{Th}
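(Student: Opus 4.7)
The plan is to run a linking-type min-max argument in the admissible-map framework of Section \ref{sect:funct}, extracting a bounded Cerami sequence via a deformation parallel to Lemma \ref{L:deformationLemma}, and then recovering a nontrivial critical point from the profile decomposition.

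First I would fix any $u_0\in\cP$, set $M:=M(u_0)$, and consider the family
\[
\Gamma:=\{h:M\to X\ :\ h\text{ is admissible},\ \cJ(h(v))\leq\cJ(v)\text{ for all }v\in M,\ h|_{\partial M}=\id\}.
\]
Define $c:=\inf_{h\in\Gamma}\sup_{v\in M}\cJ(h(v))$. By a standard Benci-pseudoindex/degree linking argument (using that $u_0\notin X^-$, $X^+$ is infinite dimensional, and $X^+,X^-$ are invariant under the relevant projection), every $h\in\Gamma$ satisfies $h(M)\cap \cS^+_\rho\neq\emptyset$, which together with (\ref{A4N}) gives
\[
c\geq \inf_{\cS^+_\rho}\cJ>\max\bigl\{\sup_{\partial M}\cJ,\sup_{\triple{v}\leq\delta}\cJ(v)\bigr\}>0.
\]
Since $M$ is bounded and $\cJ$ is continuous, $\id\in\Gamma$ forces $c<\infty$.

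Next I would show that $c$ is a Cerami value. Assuming for contradiction that there exists $\varepsilon>0$ with $(1+\|u\|)\|\nabla\cJ(u)\|\geq\sqrt{\varepsilon}$ for every $u\in\cJ^{c+2\varepsilon}_{c-2\varepsilon}$, I would build a $\tau$-locally $\tau$-Lipschitzian, $\tau$-locally finite-dimensional pseudogradient vector field $V$ along the lines of Step II in the proof of Lemma \ref{L:deformationLemma}; here the oddness step is dropped and the relevant weighting factor $\psi$ vanishes on a low sublevel $\cJ^{c-\varepsilon}$ (which, by (\ref{A4N}), contains $\partial M$ and a $\triple{\cdot}$-neighborhood of $0$). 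Global existence of the generated flow $\eta(\cdot,t)$ on $\cJ^{c+\varepsilon}$ is verified by essentially the same argument as in the Claim inside the proof of Lemma \ref{L:deformationLemma}, with the role of the finite critical set $\cF$ replaced by the Cerami-type a priori estimate. Then $\tilde h:=\eta(\cdot,1)\in\Gamma$ and $\tilde h(\cJ^{c+\varepsilon})\subset\cJ^{c-\varepsilon}$, so composing $\tilde h$ with any $h\in\Gamma$ with $\sup_M\cJ(h(v))<c+\varepsilon$ contradicts the definition of $c$. This yields a Cerami sequence $(u_n)\subset X$ with $\cJ(u_n)\to c$ and $(1+\|u_n\|)\cJ'(u_n)\to 0$, which is bounded by (\ref{A9N}).

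Finally I would apply Theorem \ref{T:tintarev} and Corollary \ref{cor:1} to $(u_n)$. If at least one profile $w^k\neq 0$ arises, it is a nontrivial critical point of $\cJ$ and we are done. Otherwise $K=0$, hence $u_n\stackrel{G}{\rightharpoonup}0$ and, by Proposition \ref{P:D-convergence_of_projections}, $u_n^\pm\stackrel{G}{\rightharpoonup}0$. Testing the Cerami condition against the bounded direction $u_n^+-u_n^-$ gives
\[
o(1)=\cJ'(u_n)(u_n^+-u_n^-)=\|u_n\|^2-\cI'(u_n)(u_n^+-u_n^-),
\]
and (\ref{GWC}) forces $\cI'(u_n)(u_n^+-u_n^-)\to 0$, so $u_n\to 0$ in $X$ and $\cJ(u_n)\to\cJ(0)=0$, contradicting $c>0$. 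The main obstacle I expect is the deformation step: in contrast to Lemma \ref{L:deformationLemma} there is no symmetry to exploit and no finite list of critical orbits to steer the flow around, so global existence of the flow and its compatibility with the boundary condition $h|_{\partial M}=\id$ rely crucially on combining the strengthened Cerami bound (\ref{A9N}) with the pseudogradient construction, and this technical bookkeeping is the delicate part of the argument.
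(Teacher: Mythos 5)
Your closing step (profile decomposition, then in the vanishing case testing $\cJ'(u_n)$ against $u_n^+-u_n^-$ and invoking (\ref{GWC}) and Proposition \ref{P:D-convergence_of_projections}) is sound and essentially coincides with the paper's conclusion, except that the paper contradicts the lower bound $\triple{u_n}\geq \delta/2$ rather than the positive level. The genuine gap is in the first, and central, half: the existence of a bounded Cerami sequence. The paper does not construct any deformation here at all; it simply invokes the linking theorem \cite[Theorem 2.1]{BB}, for which assumption (\ref{A4N}) is tailor-made, and that theorem already delivers a sequence with $\sup_n\cJ(u_n)\leq c$, $(1+\|u_n\|)\cJ'(u_n)\to 0$ \emph{and} $\inf_n\triple{u_n}\geq\delta/2$ (this is exactly why the condition $\inf_{\cS^+_\rho}\cJ>\sup_{\triple{v}\leq\delta}\cJ$ appears in (\ref{A4N}); in your route it is never used). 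You instead propose to re-prove this linking theorem by a Kryszewski--Szulkin-type deformation, and that is precisely where your argument is only asserted, not carried out.

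Concretely, the steps you borrow from Lemma \ref{L:deformationLemma} are not available under the hypotheses of Theorem \ref{Cor:appendix}. Step II's $\tau$-continuity of $v\mapsto\langle\nabla\cJ(v),w(u)\rangle$ uses (\ref{A6}) to get boundedness of $Pv_n$ along $\tau$-convergent sequences, and the functional is not assumed $\tau$-upper-semicontinuous, so neither the classical KS machinery nor the paper's substitute applies verbatim; (\ref{A6}), (\ref{AX}), evenness and (\ref{A8}) are simply not assumed here. More seriously, the global-existence Claim in Lemma \ref{L:deformationLemma} hinges on the contradiction hypothesis \eqref{F-finite} (finitely many critical orbits), the finite set $[\cF^+,\ell]$ and Lemma \ref{L:PS-approximation}; your suggestion to replace this by ``the Cerami-type a priori estimate'' does not work as stated: negating the Cerami condition only gives $\|\nabla\cJ(u)\|\gtrsim(1+\|u\|)^{-1}$, so $\|w(u_j)\|\lesssim 1+\|u_j\|$, and since the points $u_j$ are only $\triple{\cdot}$-close to the trajectory, their $X^-$-components (hence $\|u_j\|$) are uncontrolled, so no linear-growth bound on the vector field follows and blow-up of the flow in finite time is not excluded. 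Compatibility of the flow with $h|_{\partial M}=\id$ and the intersection property $h(M)\cap\cS^+_\rho\neq\emptyset$ are also quoted as ``standard'' rather than proved. In short, the decisive technical content of your proposal is exactly the part you flag as delicate and leave open, whereas the paper's proof sidesteps it entirely by citing \cite[Theorem 2.1]{BB} and then finishes in a few lines.
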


\begin{proof}
From \cite[Theorem 2.1]{BB} there exists a sequence $(u_n) \subset X$ such that
$$
\sup_n \cJ(u_n) \leq c, \quad (1+\|u_n\|) \cJ'(u_n) \to 0 \mbox{ in } X^*, \quad \inf_n \triple{u_n} \geq \frac{\delta}{2}
$$
for some $c > 0$. Thanks to (\ref{A9N}), $(u_n)$ is bounded. Suppose that $u_n \stackrel{G}{\weakto} 0$, namely $\sup_{g \in G} \langle u_n, g \varphi \rangle \to 0$ for any $\varphi \in X$. Since $\cJ'(u_n)(u_n^\pm) \to 0$, using Proposition \ref{P:D-convergence_of_projections} and (\ref{GWC}), we get
$$
\|u_n^{\pm}\|^2 = \pm \cI'(u_n)(u_n^{\pm}) + o(1) \to 0.
$$
This is a contradiction with $\delta / 2 \leq \triple{u_n} \leq \|u_n\| \to 0$. Hence, up to choosing a subsequence, we can find $g_n \in G$ such that $v_n := g_n u_n \weakto v \neq 0$. Therefore, $\| \nabla \cJ (v_n) \| = \| \nabla \cJ(g_n u_n) \| = \| g_n \nabla \cJ(u_n)\| = \|\nabla \cJ(u_n)\| \to 0$, since $G$ acts on $X$ by isometries. Thus $\cJ'(v_n) \to 0$ and from the weak-to-weak* continuity of $\cJ'$ we get that $\cJ'(v) = 0$, so $\critJ \setminus \{0\} \neq \emptyset$.
\end{proof}

Clearly, under the assumptions of Theorem \ref{Cor:appendix}, there exists a nontrivial $G$-orbit of critical points of $\cJ$.
Combining Theorem \ref{Cor:appendix} and Theorem \ref{T:mainTheorem}, we obtain the following result.

\begin{Th}
Under the assumptions (\ref{A1})--(\ref{A3}), (\ref{A4N}), (\ref{A5})--(\ref{A7}), (\ref{A9N}) the functional $\cJ$ has infinitely many geometrically distinct critical points.
\end{Th}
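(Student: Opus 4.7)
The plan is to obtain the result by combining Theorem \ref{Cor:appendix} (which provides one nontrivial critical point) with a Cerami-type adaptation of Theorem \ref{T:mainTheorem}. First, I would check that the hypotheses of Theorem \ref{Cor:appendix} are in force under our assumptions: (A1)--(A3) supply the dislocation-space structure, the $G$-invariant splitting $X^+\oplus X^-$, the decomposition \eqref{E:introJ}, and the standing regularity of $\cI$ (including weak-to-weak$^*$ continuity and \eqref{GWC}); (A4') and (A9') are directly assumed. Thus Theorem \ref{Cor:appendix} applies and produces some $u^*\in\critJ\setminus\{0\}$, so that (A4) is promoted from (A4'). Consequently (A1)--(A7) all hold, and only (A8) has been replaced by the weaker Cerami condition (A9').

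The remaining task is to run the proof of Theorem \ref{T:mainTheorem} with (A9') in place of (A8). Inspecting that argument, (A8) is used only through Corollary \ref{cor:1}(ii)/Lemma \ref{L:PS-approximation}, to bound the sequences $(u_{j(m)})$ generated in Step II of Lemma \ref{L:deformationLemma} by $\|w(u_{j(m)})\|\to\infty$. The classical fix is to replace the pseudo-gradient
\[
w(u)\;=\;\frac{2\nabla\cJ(u)}{\|\nabla\cJ(u)\|^2} \qquad \text{by} \qquad w(u)\;=\;\frac{2\nabla\cJ(u)}{(1+\|u\|)\,\|\nabla\cJ(u)\|^2}.
\]
With this Cerami-weighted field, $\|w(u_{j(m)})\|\to\infty$ forces $(1+\|u_{j(m)}\|)\|\nabla\cJ(u_{j(m)})\|\to 0$; since the construction guarantees $\cJ(u_{j(m)})\in[-\alpha,\xi+2]$, (A9') now supplies the uniform bound $\limsup\|u_{j(m)}\|\leq M_{\xi+2}$. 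In particular $\|\nabla\cJ(u_{j(m)})\|\to 0$, so $(u_{j(m)})$ is a bounded Palais--Smale sequence, and the profile decomposition (Theorem \ref{T:tintarev}) together with Corollaries \ref{cor:1} and \ref{C:Tintarev_in_norm} yields $\|u_{j(m)}-[\cF,\ell]\|\to 0$ exactly as in Lemma \ref{L:PS-approximation}. Everything downstream — the Case 1/Case 2 dichotomy for global existence of the flow, the admissibility of $h$, the pseudo-index framework, and the Case (i)/(ii) minimax argument in the main proof — transcribes verbatim.

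The main obstacle is checking that this modified vector field still meets the technical demands of Step II: oddness, local and $\tau$-local $\tau$-Lipschitz regularity (the extra factor $(1+\|u\|)^{-1}$ is $\tau$-locally Lipschitz as a function of $u^+$ on bounded pieces), $\tau$-local finite dimensionality, and above all a usable descent estimate. Here $\langle\nabla\cJ(u),w(u)\rangle=2/(1+\|u\|)$ is positive but $u$-dependent, so one must ensure a uniform lower bound along trajectories. Arguing by contradiction, a new Cerami version of Lemma \ref{L:PS-approximation} shows that on $\cJ^{\xi+2}_{-\alpha}\setminus\cN_8$ one has $(1+\|u\|)\|\nabla\cJ(u)\|\geq\delta'>0$; hence $\|w(u)\|\leq 2/\delta'$ on this set, which via a Gronwall estimate yields a uniform bound $\|\eta(\tilde u,t)\|\leq R$ on $[0,1]$ for $\tilde u\in\cJ^{\xi+1}$. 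Consequently $\langle\nabla\cJ,w\rangle\geq 2/(1+R)$ along every trajectory in the relevant region, and one only has to rescale the constant $\varepsilon$ in Lemma \ref{L:deformationLemma} by this factor $1/(1+R)$ for the descent contradictions to go through. This completes the passage from (A8) to (A9'), and Theorem \ref{T:mainTheorem} delivers the infinitely many geometrically distinct critical orbits.
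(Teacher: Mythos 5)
Your first step coincides with the paper's: Theorem \ref{Cor:appendix} is applied under (\ref{A1})--(\ref{A3}), (\ref{A4N}), (\ref{A9N}) to produce a nontrivial critical orbit, i.e.\ to upgrade (\ref{A4N}) to (\ref{A4}). From there the paper simply combines this with Theorem \ref{T:mainTheorem}, treating (\ref{A9N}) as an adequate stand-in for (\ref{A8}) (it even calls it ``a bit stronger''). You instead observe that (\ref{A9N}) only asserts boundedness of Cerami-type sequences, a smaller class than the Palais--Smale sequences governed by (\ref{A8}), and therefore undertake to re-run the deformation argument with a Cerami-weighted field; as a matter of logic your comparison is the right one (boundedness of PS sequences implies boundedness of Cerami sequences, not conversely), and indeed (\ref{A8}) enters the proof of Theorem \ref{T:mainTheorem} through Lemma \ref{L:PS-approximation}, the lower bounds $\delta,\delta_1$, and the sequences $(u_{j(m)})$ in Lemma \ref{L:deformationLemma}. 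So the extra work is not misplaced --- but the adaptation you sketch does not close.

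The gap is in the descent estimate. With $w(u)=2\nabla\cJ(u)/\bigl((1+\|u\|)\|\nabla\cJ(u)\|^2\bigr)$ one has $\langle\nabla\cJ(u),w(u)\rangle=2/(1+\|u\|)$, and you restore uniformity by claiming a Gronwall bound $\|\eta(\tilde u,t)\|\le R$ valid for \emph{all} $\tilde u\in\cJ^{\xi+1}$. No such uniform $R$ can exist: already $\eta(\tilde u,0)=\tilde u$, and by (\ref{A6}) the sublevel set $\cJ^{\xi+1}$ contains points of arbitrarily large norm (take $u\in X^-$ with $\|u\|\to\infty$), while nothing in (\ref{A1})--(\ref{A7}), (\ref{A9N}) forces the strips $\cJ_{d-\varepsilon}^{d+\varepsilon}$ to be bounded (in the applications of Sections \ref{sect:appl1}--\ref{sect:appl2} they are not). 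A bounded-speed estimate only yields $\|\eta(\tilde u,t)\|\le\|\tilde u\|+2t/\delta'$, so the energy loss over unit time degenerates like $1/(1+\|\tilde u\|)$; consequently, for any fixed $\varepsilon>0$, property (i) of Lemma \ref{L:deformationLemma}, $h(\cJ^{d+\varepsilon}\setminus\cN)\subset\cJ^{d-\varepsilon}$, fails for starting points of large norm, and the ``rescale $\varepsilon$ by $1/(1+R)$'' step has nothing to rescale by. A secondary flaw: the active points $u_j$ along a trajectory are only controlled through $\triple{u_j-\eta}<\mu/16$, which bounds $\|u_j^+-\eta^+\|$ but not $\|u_j^-\|$, so even a trajectory-norm bound would not give a lower bound for $\min_j 1/(1+\|u_j\|)$ in the descent inequality. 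As written, then, the deformation lemma --- and with it the multiplicity conclusion --- is not recovered from (\ref{A9N}): one must either follow the paper and let (\ref{A9N}) play the role of (\ref{A8}) when citing Theorem \ref{T:mainTheorem}, or produce a genuinely different Cerami-type construction (uniform descent with a field of controlled growth, exploiting that (\ref{A6}) bounds the norm near $\cN_8$ and $\cN_{16}$ where the distance estimates are used), which your sketch does not supply.
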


\section*{Acknowledgements}
We would like to thank the Referee for the thorough review of the article and the valuable comments and suggestions.

This work was conducted during Federico Bernini’s affiliation with the University of Milan, whose support he gratefully acknowledges.
Federico Bernini is a member of GNAMPA (INdAM) and is partially supported by GNAMPA project 2024 {\em Problemi spettrali e di ottimizzazione di forma: aspetti qualitativi e stime quantitative}.
Bartosz Bieganowski and Daniel Strzelecki were partly supported by the National Science Centre, Poland (Grant no. 2022/47/D/ST1/00487). This work was partly supported by the Thematic Research Programme “Variational and geometrical methods in partial differential equations”, University of Warsaw, Excellence Initiative - Research University.

\section*{Statements and Declarations}
\textbf{Conflict of interest.} On behalf of all authors, the corresponding author states that there is no conflict of interest.

\textbf{Data availability.} Not applicable.

\bibliographystyle{abbrv}
\bibliography{Bibliography}

\begin{thebibliography}{10}

\bibitem{AmbrRab}
A.~Ambrosetti and P.~H. Rabinowitz.
\newblock Dual variational methods in critical point theory and applications.
\newblock {\em J. Functional Analysis}, 14:349--381, 1973.

\bibitem{BBR}
M.~Badiale, V.~Benci, and S.~Rolando.
\newblock A nonlinear elliptic equation with singular potential and applications to nonlinear field equations.
\newblock {\em J. Eur. Math. Soc. (JEMS)}, 9(3):355--381, 2007.

\bibitem{BT}
M.~Badiale and G.~Tarantello.
\newblock A {S}obolev-{H}ardy inequality with applications to a nonlinear elliptic equation arising in astrophysics.
\newblock {\em Arch. Ration. Mech. Anal.}, 163(4):259--293, 2002.

\bibitem{Bartsch}
T.~Bartsch.
\newblock Infinitely many solutions of a symmetric {D}irichlet problem.
\newblock {\em Nonlinear Anal.}, 20(10):1205--1216, 1993.

\bibitem{BartschDing}
T.~Bartsch and Y.~Ding.
\newblock On a nonlinear {S}chr\"odinger equation with periodic potential.
\newblock {\em Math. Ann.}, 313(1):15--37, 1999.

\bibitem{BDPR}
T.~Bartsch, T.~Dohnal, M.~Plum, and W.~Reichel.
\newblock Ground states of a nonlinear curl-curl problem in cylindrically symmetric media.
\newblock {\em NoDEA Nonlinear Differential Equations Appl.}, 23(5):Art. 52, 34, 2016.

\bibitem{BaMe1}
T.~Bartsch and J.~Mederski.
\newblock Ground and bound state solutions of semilinear time-harmonic {M}axwell equations in a bounded domain.
\newblock {\em Arch. Ration. Mech. Anal.}, 215(1):283--306, 2015.

\bibitem{BC1}
C.~J. Batkam and F.~Colin.
\newblock Generalized fountain theorem and applications to strongly indefinite semilinear problems.
\newblock {\em J. Math. Anal. Appl.}, 405(2):438--452, 2013.

\bibitem{Benci82}
V.~Benci.
\newblock On critical point theory for indefinite functionals in the presence of symmetries.
\newblock {\em Trans. Amer. Math. Soc.}, 274(2):533--572, 1982.

\bibitem{BB}
F.~Bernini and B.~Bieganowski.
\newblock Generalized linking-type theorem with applications to strongly indefinite problems with sign-changing nonlinearities.
\newblock {\em Calc. Var. Partial Differential Equations}, 61(5):Paper No. 182, 23, 2022.

\bibitem{BB_JMAA}
B.~Bieganowski.
\newblock Solutions of the fractional {S}chr\"odinger equation with a sign-changing nonlinearity.
\newblock {\em J. Math. Anal. Appl.}, 450(1):461--479, 2017.

\bibitem{Bi}
B.~Bieganowski.
\newblock Solutions to a nonlinear {M}axwell equation with two competing nonlinearities in {$\mathbb{R}^3$}.
\newblock {\em Bull. Pol. Acad. Sci. Math.}, 69(1):37--60, 2021.

\bibitem{BiegMed}
B.~Bieganowski and J.~Mederski.
\newblock Nonlinear {S}chr\"odinger equations with sum of periodic and vanishing potentials and sign-changing nonlinearities.
\newblock {\em Commun. Pure Appl. Anal.}, 17(1):143--161, 2018.

\bibitem{BiegMed2}
B.~Bieganowski and J.~Mederski.
\newblock Bound states for the {S}chr\"odinger equation with mixed-type nonlinearites.
\newblock {\em Indiana Univ. Math. J.}, 71(1):65--92, 2022.

\bibitem{BMS}
B.~Bieganowski, J.~Mederski, and J.~Schino.
\newblock Normalized solutions to at least mass critical problems: singular polyharmonic equations and related curl-curl problems.
\newblock {\em J. Geom. Anal.}, 34(10):Paper No. 322, 32, 2024.

\bibitem{BURYAK200263}
A.~V. Buryak, P.~D. Trapani, D.~V. Skryabin, and S.~Trillo.
\newblock Optical solitons due to quadratic nonlinearities: from basic physics to futuristic applications.
\newblock {\em Physics Reports}, 370(2):63--235, 2002.

\bibitem{CW}
S.~Chen and C.~Wang.
\newblock An infinite-dimensional linking theorem without upper semi-continuous assumption and its applications.
\newblock {\em J. Math. Anal. Appl.}, 420(2):1552--1567, 2014.

\bibitem{CZR}
V.~Coti~Zelati and P.~H. Rabinowitz.
\newblock Homoclinic orbits for second order {H}amiltonian systems possessing superquadratic potentials.
\newblock {\em J. Amer. Math. Soc.}, 4(4):693--727, 1991.

\bibitem{dePaiva}
F.~O. de~Paiva, W.~Kryszewski, and A.~Szulkin.
\newblock Generalized {N}ehari manifold and semilinear {S}chr\"odinger equation with weak monotonicity condition on the nonlinear term.
\newblock {\em Proc. Amer. Math. Soc.}, 145(11):4783--4794, 2017.

\bibitem{GMS}
M.~Gaczkowski, J.~Mederski, and J.~Schino.
\newblock Multiple solutions to cylindrically symmetric curl-curl problems and related {S}chr\"odinger equations with singular potentials.
\newblock {\em SIAM J. Math. Anal.}, 55(5):4425--4444, 2023.

\bibitem{Gu}
L.-J. Gu and H.-S. Zhou.
\newblock Infinitely many solutions for a {S}chrödinger equation with sign-changing potential and nonlinear term.
\newblock {\em Communications on Pure and Applied Analysis}, 2024.

\bibitem{KS}
W.~Kryszewski and A.~Szulkin.
\newblock Generalized linking theorem with an application to a semilinear {S}chr\"{o}dinger equation.
\newblock {\em Adv. Differential Equations}, 3(3):441--472, 1998.

\bibitem{Kuchment}
P.~Kuchment.
\newblock The mathematics of photonic crystals.
\newblock In {\em Mathematical modeling in optical science}, volume~22 of {\em Frontiers Appl. Math.}, pages 207--272. SIAM, Philadelphia, PA, 2001.

\bibitem{MR2271695}
Y.~Li, Z.-Q. Wang, and J.~Zeng.
\newblock Ground states of nonlinear {S}chr\"odinger equations with potentials.
\newblock {\em Ann. Inst. H. Poincar\'e{} C Anal. Non Lin\'eaire}, 23(6):829--837, 2006.

\bibitem{MR2957647}
S.~Liu.
\newblock On superlinear {S}chr\"odinger equations with periodic potential.
\newblock {\em Calc. Var. Partial Differential Equations}, 45(1-2):1--9, 2012.

\bibitem{MR3494969}
J.~Mederski.
\newblock Solutions to a nonlinear {S}chr\"odinger equation with periodic potential and zero on the boundary of the spectrum.
\newblock {\em Topol. Methods Nonlinear Anal.}, 46(2):755--771, 2015.

\bibitem{MR3551463}
J.~Mederski.
\newblock Ground states of a system of nonlinear {S}chr\"odinger equations with periodic potentials.
\newblock {\em Comm. Partial Differential Equations}, 41(9):1426--1440, 2016.

\bibitem{MR4173560}
J.~Mederski.
\newblock Nonradial solutions of nonlinear scalar field equations.
\newblock {\em Nonlinearity}, 33(12):6349--6380, 2020.

\bibitem{MSS}
J.~Mederski, J.~Schino, and A.~Szulkin.
\newblock Multiple solutions to a nonlinear curl--curl problem in {${\R}^3$}.
\newblock {\em Arch. Ration. Mech. Anal.}, 236(1):253--288, 2020.

\bibitem{Nehari}
Z.~Nehari.
\newblock On a class of nonlinear second-order differential equations.
\newblock {\em Trans. Amer. Math. Soc.}, 95:101--123, 1960.

\bibitem{Nie}
W.~Nie.
\newblock Optical nonlinearity: Phenomena, applications, and materials.
\newblock {\em Advanced Materials}, 5(7-8):520--545, 1993.

\bibitem{Pankov}
A.~Pankov.
\newblock Periodic nonlinear {S}chr\"odinger equation with application to photonic crystals.
\newblock {\em Milan J. Math.}, 73:259--287, 2005.

\bibitem{PohozaevFibering1}
S.~I. Poho\v{z}aev.
\newblock The fibering method and its applications to nonlinear boundary value problem.
\newblock {\em Rend. Istit. Mat. Univ. Trieste}, 31(1-2):235--305, 1999.

\bibitem{PohozaevFibering2}
S.~I. Poho\v{z}aev.
\newblock Nonlinear variational problems via the fibering method.
\newblock In {\em Handbook of differential equations: stationary partial differential equations. {V}ol. {V}}, Handb. Differ. Equ., pages 49--209. Elsevier/North-Holland, Amsterdam, 2008.
\newblock Sections 5 and 6 by Yu.\ Bozhkov and E. Mitidieri.

\bibitem{Rab78}
P.~H. Rabinowitz.
\newblock Some critical point theorems and applications to semilinear elliptic partial differential equations.
\newblock {\em Ann. Scuola Norm. Sup. Pisa Cl. Sci. (4)}, 5(1):215--223, 1978.

\bibitem{Rab86}
P.~H. Rabinowitz.
\newblock {\em Minimax methods in critical point theory with applications to differential equations}, volume~65 of {\em CBMS Regional Conference Series in Mathematics}.
\newblock Conference Board of the Mathematical Sciences, Washington, DC; by the American Mathematical Society, Providence, RI, 1986.

\bibitem{MR1162728}
P.~H. Rabinowitz.
\newblock On a class of nonlinear {S}chr\"odinger equations.
\newblock {\em Z. Angew. Math. Phys.}, 43(2):270--291, 1992.

\bibitem{ReedSimon}
M.~Reed and B.~Simon.
\newblock {\em Methods of Modern Mathematical Physics, Analysis of Operators, Vol. IV}.
\newblock Methods of Modern Mathematical Physics. Elsevier Science, 1978.

\bibitem{Slusher}
R.~E. Slusher and B.~J. Eggleton.
\newblock {\em Nonlinear Photonic Crystals}.
\newblock Springer Series in Photonics. Springer Berlin, Heidelberg, 2003.

\bibitem{Struwe}
M.~Struwe.
\newblock {\em Variational methods}, volume~34 of {\em Ergebnisse der Mathematik und ihrer Grenzgebiete. 3. Folge. A Series of Modern Surveys in Mathematics [Results in Mathematics and Related Areas. 3rd Series. A Series of Modern Surveys in Mathematics]}.
\newblock Springer-Verlag, Berlin, fourth edition, 2008.
\newblock Applications to nonlinear partial differential equations and Hamiltonian systems.

\bibitem{S1}
C.~A. Stuart.
\newblock Self-trapping of an electromagnetic field and bifurcation from the essential spectrum.
\newblock {\em Arch. Rational Mech. Anal.}, 113(1):65--96, 1990.

\bibitem{SzW}
A.~Szulkin and T.~Weth.
\newblock Ground state solutions for some indefinite variational problems.
\newblock {\em J. Funct. Anal.}, 257(12):3802--3822, 2009.

\bibitem{Tintarev2020}
C.~Tintarev.
\newblock {\em Concentration compactness---functional-analytic theory of concentration phenomena}, volume~33 of {\em De Gruyter Series in Nonlinear Analysis and Applications}.
\newblock De Gruyter, Berlin, [2020] \copyright 2020.

\bibitem{Tintarev2007}
K.~Tintarev and K.-H. Fieseler.
\newblock {\em Concentration compactness}.
\newblock Imperial College Press, London, 2007.
\newblock Functional-analytic grounds and applications.

\bibitem{Z}
X.~Zeng.
\newblock Cylindrically symmetric ground state solutions for curl-curl equations with critical exponent.
\newblock {\em Z. Angew. Math. Phys.}, 68(6):Paper No. 135, 12, 2017.

\end{thebibliography}

\end{document}